\DeclareMathOperator{\tr}{tr}
\DeclareMathOperator{\Tr}{Tr}
\DeclareMathOperator{\End}{End}
\DeclareMathOperator{\SO}{SO}
\DeclareMathOperator{\U}{U}
\DeclareMathOperator{\SU}{SU}
\DeclareMathOperator{\M}{M}
\DeclareMathOperator{\GL}{GL}
\DeclareMathOperator{\SL}{SL}
\DeclareMathOperator{\Ad}{Ad}
\DeclareMathOperator{\Real}{Re}
\DeclareMathOperator{\Symp}{Sp}
\DeclareMathOperator{\Sp}{Sp}
\DeclareMathOperator{\diag}{diag}
\DeclareMathOperator{\loc}{loc}
\DeclareMathOperator{\Lie}{Lie}
\DeclareMathOperator{\Co}{Co}
\newcommand{\C}{\mathbb{C}}
\newcommand{\R}{\mathbb{R}}
\newcommand{\N}{\mathbb{N}}
\newcommand{\Z}{\mathbb{Z}}
\newcommand{\KK}{\mathbb{K}}
\newcommand{\so}{\mathfrak{so}}
\newcommand{\Ind}{\mathbbm{1}}
\newcommand{\wX}{\widetilde{X}}
\newcommand{\wY}{\widetilde{Y}}
\newcommand{\p}[1]{\frac{\partial}{\partial #1}}
\newcommand{\pp}[2]{\frac{\partial^2}{\partial #1 \partial #2}}
\newcommand{\inpro}[1]{\left\langle #1 \right\rangle}
\newcommand{\V}{\mathbf{V}}
\newcommand{\bSON}{\beta_{\so(N,\R)}}
\newcommand{\dSON}{d_{\SO(N,\R)}}
\newcommand{\LSON}{\Delta_{\SO(N,\R)}}
\newcommand{\K}{\mathcal{K}}
\newcommand{\wL}{\widetilde{\mathcal{L}}}
\newcommand{\Cuuv}{\C[u,u^{-1};\mathbf{v}]}
\newcommand{\Cmuuv}{\C_m[u,u^{-1};\mathbf{v}]}
\newcommand{\Cuu}{\C[u,u^{-1}]}
\newcommand{\ASONst}{\mathcal{A}_{s,\tau}^{\SO(N,\C)}} 
\newcommand{\BSONst}{\mathbf{B}_{s,\tau}^{\SO(N,\R)}} 
\newcommand{\Gst}{\mathscr{G}_{s,\tau}} 
\newcommand{\Hst}{\mathscr{H}_{s,\tau}} 
\newcommand{\MNC}{\M_N(\C)}
\newcommand{\DNst}{\widetilde{\mathcal{D}}^{s,\tau}_N}
\newcommand{\Lzst}{\widetilde{\mathcal{L}}_0^{s,\tau}}
\newcommand{\LmuSONst}{{L^2(\mu_{s,\tau}^{\SO(N,\C)})}}
\newcommand{\exptDN}{e^{\frac{\tau}{2}\mathcal{D}_N^{(1)}}}
\newcommand{\exptLz}{e^{\frac{\tau}{2}\mathcal{L}_0}}
\newcommand{\RN}{R^{(N)}}
\newcommand{\dP}{\frac{\partial P}{\partial v_\varepsilon}}
\newcommand{\ddP}{\frac{\partial^2 P}{\partial v_\varepsilon\partial v_\delta}}
\newcommand{\ejkzero}{{\varepsilon_{j,k}^0}}
\newcommand{\ejkone}{{\varepsilon_{j,k}^1}}
\newcommand{\ejktwo}{{\varepsilon_{j,k}^2}}
\newcommand{\ejkthree}{{\varepsilon_{j,k}^3}}
\newcommand{\symp}{\mathfrak{sp}}
\newcommand{\LSpN}{\Delta_{\Symp(N)}}
\newcommand{\HH}{\mathbb{H}}
\newcommand{\idH}{\mathbf{1}}
\newcommand{\ii}{\mathbf{i}}
\newcommand{\jj}{\mathbf{j}}
\newcommand{\kk}{\mathbf{k}}
\newcommand{\bSpN}{\beta_{\symp(N)}}
\newcommand{\ASpst}{\mathcal{A}_{s,\tau}^{\Sp(N,\C)}}
\newcommand{\wSpN}{\widetilde{\Sp}(N)}
\newcommand{\wsympN}{\widetilde{\symp}(N)}
\newcommand{\wtr}{\widetilde{\tr}}
\newcommand{\LieK}{\mathfrak{k}}
\newcommand{\su}{\mathfrak{su}}
\newcommand{\uu}{\mathfrak{u}}
\newcommand{\Fabg}{F_{a,b}^\gamma}
\newcommand{\FabidH}{F_{a,b}^{\idH}}
\newcommand{\Fab}{F_{a,b}}
\newcommand{\Fbag}{F_{b,a}^\gamma}
\newcommand{\FbaidH}{F_{b,a}^{\idH}}
\newcommand{\DNbeta}{\mathcal{D}_N^{(\beta)}}
\newcommand{\Lzerobeta}{\mathcal{L}_0}
\newcommand{\Lonebeta}{\mathcal{L}_1^{(\beta)}}
\newcommand{\Ltwobeta}{\mathcal{L}_2^{(\beta)}}
\newcommand{\rhosNbeta}{\rho_s^{N,(\beta)}}
\newcommand{\mustNbeta}{\mu_{s,\tau}^{N,(\beta)}}
\newcommand{\LL}{{\wL_0^{s,\tau}+\frac{1}{N}\wL_1^{s,\tau}}}
\theoremstyle{plain}
\newtheorem{theorem}{Theorem}[section]
\newtheorem*{theorem*}{Theorem}
\newtheorem{lemma}[theorem]{Lemma}
\newtheorem*{lemma*}{Lemma}
\newtheorem{prop}[theorem]{Proposition}
\newtheorem*{prop*}{Proposition}
\newtheorem{cor}[theorem]{Corollary}
\newtheorem*{cor*}{Corollary}
\newtheorem*{claim*}{Claim}
\theoremstyle{definition}
\newtheorem{definition}[theorem]{Definition}
\newtheorem*{definition*}{Definition}
\newtheorem{remark}[theorem]{Remark}
\newtheorem*{remark*}{Remark}
\newtheorem{example}[theorem]{Example}
\newtheorem*{example*}{Example}
\newtheorem*{exer*}{Exercise}
\newtheorem{notation}[theorem]{Notation}
\def\thm@space@setup{%
	\thm@preskip=\parskip \thm@postskip=0pt
}
\numberwithin{equation}{section}
\journal{Journal of Functional Analysis}
\begin{document}	

\begin{frontmatter}
	
	\title{The Segal-Bargmann Transform on Classical Matrix Lie Groups}
	
	\author[1]{Alice Z. Chan\footnote{Supported by NSF GRFP under Grant No. DGE-1650112}}
	\ead{azchan@ucsd.edu}

	\address[1]{Department of Mathematics,  University of California, San Diego,  La Jolla, CA 92093}
	
	\begin{abstract}
		We study the complex-time Segal-Bargmann transform $\mathbf{B}_{s,\tau}^{K_N}$ on a compact type Lie group $K_N$, where $K_N$ is one of the following classical matrix Lie groups: the special orthogonal group $\SO(N,\R)$, the special unitary group $\SU(N)$, or the compact symplectic group $\Sp(N)$. Our work complements and extends the results of Driver, Hall, and Kemp on the Segal-Bargman transform for the unitary group $\U(N)$. We provide an effective method of computing the action of the Segal-Bargmann transform on \emph{trace polynomials}, which comprise a subspace of smooth functions on $K_N$ extending the polynomial functional calculus. Using these results, we show that as $N\to\infty$, the finite-dimensional transform $\mathbf{B}_{s,\tau}^{K_N}$ has a meaningful limit $\mathscr{G}_{s,\tau}$ which can be identified as an operator on the space of complex Laurent polynomials.
		
		\noindent\emph{Keywords:} Segal-Bargmann transform; Heat kernel analysis on Lie groups
	\end{abstract}
	
\end{frontmatter}

\tableofcontents

\section{Introduction}

\subsection{The classical Segal-Bargmann transform} \label{sec:SBclassical}
In this paper, we consider a complex-time generalization of the classical Segal-Bargmann transform on the matrix Lie groups $\SO(N,\R)$, $\SU(N)$, and $\Sp(N)$, and analyze its limit as $N\to\infty$. We begin with a brief discussion of the classical Segal-Bargmann transform and its generalizations to Lie groups of compact type. For $t>0$, let $\rho_t$ denote the heat kernel with variance $t$ on $\R^d$: $$\rho_t(x)=(2\pi t)^{-d/2}\exp(-|x|^2/2t).$$ This has an entire analytic continuation to $\C^d$, given by $$(\rho_t)_{\C}(z)=(2\pi t)^{-d/2}\exp\left(-\frac{z\cdot z}{2t}\right).$$ If $f\in L^1_{\loc}(\R^d)$ is of sufficiently slow growth, we can define 
\begin{equation}
	(B_tf)(z):=\int_{\R^d}(\rho_t)_\C (z-y)f(y)\, dy. \label{eq:SBclassical}
\end{equation}
The map $f\mapsto B_tf$ is called the classical {\bf Segal-Bargmann transform}, due to the work of the eponymous authors in \cite{Bargmann:Hilbertremarks, Bargmann:Hilbertspace, Segal:complexwave, Segal:mathchar, Segal:mathproblems}. If we let $\mu_t$ denote the heat kernel on $\C^d$, now with variance $\frac{t}{2}$: 
\begin{equation}\label{eq:heatkernelCd}
	\mu_t(z)=(\pi t)^{-d}\exp(-|z|^2/t),
\end{equation}
then the main result is that the map $$B_t:L^2(\R^d,\rho_t)\to\mathcal{H}L^2(\C^d,\mu_t)$$ is a unitary isomorphism, where $\mathcal{H}L^2(\C^d,\mu_t)$ is the space of square-integrable holomorphic functions on $\C^d$.

\subsection{Main definitions and theorems}

In \cite{Hall:SB}, Hall showed that the Segal-Bargmann transform can be extended to compact Lie groups, and in \cite{D:SBH}, the transform was further extended by Driver to Lie groups of compact type. Later, the behavior of the transform on the unitary group $\U(N)$ as $N\to\infty$ was analyzed independently by C\'{e}bron in \cite{Ceb:transform} and Driver, Hall, and Kemp in \cite{DHK:U(N)}. The authors of the latter paper subsequently introduced a complex-time generalization of the Segal-Bargmann transform for all compact type Lie groups in \cite{DHK:complextimeSB}. We use this version of the transform (see Definition \ref{def:boostedSB}) in our analysis of the special orthogonal group $\SO(N,\R)$, the special unitary group $\SU(N)$, and the compact symplectic group $\Sp(N)$. We now outline the main results of this paper, deferring a fuller discussion of the requisite background and definitions to Section \ref{sec:background}. 

Let $K$ denote a Lie group of compact type with Lie algebra $\mathfrak{k}$. Then $K$ possesses a left-invariant metric whose associated Laplacian $\Delta_K$ is bi-invariant (cf. \eqref{eq:Laplace}). For each $s>0$, there is an associated heat kernel $\rho_s^K\in C^\infty (K,(0,\infty))$ satisfying
\begin{equation} \label{eq:heatkernel}
	\left(e^{\frac{s}{2}\Delta_K}f\right)(x)=\int_K f(xk)\rho_s^K(k)\ dk=\int_K f(k)\rho_s^K(xk^{-1})\ dk\hspace{.5cm}\mbox{for all $f\in L^2(K)$}.
\end{equation}

Let $K_\C$ denote the complexification of $K$ and $\C_+=\{\tau=t+iu\, :\, t>0,u\in\R\}$ denote the right half-plane. As shown in \cite[Theorem 1.2]{DHK:complextimeSB}, the heat kernel $(\rho_s^K)_{s>0}$ possesses a unique entire analytic continuation $$\rho_{\C}^K:\C_+\times K_\C\to\C$$ such that $\rho_{\C}^K(s,x)=\rho_s(x)$ for all $s>0$ and $x\in K\subseteq K_\C$. Thus we can proceed as in the classical case and define the Segal-Bargmann transform for compact type Lie groups using a group convolution formula similar to \eqref{eq:SBclassical} (see \cite{D:SBH} for details). 

\begin{definition}[Complex-time Segal-Bargmann transform] \label{def:boostedSB}
	Let $s>0$ and $\tau = t+i\theta\in\mathbb{D}(s,s)$, and let $\rho_s^K$ and $\rho_{\C}^K$ be as above. For $z\in K_\C$ and $f\in L^2(K,\rho_s^K)$, define
	\begin{equation}
	(B_{s,\tau}^K f)(z):=\int_K\rho_\C^K(\tau,zk^{-1})f(k)\, dk\hspace{.5cm}\mbox{for }z\in K_\C.
	\end{equation}
\end{definition}

The complex-time Segal-Bargmann transform $B_{s,\tau}^K$ is an isometric isomorphism between certain $L^2$ and holomorphic $L^2$ spaces on $K$ and $K_\C$:

\begin{theorem}[Driver, Hall, and Kemp, {\cite[Theorem 1.6]{DHK:complextimeSB}}]
	Let $K$ be a connected, compact-type Lie group, and set $s>0$ and $\tau\in\mathbb{D}(s,s)$. Let $\mu_{s,\tau}^{K_\C}$ denote the (time-rescaled) heat kernel on $K_\C$ defined by \eqref{eq:heatKC}. Then $B_{s,\tau}^K f$ is holomorphic on $K_\C$ for each $f\in L^2(K,\rho_s^K)$. Moreover,
	\begin{equation}
	B_{s,\tau}^K:L^2(K,\rho_s^K)\to\mathcal{H}L^2(K_\C,\mu_{s,\tau}^{K_\C})
	\end{equation}
	is a unitary isomorphism. 
\end{theorem}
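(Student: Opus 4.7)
\emph{Plan.} I would reduce the theorem to a Peter--Weyl computation on matrix coefficients, where $B_{s,\tau}^K$ acts as a scalar multiplier, and then extend by density and continuity. Three items need checking: holomorphy of $B_{s,\tau}^K f$ on $K_\C$, the isometric property, and surjectivity onto $\mathcal{H}L^2(K_\C,\mu_{s,\tau}^{K_\C})$. For holomorphy, I would differentiate under the integral: Gaussian upper bounds on $|\rho_\C^K(\tau,zk^{-1})|$, uniform for $z$ in a compact subset of $K_\C$, combined with a Cauchy--Schwarz bound against $f\in L^2(K,\rho_s^K)$, justify the interchange and show that $\bar\partial (B_{s,\tau}^K f)=0$.

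For the isometry, by the Peter--Weyl theorem a dense family in $L^2(K,\rho_s^K)$ consists of the matrix coefficients $\pi_{ij}$ of the irreducible unitary representations of $K$, each an eigenfunction of the Casimir: $\Delta_K\pi_{ij}=-c_\pi\pi_{ij}$. Viewing $B_{s,\tau}^K$ as the analytic continuation of $e^{(\tau/2)\Delta_K}$ first in time and then in space yields
\[ B_{s,\tau}^K \pi_{ij} \;=\; e^{-\tau c_\pi/2}\,\pi^\C_{ij}, \]
where $\pi^\C_{ij}$ is the holomorphic extension of $\pi_{ij}$ to $K_\C$. Unitarity of $B_{s,\tau}^K$ then amounts to matching the Gram matrices of the matrix coefficients under $\rho_s^K\,dk$ and of the holomorphic matrix coefficients under $\mu_{s,\tau}^{K_\C}$, up to the phases $e^{-\overline{\tau}c_\pi/2 - \tau c_\sigma/2}$ coming from the multipliers. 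Surjectivity then follows by showing that the holomorphic matrix coefficients are total in $\mathcal{H}L^2(K_\C,\mu_{s,\tau}^{K_\C})$; since $K_\C$ is Stein and these coefficients are dense in $\mathcal{O}(K_\C)$ with respect to compact convergence, a standard reproducing-kernel density argument closes the proof.

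The main obstacle is the matching of Gram matrices, which is a nontrivial computation that effectively pins down the measure $\mu_{s,\tau}^{K_\C}$. For real $\tau=s$ this is Hall's classical calculation, carried out via the polar decomposition $K_\C=K\cdot\exp(i\mathfrak{k})$ and the product structure of the heat kernel on $K_\C$. For complex $\tau\in\mathbb{D}(s,s)$, however, $\mu_{s,\tau}^{K_\C}$ is a skewed Gaussian rather than a pure heat kernel, so my strategy would be first to verify the identity for real $\tau$ and then to analytically continue both sides as functions of $\tau$ on the disk $\mathbb{D}(s,s)$, concluding via the identity theorem.
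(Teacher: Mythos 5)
This theorem is stated in the paper only as a citation to Driver--Hall--Kemp's complex-time Segal--Bargmann paper; the present paper does not prove it, so there is no internal proof to compare against. Evaluating your sketch on its own terms, the overall architecture (holomorphy by differentiating under the integral, isometry via Peter--Weyl/spectral multipliers, surjectivity via density) is the classical Hall strategy and is reasonable, but there are genuine gaps.

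The most serious is the analytic continuation step. The quantity
\begin{equation*}
\tau \;\longmapsto\; \|B_{s,\tau}^K f\|_{L^2(K_\C,\mu_{s,\tau}^{K_\C})}^2
\;=\;\int_{K_\C} B_{s,\tau}^K f(z)\,\overline{B_{s,\tau}^K f(z)}\,\mu_{s,\tau}^{K_\C}(z)\,dz
\end{equation*}
is real-analytic in $\tau=t+i\theta$ but emphatically \emph{not} holomorphic in $\tau$: the integrand depends on $\bar\tau$ through the conjugate factor and through $\mu_{s,\tau}^{K_\C}$, which itself depends on $t$ and $\theta$ separately. A real-analytic function of $\tau$ that agrees with $\|f\|^2$ on the real segment of $\mathbb{D}(s,s)$ need not agree off the real axis (e.g.\ $\theta^2$ vanishes on $\R$), so the identity theorem, as you invoke it, gives you nothing. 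One would have to either (a) polarize in two independent complex time parameters and prove equality on a genuinely open subset of $\C^2$, or (b) prove the required heat-kernel convolution identity directly for complex $\tau$ rather than inferring it from the real case. Driver--Hall--Kemp take essentially route (b): they establish the isometry through explicit identities for $\rho_\C^K$ and $\mu_{s,\tau}^{K_\C}$ rather than by continuing a real-time result.

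Two secondary points. First, the theorem is for \emph{compact-type} $K$, which may have an $\R^d$ factor; the Peter--Weyl decomposition is then a direct integral, not a direct sum of finite-dimensional blocks, and the Gram-matrix bookkeeping you describe must be reformulated accordingly (or one must first reduce via the splitting $K\cong K_0\times\R^d$ and treat the Euclidean factor separately). Second, for surjectivity, density of the holomorphic matrix coefficients in $\mathcal{O}(K_\C)$ for the topology of compact convergence does not by itself yield density in $\mathcal{H}L^2(K_\C,\mu_{s,\tau}^{K_\C})$; one needs $L^2$-estimates (e.g.\ a reproducing-kernel argument plus growth bounds on $\mu_{s,\tau}^{K_\C}$) to pass from locally uniform approximation to $L^2$ approximation. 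As written, that step is asserted rather than proved.
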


If we set $K=\R^d$ and $s=\tau >0$, we recover the main result for the classical Segal-Bargmann transform from Section \ref{sec:SBclassical}. 

Every compact Lie group $K$ has a faithful representation. Hence, viewing $K$ as a matrix group, we can extend $B_{s,\tau}^K$ to matrix-valued functions. Let $\M_N(\R)$ denote the algebra of $N\times N$ real matrices with unit $I_N$, and let $\M_N(\C)$ denote the algebra of $N\times N$ complex matrices. Let $\GL_N(\C)$ denote the group of all invertible matrices in $\M_N(\C)$. Define a scaled Hilbert-Schmidt norm on $\MNC$ by
\begin{equation} \label{eq:MNCnorm}
	\|A\|^2_{\MNC}:=\frac{1}{N}\Tr(AA^*),\hspace{.5cm}A\in\MNC,
\end{equation}
where $\Tr$ denotes the usual trace. 

\begin{definition}[Boosted Segal-Bargmann transform]
	Let $K\subseteq\GL_N(\C)$ be a compact matrix Lie group with complexification $K_\C$. Let $F$ be an $\MNC$ valued function on $K$ or $K_\C$, and let $\|F\|_{\MNC}$ denote the scalar-valued function $A\mapsto\|F(A)\|_{\MNC}$. Fix $s>0$ and $\tau\in\mathbb{D}(s,s)$, and let
	\begin{align*}
	L^2(K,\rho_s^K;\MNC)&=\{F:K\to\MNC;\,\|F\|_{\MNC}\in L^2(K,\rho_s^K)\},\hspace{.5cm}\mbox{and}\\
	L^2(K_\C,\mu_{s,\tau}^{K_\C};\MNC)&=\{F:K_\C\to\MNC;\,\|F\|_{\MNC}\in L^2(K_\C,\mu_{s,\tau}^{K_\C})\}.
	\end{align*}
	
	The norms defined by
	\begin{align}
	\|F\|^2_{L^2(K,\rho_s^K;\MNC)}&:=\int_{K}\|F(A)\|^2_{\MNC}\,\rho_s^K(A)\, dA\\
	\|F\|^2_{L^2(K_\C,\mu_{s,\tau}^{K_\C};\MNC)}&:=\int_{K_\C}\|F(A)\|^2_{\MNC}\,\mu_{s,\tau}^{K_\C}(A)\, dA \label{eq:LtmuNstnorm}
	\end{align}
	make these spaces into Hilbert spaces. Let $\mathcal{H}L^2(K_\C,\mu_{s,\tau}^{K_\C};\MNC)\subseteq L^2(K_\C,\mu_{s,\tau}^{K_\C};\MNC)$ denote the subspace of matrix-valued holomorphic functions. 
	
	The {\bf boosted Segal-Bargmann transform} $$\mathbf{B}_{s,\tau}^K:L^2(K,\rho_s^K;\MNC)\to \mathcal{H}L^2(K_\C,\mu_{s,\tau}^{K_\C};\MNC)$$
	is the unitary isomorphism determined by applying $B_{s,\tau}^N$ componentwise; i.e., $$\mathbf{B}_{s,\tau}^K(f\cdot A)=B_{s,\tau}^Kf\cdot A\hspace{.5cm}\mbox{for }f\in L^2(K,\rho_s^K)\mbox{ and }A\in\MNC.$$
\end{definition}

In this paper, we consider the case when $K=K_N$ is one of $\SO(N,\R)$, $\SU(N)$, or $\Sp(N)$. We provide an effective method of computing $\mathbf{B}_{s,\tau}^{K_N}$ on the space of polynomials, and show that in this setting, the transform has a meaningful limit as $N\to\infty$. However, the boosted Segal-Bargmann transform typically does not preserve the space of polynomials on $K$. For example, consider the polynomial $P_N(A)=A^2$ on $\SO(N,\R)$. Let $\Tr$ denote the usual trace and set $\tr = \frac{1}{N}\Tr$. From Example \ref{ex:BSONcomputation}, we see that
\begin{equation} \label{eq:tracepolyex}
(\mathbf{B}_{s,t}^{\SO(N,\R)}P_N)(A)=\frac{1}{N}(1-e^{-t})I_N+\frac{1}{2}e^{-t}(1+e^{\frac{2t}{N}})A^2-\frac{N}{2}e^{-t}(-1+e^{\frac{2t}{N}})A\tr A,
\end{equation}
This example highlights that in general, $\mathbf{B}_{s,\tau}^K$ maps the space of polynomial functions on $K$ into the larger space of {\bf trace polynomials}, first introduced in \cite[Definition 1.7]{DHK:U(N)}, which we now briefly describe (see Section \ref{sec:intertwineSON} for the full details).

Let $\Cuuv$ denote the algebra of polynomials in the commuting variables $u$, $u^{-1}$, and $\mathbf{v}=\{v_{\pm 1},v_{\pm 2},\hdots \}$. For each $P\in\Cuuv$, there is an associated trace polynomial $P_N^{(1)}$ on $\SO(N,\R)$. For now, it is enough to think of $P_N^{(1)}(A)$ as a certain ``polynomial'' function of $A$ and $\tr(A)$. If $P\in\Cuuv$ is a function of the variable $u$ alone, then $P_N^{(1)}$ is simply the corresponding polynomial on $\SO(N,\R)$. For example, the trace polynomial on $\SO(N,\R)$ associated with $P(u;\mathbf{v})=u^5$ is $P_N^{(1)}(A)=A^5$. On the other hand, \eqref{eq:tracepolyex} is the trace polynomial on $\SO(N,\R)$ associated with $$f(u;\mathbf{v})=\frac{1}{N}(1-e^{-t})+\frac{1}{2}e^{-t}(1+e^{\frac{2t}{N}})u^2-\frac{N}{2}e^{-t}(-1+e^{\frac{2t}{N}})uv_1\in\Cuuv .$$ In addition, for $P\in\Cuuv$, there are analogous trace polynomials $P_N^{(2)}$ and $P_N^{(4)}$ on $\SU(N)$, and $\Sp(N)$, resp. (see Definition \ref{def:tracepolys} for precise definitions). 

To simplify the statement of our results, we introduce the following notation: for $N\in\N$, let
\begin{equation}
K_N^{(1)}=\SO(N,\R),\hspace{.5cm}K_N^{(2)}=\SU(N),\hspace{.5cm}K_N^{(2')}=\U(N),\hspace{.5cm}K_N^{(4)}=\Sp(N), 
\end{equation}
and set $K_{N,\C}^{(\beta)}=(K_N^{(\beta)})_\C$ and $\Delta_N^{(\beta)}=\Delta_{K_N^{(\beta)}}$ for $\beta=1,2,2',4$. In addition, we write $\rhosNbeta=\rho_s^{K_N^{(\beta)}}$ and $\mustNbeta=\mu_{s,\tau}^{K_{N,\C}^{(\beta)}}$ for the heat kernels on $K_N^{(\beta)}$ and $K_{N,\C}^{(\beta)}$, respectively.

Our first theorem is an intertwining result showing that $\Delta_N^{(\beta)} P_N^{(\beta)}$ can be computed by applying a certain pseudodifferential operator on $\Cuuv$ to $P$ and then considering the corresponding trace polynomial. 

\begin{theorem}[Intertwining formulas for $\Delta_{\SO(N,\R)}$, $\Delta_{\SU(N)}$, and $\Delta_{\Sp(N)}$] \label{thm:intertwine1}
	For $\beta\in\{1,2,4\}$ and $N\in\N$, there are pseudodifferential operators $\mathcal{D}_N^{(\beta)}$ acting on $\Cuuv$ (see Definition \ref{def:Doperators}) such that, for each $P\in\Cuuv$, 
	\begin{equation} \label{eq:intertwine1}
	\Delta_N^{(\beta)}P_N^{(\beta)}=[\mathcal{D}_N^{(\beta)}P]_N. 
	\end{equation}
	
	Moreover, for all $\tau\in\C$ and $P\in\Cuuv$,
	\begin{equation} \label{eq:intertwine1exp}
	e^{\frac{\tau}{2}\Delta_N^{(\beta)}}P_N^{(\beta)}=[e^{\frac{\tau}{2}\mathcal{D}_N^{(\beta)}}P]_N
	\end{equation}
	For each such $\beta$, $\DNbeta$ takes the form 
	\begin{equation}
	\DNbeta=\Lzerobeta +\frac{1}{N}\Lonebeta+\frac{1}{N^2}\Ltwobeta,
	\end{equation}
	where $\Lzerobeta,\Lonebeta$ are first order pseudodifferential operators and $\Ltwobeta$ is a second order differential operator, all independent of $N$. 
\end{theorem}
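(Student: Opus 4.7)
My plan is to treat each Laplacian $\Delta_N^{(\beta)}$ as the sum of squares $\sum_i X_i^2$ of left-invariant vector fields associated with an orthonormal basis $\{X_i\}$ of $\mathfrak{k}_N^{(\beta)}$ with respect to the appropriately rescaled bi-invariant inner product, and to compute its action on trace polynomials using a \emph{magic formula} for each of the three Lie algebras. The magic formula is an identity of the shape
\begin{equation*}
\sum_i (X_i)_{ab}(X_i)_{cd} = \alpha^{(\beta)}\,\delta_{ad}\delta_{bc} + \beta^{(\beta)}\,\delta_{ab}\delta_{cd} + \gamma^{(\beta)}\,\delta_{ac}\delta_{bd},
\end{equation*}
whose coefficients scale with explicit powers of $N^{-1}$. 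For $\su(N)$ only the first two terms appear; for $\so(N,\R)$ and $\symp(N)$ the third term produces a transposition, which is exactly what forces the target algebra $\Cuuv$ to carry the negative-index variables $v_{-k}$ and the inverse $u^{-1}$ (via $A^T = A^{-1}$ on $\SO(N,\R)$ and the analogous quaternionic relation on $\Sp(N)$).

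The first step is to compute $\Delta_N^{(\beta)}$ on the generating trace polynomials $A^m$ and $\tr(A^m)$ for each $\beta$. Writing each $X_i$ as a derivation that inserts $X_i$ at every occurrence of $A$, squaring, summing and applying the magic formula produces three distinct families of terms: those independent of $N$ contribute to a first-order operator $\Lzerobeta$; those carrying an explicit $1/N$ factor, arising from a contraction that produces a scalar $\tr(A^j)$ in place of a matrix segment, contribute to $\frac{1}{N}\Lonebeta$; and the remaining ``cross'' contributions from applying $X_i$ once to each factor of a product, organized by the Leibniz formula $\Delta(FG)=(\Delta F)G + F(\Delta G) + 2\sum_i (X_iF)(X_iG)$, contribute to $\frac{1}{N^2}\Ltwobeta$. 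An induction on the number of factors of a general trace polynomial then extends these elementary computations to all of $\Cuuv$ and assembles the coefficients into a single pseudodifferential operator $\DNbeta = \Lzerobeta + \frac{1}{N}\Lonebeta + \frac{1}{N^2}\Ltwobeta$, in which each summand is $N$-independent by construction.

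The exponential identity (\ref{eq:intertwine1exp}) then follows by a standard finite-dimensional argument: $\Delta_N^{(\beta)}$ preserves the subspace of trace polynomials of bounded total degree, and $\DNbeta$ likewise preserves $\Cmuuv$, so iterating the intertwining relation on these invariant subspaces gives $(\Delta_N^{(\beta)})^n P_N^{(\beta)} = [(\DNbeta)^n P]_N$ for every $n \geq 0$, after which the exponential series may be summed term by term because the sum takes place in a finite-dimensional space.

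The main obstacle will be the combinatorial bookkeeping in the orthogonal and symplectic cases: their magic formulas contain a transposition term that produces $A^T$ (respectively quaternion-conjugate) factors, which must be rewritten via the defining group relations as $A^{-1}$ and then re-expanded in the $(u^{-1}, v_{-k})$ variables. Verifying that the resulting coefficients organize cleanly into a single $N$-independent pseudodifferential operator, and in particular that $\Ltwobeta$ is a genuine second-order \emph{differential} operator rather than merely pseudodifferential, is the technical heart of the argument; the $\su$ case, where the transposition term is absent, is correspondingly simpler and can serve as a template for the harder two cases.
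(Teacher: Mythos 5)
Your plan is the same as the paper's: magic formulas for $\so(N,\R)$, $\su(N)$, and $\symp(N)$ (you state them in coordinate form $\sum_i(X_i)_{ab}(X_i)_{cd}$, the paper in the equivalent operator form $\sum_X X^2$, $\sum_X XAX$, $\sum_X\tr(XA)X$, $\sum_X\tr(XA)\tr(XB)$), the Leibniz decomposition $\Delta(FG)=(\Delta F)G+F(\Delta G)+2\sum_i(X_iF)(X_iG)$ applied to a trace monomial $P_N=A^m q(\mathbf{V})$, rewriting transpositions via $A^\intercal=A^{-1}$ on $\SO$ (and the $\Omega$-relation on $\Sp$), and the finite-dimensional invariance argument on $\Cmuuv$ for the exponential (the paper's Corollary \ref{cor:DNinvariant}).

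However, the bookkeeping in your second paragraph misassigns the powers of $N$. You claim the $O(1/N)$ contributions to $\mathcal{L}_1^{(\beta)}$ arise from ``a contraction that produces a scalar $\tr(A^j)$''; in the $\tr$-normalization forced by the trace polynomial calculus ($v_k$ is substituted by $\tr(A^k)$, not $\Tr(A^k)$), it is the opposite. In the $\so(N,\R)$ magic formula $\sum_X XA^jX=\frac{1}{N}A^{-j}-\tr(A^j)I_N$, the trace-producing term $-\tr(A^j)I_N$ has coefficient $O(1)$ and feeds $\mathcal{L}_0$ (via $\mathcal{Y}_1,\mathcal{Y}_2$), whereas the $O(1/N)$ contribution to $\mathcal{L}_1^{(1)}$ (via $\mathcal{Z}_1,\mathcal{Z}_2$) comes from the \emph{transposition} term $\frac{1}{N}A^{-j}$. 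This is the correct diagnosis precisely because it explains $\mathcal{L}_1^{(2)}=\mathcal{L}_1^{(2')}=0$: the $\uu$ and $\su$ magic formulas have no transposition term. In addition, for $\SU(N)$ the operator $\mathcal{L}_2^{(2)}$ is not solely a cross-term contribution: the extra $\frac{1}{N^2}\delta_{ab}\delta_{cd}$ piece your coordinate magic formula carries for $\su(N)$ (relative to $\uu(N)$) contributes the operator $\mathcal{J}$ to $\mathcal{L}_2^{(2)}$. The strategy is sound, and carrying it out carefully would land you at the paper's result, but the description as written would steer the actual computation into the wrong operators.
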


The exponential $e^{\frac{\tau}{2}\mathcal{D}_N^{(\beta)}}$ is well defined on $\Cuuv$ (see Corollary \ref{cor:DNinvariant}). The proof of Theorem \ref{thm:intertwine1} for $\SO(N,\R)$  and $\Sp(N)$ can be found on p. \pageref{pf:thm:intertwine1} and p. \pageref{pf:thm:intertwine1SpN}, resp. We do not provide a full proof of the $\SU(N)$ version of the theorem, as it is very similar to the $\SO(N,\R)$ and $\Sp(N)$ cases; the proof is discussed on p. \pageref{pf:thm:intertwine1SUN}. 

Our next result shows that we can use $e^{\frac{\tau}{2}\mathcal{D}_N^{(\beta)}}$ to explicitly compute the Segal-Bargmann transform on trace polynomials.

\begin{theorem}\label{thm:SBintertwine} 
	Let $P\in\Cuuv$, $N\in\N$, $s>0$ and $\tau\in\mathbb{D}(s,s)$. For $\beta\in\{1,2,4\}$, the Segal-Bargmann transform on the trace polynomial $P_N^{(\beta)}$ can be computed as
	\begin{equation}
	\mathbf{B}_{s,\tau}^{K_N^{(\beta)}} P_N^{(\beta)}=[e^{\frac{\tau}{2}\mathcal{D}_N^{(\beta)}}P]_N.
	\end{equation}
\end{theorem}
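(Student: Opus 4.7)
The plan is to reduce Theorem \ref{thm:SBintertwine} to the intertwining identity \eqref{eq:intertwine1exp} of Theorem \ref{thm:intertwine1} in two steps: first establish the formula for real $\tau = t > 0$ by identifying the Segal-Bargmann transform with the holomorphic extension of the heat semigroup, then extend to all $\tau \in \mathbb{D}(s,s)$ by analytic continuation in $\tau$.

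For $\tau = t > 0$ real, I would compare the integral formula in Definition \ref{def:boostedSB} with the heat kernel convolution \eqref{eq:heatkernel}: since $\rho_\C^K(t,\cdot) = \rho_t^{N,(\beta)}(\cdot)$ on $K_N^{(\beta)}$, we obtain $(B_{s,t}^{K_N^{(\beta)}} f)(x) = (e^{\frac{t}{2}\Delta_N^{(\beta)}} f)(x)$ for all $x \in K_N^{(\beta)} \subset K_{N,\C}^{(\beta)}$. Applying this entrywise to the matrix-valued trace polynomial $P_N^{(\beta)}$, and using that $\mathbf{B}_{s,t}^{K_N^{(\beta)}} P_N^{(\beta)}$ is by definition holomorphic on $K_{N,\C}^{(\beta)}$, I conclude that $\mathbf{B}_{s,t}^{K_N^{(\beta)}} P_N^{(\beta)}$ is the unique holomorphic extension of $e^{\frac{t}{2}\Delta_N^{(\beta)}} P_N^{(\beta)}|_{K_N^{(\beta)}}$. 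By Theorem \ref{thm:intertwine1} this restriction equals $[e^{\frac{t}{2}\mathcal{D}_N^{(\beta)}} P]_N$. Now any trace polynomial $[Q]_N$ is, by construction, a polynomial in the entries of $A$, of $A^{-1}$, and in $\tr(A^{\pm k})$, all of which are holomorphic on the connected complex group $K_{N,\C}^{(\beta)}$; thus $[Q]_N$ is already its own holomorphic extension. Uniqueness of holomorphic extensions from the totally real submanifold $K_N^{(\beta)}$ then yields $\mathbf{B}_{s,t}^{K_N^{(\beta)}} P_N^{(\beta)} = [e^{\frac{t}{2}\mathcal{D}_N^{(\beta)}} P]_N$ on all of $K_{N,\C}^{(\beta)}$ for every $t > 0$.

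To finish, I would extend to $\tau \in \mathbb{D}(s,s)$ via the identity theorem. For fixed $z \in K_{N,\C}^{(\beta)}$, the left-hand side $(\mathbf{B}_{s,\tau}^{K_N^{(\beta)}} P_N^{(\beta)})(z)$ is holomorphic in $\tau$ on $\C_+ \supset \mathbb{D}(s,s)$ by differentiation under the integral, using the joint holomorphicity of $\rho_\C^K$ on $\C_+ \times K_{N,\C}^{(\beta)}$ together with compactness of $K_N^{(\beta)}$. The right-hand side is holomorphic in $\tau$ on all of $\C$ because $e^{\frac{\tau}{2}\mathcal{D}_N^{(\beta)}} P$ is a well-defined, holomorphically $\tau$-dependent element of $\Cuuv$, as guaranteed by Theorem \ref{thm:intertwine1} and Corollary \ref{cor:DNinvariant}. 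Since the two sides agree on the real interval $\mathbb{D}(s,s) \cap (0, 2s)$, they must agree on the connected domain $\mathbb{D}(s,s)$. The main obstacle in this plan is the bookkeeping around the trace-polynomial/matrix-valued formalism, namely verifying that forming a trace polynomial commutes with holomorphic extension from $K_N^{(\beta)}$ to $K_{N,\C}^{(\beta)}$; this is dispatched by the observation above that $[Q]_N$ is intrinsically a holomorphic function on $K_{N,\C}^{(\beta)}$, so no new analytic input is needed beyond Theorem \ref{thm:intertwine1}.
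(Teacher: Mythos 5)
Your proof is correct, and it takes a genuinely different route from the paper's. The paper's proof is very short: it invokes the intertwining formula $e^{\frac{\tau}{2}\Delta_N^{(\beta)}}P_N^{(\beta)}=[e^{\frac{\tau}{2}\mathcal{D}_N^{(\beta)}}P]_N$, observes that the right-hand side is already a holomorphic trace polynomial on $K_{N,\C}^{(\beta)}$, and then cites as a black box (Remark \ref{rem:SBpowerseries}, pointing to \cite{DHK:complextimeSB}) the nontrivial fact that for \emph{complex} $\tau\in\mathbb{D}(s,s)$, the analytic continuation to $K_\C$ of the power-series operator $e^{\frac{\tau}{2}\Delta_K}$ applied to such functions coincides with $B_{s,\tau}^K$. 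Your argument replaces that black box with more elementary input: you only use the real-time heat-kernel convolution \eqref{eq:heatkernel} to identify $B_{s,t}^{K_N^{(\beta)}}$ with the holomorphic extension of $e^{\frac{t}{2}\Delta_N^{(\beta)}}$ for $t>0$, then use the identity theorem in the single complex variable $\tau$ (with holomorphicity of the left side on $\C_+$ coming from the joint holomorphicity of $\rho_\C^K$ and compactness of $K_N^{(\beta)}$, and of the right side from finite-dimensionality via Corollary \ref{cor:DNinvariant}). In effect you re-derive, in the special case of trace polynomials, the portion of the \cite{DHK:complextimeSB} result that the paper uses. Both proofs ultimately rest on the same Theorem \ref{thm:intertwine1} and on the observation that trace polynomials are intrinsically holomorphic; yours is slightly longer but more self-contained, while the paper's is shorter at the cost of leaning on the complex-time semigroup identity. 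One small point you leave implicit (as does the paper): in step 1, the $e^{\frac{t}{2}\Delta_N^{(\beta)}}$ appearing in \eqref{eq:intertwine1exp} is defined by the power series, whereas the one appearing from \eqref{eq:heatkernel} is the heat semigroup; these agree on trace polynomials because $\Delta_N^{(\beta)}$ preserves each finite-dimensional space of trace polynomials of bounded degree, so the semigroup restricted there is the matrix exponential. It would be worth stating this explicitly, but it is not a gap.
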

The proof of Theorem \ref{thm:SBintertwine} for $\SO(N,\R)$ and $\Sp(N)$ can be found on p. \pageref{pf:thm:SBintertwine} and p. \pageref{pf:thm:SBintertwineSpN}, resp. The proof of this theorem for the $\SU(N)$ case is similar, and is discussed on p. \pageref{pf:thm:SBintertwineSUN}. 

Our main result concerns the limit of $\mathbf{B}_{s,\tau}^{K_N^{(\beta)}}$ on polynomials as $N\to\infty$. While the range of the boosted Segal-Bargmann transform is the space of trace polynomials, as $N\to\infty$ the image concentrates back onto the smaller subspace of Laurent polynomials. 

\begin{theorem} \label{thm:freeSBlimit} 
	Let $s>0$, $\tau\in\mathbb{D}(s,s)$, and $f\in \C[u,u^{-1}]$. Then there exist unique $g_{s,\tau},h_{s,\tau}\in\C [u,u^{-1}]$ such that
	\begin{align}
		\|\mathbf{B}_{s,\tau}^{K_N^{(\beta)}} f_N-[g_{s,\tau}]_N\|^2_{L^2(K_{N,\C}^{(\beta)},\mustNbeta;\MNC)}&=O\left(\frac{1}{N^2}\right),\label{eq:freeSBlimit}\\
		\|(\mathbf{B}_{s,\tau}^{K_N^{(\beta)}})^{-1}f_N-[h_{s,\tau} ]_N\|^2_{L^2(K_N^{(\beta)},\rhosNbeta;\MNC)}&=O\left(\frac{1}{N^2}\right). \label{eq:freeinverseSBlimit}
	\end{align}
	for $\beta\in\{1,2,4\}$. 
\end{theorem}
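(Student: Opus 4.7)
The plan is to combine the intertwining formula of Theorem \ref{thm:SBintertwine} with the $1/N$-expansion
$\DNbeta=\Lzerobeta+\tfrac{1}{N}\Lonebeta+\tfrac{1}{N^2}\Ltwobeta$
to perform a perturbative analysis of the semigroup $e^{\frac{\tau}{2}\DNbeta}$ around $e^{\frac{\tau}{2}\Lzerobeta}$, then transport the resulting coefficient-level estimate to the Hilbert-space bound using uniform moment estimates for the heat-kernel measures. The candidate limits are
\[
g_{s,\tau}:=e^{\frac{\tau}{2}\Lzerobeta}f,\qquad h_{s,\tau}:=e^{-\frac{\tau}{2}\Lzerobeta}f,
\]
both in $\Cuu$: from the explicit description of $\Lzerobeta$ in Definition \ref{def:Doperators} one reads off that $\Lzerobeta$ maps $\Cuu$ to itself and acts as a finite-dimensional operator on each fixed-degree component of $\Cuu$, so both exponentials are well defined on $\Cuu$.

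Viewing $f\in\Cuu$ as a $\mathbf{v}$-independent element of $\Cuuv$, Theorem \ref{thm:SBintertwine} yields $\mathbf{B}_{s,\tau}^{K_N^{(\beta)}}f_N = [e^{\frac{\tau}{2}\DNbeta}f]_N$, and Duhamel's formula gives
\[
e^{\frac{\tau}{2}\DNbeta}f - g_{s,\tau} = \frac{1}{N}\int_0^{\tau/2} e^{(\tau/2-r)\DNbeta}\left(\Lonebeta+\tfrac{1}{N}\Ltwobeta\right)e^{r\Lzerobeta}f\,dr.
\]
Because $\Lzerobeta,\Lonebeta,\Ltwobeta$ each preserve the natural total-degree filtration of $\Cuuv$ (the same filtration that makes $e^{\frac{\tau}{2}\DNbeta}$ well defined, cf.\ Corollary \ref{cor:DNinvariant}), the right-hand side lies in a fixed finite-dimensional subspace of $\Cuuv$ determined by $\deg f$ alone, with coefficients bounded by $O(1/N)$ uniformly on compact $\tau$-sets. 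Write this error as $\frac{1}{N}Q_{N,\tau}$ with $Q_{N,\tau}\in\Cuuv$ having $N$-uniformly bounded coefficients.

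To upgrade the coefficient bound to the Hilbert-space bound, I would invoke uniform-in-$N$ moment estimates of the form
\[
\|[v_{k_1}\cdots v_{k_\ell}u^m]_N\|_{L^2(K_{N,\C}^{(\beta)},\mustNbeta;\MNC)}=O(1),\qquad N\to\infty,
\]
for each fixed multi-index (the $\U(N)$ analogues appear in \cite{DHK:U(N)}, and comparable estimates for $\beta\in\{1,2,4\}$ should be established in earlier sections of the paper). These give $\|[Q_{N,\tau}]_N\|_{L^2(\mustNbeta;\MNC)}=O(1)$, hence
\[
\|\mathbf{B}_{s,\tau}^{K_N^{(\beta)}}f_N-[g_{s,\tau}]_N\|_{L^2(\mustNbeta;\MNC)}=O(1/N),
\]
which squares to give \eqref{eq:freeSBlimit}. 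Uniqueness of $g_{s,\tau}$ follows from matching lower bounds that prevent any nonzero Laurent polynomial from having vanishing asymptotic $L^2$ norm. For \eqref{eq:freeinverseSBlimit}, apply the forward estimate to $h_{s,\tau}$ to obtain $\|\mathbf{B}_{s,\tau}^{K_N^{(\beta)}}[h_{s,\tau}]_N-f_N\|_{L^2(\mustNbeta;\MNC)}=O(1/N)$ and invoke the unitarity of $\mathbf{B}_{s,\tau}^{K_N^{(\beta)}}$ to transport this into the required $L^2(\rhosNbeta;\MNC)$ bound.

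The main technical obstacle is precisely this uniform moment control on trace polynomials with respect to both $\rhosNbeta$ and $\mustNbeta$ for $\beta\in\{1,2,4\}$. Unlike the $\U(N)$ case, where the bounds follow cleanly from the free limit, the orthogonal and symplectic heat-kernel measures carry subleading corrections from the transposition symmetry; showing that these do not destabilize the $1/N$ expansion of the Duhamel remainder is where the bulk of the work lies.
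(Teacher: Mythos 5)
Your candidate limits are wrong, and this is not a minor slip: $g_{s,\tau}=e^{\frac{\tau}{2}\mathcal{L}_0}f$ does \emph{not} lie in $\C[u,u^{-1}]$, so it cannot serve as the $g_{s,\tau}$ the theorem asserts. Your claim that ``one reads off that $\mathcal{L}_0$ maps $\Cuu$ to itself'' is false: $\mathcal{L}_0 = -\mathcal{N}-2\mathcal{Y}_1-2\mathcal{Y}_2$, and $\mathcal{Y}_1$ multiplies by $v_k$. For instance, $\mathcal{Y}_1^+ u^m = \sum_{k=1}^{m-1}(m-k)v_ku^{m-k}$ for $m\ge 2$, so already $\mathcal{L}_0 u^2$ contains a $uv_1$ term. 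The paper's Example~\ref{ex:BSONcomputation} makes this concrete: $e^{\frac{\tau}{2}\mathcal{L}_0}u^2 = e^{-\tau}(u^2-\tau u v_1)\notin\Cuu$. The actual limits are $g_{s,\tau}=\pi_{s-\tau}\bigl(e^{\frac{\tau}{2}\mathcal{L}_0}f\bigr)$ and $h_{s,\tau}=\pi_{s}\bigl(e^{-\frac{\tau}{2}\mathcal{L}_0}f\bigr)$, i.e.\ you must compose with the \textbf{trace evaluation map} $\pi_{s-\tau}$ (resp.\ $\pi_s$) that substitutes $v_k\mapsto\nu_k(s-\tau)$ to collapse $\Cuuv$ back onto $\Cuu$.

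Because of this, your argument only handles half of the estimate. The Duhamel-type semigroup perturbation you describe gives $\|[e^{\frac{\tau}{2}\mathcal{D}_N^{(\beta)}}f]_N - [e^{\frac{\tau}{2}\mathcal{L}_0}f]_N\|=O(1/N)$, which is the harder piece of the paper's triangle-inequality split (and is proved there via the bound $\|e^{X+Y}-e^X\|\le\|Y\|e^{\|X\|}e^{\|Y\|}$ plus the sesquilinear form $\mathcal{B}$ of Lemma~\ref{lem:sesquilinearform}). But it does not touch the other piece: you still need the \textbf{concentration of measure} statement $\|[e^{\frac{\tau}{2}\mathcal{L}_0}f]_N - [\pi_{s-\tau}e^{\frac{\tau}{2}\mathcal{L}_0}f]_N\|^2_{L^2(\mu_{s,\tau})}=O(1/N^2)$ (Theorem~\ref{thm:tracelimit}), which says trace polynomials concentrate onto Laurent polynomials as $N\to\infty$. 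That estimate is not a ``uniform moment bound'' of the $O(1)$ type you invoke; it requires the second intertwining formula for $\mathcal{A}_{s,\tau}^{K_\C^{(\beta)}}$ (Theorems~\ref{thm:intertwiningII}, \ref{thm:ASpstintertwine}) and the identification $(e^{\wL_0^{s,\tau}}\iota(Q))(\mathbf{1})=\pi_{s-\tau}Q$ of Lemma~\ref{lem:treval2}, i.e.\ L\'{e}vy's free-limit moments $\nu_k$. Without it, your $O(1)$ moment estimates would only give an $O(1/N)$ \emph{difference of trace polynomials}, not convergence to any Laurent polynomial — and uniqueness would be undecidable, since you would not even have a candidate in the right space. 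Your reduction of \eqref{eq:freeinverseSBlimit} to the forward estimate via unitarity of $\mathbf{B}_{s,\tau}^{K_N^{(\beta)}}$ is a legitimate alternative to the paper's $\tau\mapsto-\tau$ substitution, but it is still contingent on first fixing $h_{s,\tau}$.
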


\begin{definition}\label{def:freeSB}
	The map $\Gst:\Cuu\to\Cuu$ given by $f\mapsto g_{s,\tau}$ is called the {\bf free Segal-Bargmann transform}, and the map $\Hst:\Cuu\to\Cuu$ given by $f\mapsto h_{s,\tau}$ is called the {\bf free inverse Segal-Bargmann transform}. Explicit formulas for $\Gst$ and $\Hst$ are given in \eqref{eq:Gst} and \eqref{eq:Hst}, respectively. 
\end{definition}

The proof of Theorem \ref{thm:freeSBlimit} for $\SO(N,\R)$ can be found on p. \pageref{pf:thm:freeSBlimit}. The proof of the theorem for the $\SU(N)$ and $\Sp(N)$ cases is entirely analogous to the $\SO(N,\R)$ case, so we do not provide the full details; the proofs for $\SU(N)$ and $\Sp(N)$ are discussed on pp. \pageref{pf:thm:freeSBlimitSUN} and \pageref{pf:thm:freeSBlimitSpN}. 

\begin{remark}
	The fact that the Segal-Bargmann transform on matrices in $\M_N(\C)$ has a meaningful limit $\mathscr{G}^t$ as $N\to\infty$ is due to Biane \cite[Proposition~13]{Biane:SB}. For the special case when $s=t>0$, the free Segal-Bargmann transform $\mathscr{G}_{t,t}$ of this paper is equivalent to $\mathscr{G}^t$, which Biane called the \emph{free Hall transform}. A two-parameter generalization of this transform, called the \emph{free unitary Segal-Bargmann transform}, $\mathscr{G}_{s,t}$, where $s>t/2>0$, was introduced in \cite{DHK:U(N)}. This version of the free Segal-Bargmann transform was also investigated by Ho in \cite{Ho:SB}; in particular, he gave an integral kernel for the large-$N$ limit of the Segal-Bargmann transform over $\U(N)$. In this paper, we study a slight generalization of $\mathscr{G}_{s,t}$ by allowing the time parameter $\tau$ to be complex, incorporating results from \cite{DHK:complextimeSB} on the complex-time Segal-Bargmann transform. When $\tau=t>0$, the free Segal-Bargmann transform $\mathscr{G}_{s,\tau}$ of Definition \ref{def:freeSB} is precisely the same as the operator $\mathscr{G}_{s,t}$ of \cite{DHK:U(N)}. 	
\end{remark}

\begin{remark}
	Many of the results in this paper on the Segal-Bargmann transform for $\SO(N,\R)$, $\SU(N)$, and $\Sp(N)$ correspond to work done by Driver, Hall, and Kemp on the Segal-Bargmann transform on $\U(N)$. The $\U(N)$ analogue of our intertwining formulas, Theorem \ref{thm:intertwine1}, is contained in \cite[Theorem 1.18]{DHK:U(N)}, where it is shown that $\Delta_{\U(N)}$ has the intertwining formula $$\mathcal{D}_N^{(2')}=\mathcal{L}_0+\frac{1}{N}\mathcal{L}_1^{(2')}+\frac{1}{N^2}\mathcal{L}_2^{(2')}$$ for certain pseudodifferential operators $\mathcal{L}_0, \mathcal{L}_1^{(2')}$, and $\mathcal{L}_2^{(2')}$ on the trace polynomial space $\Cuuv$ (cf. Remark \ref{rem:intertwineU(N)}), where the operator $\mathcal{L}_1^{(2')}$ is identically zero (as it is for $\mathcal{L}_1^{(2)}$ in the $\SU(N)$ case). 
	
	The \emph{same} operator $\mathcal{L}_0$ appears in the intertwining formulas for $\SO(N,\R)$, $\U(N)$, $\SU(N)$, and $\Sp(N)$. As $N\to\infty$, it is the operator $\Lzerobeta$ that drives the large-$N$ behavior of the transform; this is the key to our main result on the free Segal-Bargmann transform, Theorem \ref{thm:freeSBlimit}, which states that the large-$N$ limit of the Segal-Bargmann transform on $\SO(N,\R)$, $\SU(N)$, and $\Sp(N)$ is the same operator $\mathscr{G}_{s,\tau}$. Moreover, we emphasize that the rate of convergence in \emph{all} cases is $O(1/N^2)$, as it is in the analogous result for the $\U(N)$ case, \cite[Theorem 1.11]{DHK:U(N)}. This is somewhat surprising, since there is a nontrivial term of order $1/N$ in the intertwining formulas for $\SO(N,\R)$ and $\Sp(N)$ that does not appear in the $\U(N)$ and $\SU(N)$ cases. That we still obtain $O(1/N^2)$ convergence for $\SO(N,\R)$ and $\Sp(N)$ is due to the fact that in the scalar version of their intertwining formulas (cf. Theorems \ref{thm:intertwiningII} and \ref{thm:intertwiningIIpolySpN}), the term of order $1/N$ is a first order differential operator. As a result, the $1/N$ terms are ``washed away'' in the covariance estimates (cf. the proof of Theorem \ref{thm:tracelimit} in the $\SO(N,\R)$ case for a more precise explanation; the $\Sp(N)$ case is similar). 
\end{remark}


\section{Background} \label{sec:background}

In this section, we expand on the background required to prove our main results. In particular, we provide a brief overview of the heat kernel results used in constructing the Segal-Bargmann transform, and conclude by setting some notation that will be used for the remainder of the paper. 

\subsection{Heat kernels on Lie groups}

\begin{definition}
	Let $K$ be a Lie group with Lie algebra $\mathfrak{k}$. An inner product $\inpro{\cdot ,\cdot}_{\mathfrak{k}}$ on $\mathfrak{k}$ is {\bf $\Ad$-invariant} if, for all $X_1,X_2\in\mathfrak{k}$ and all $k\in K$, 
	\begin{equation*}
	\inpro{\Ad_k X_1,\Ad_k X_2}_{\mathfrak{k}}=\inpro{X_1,X_2}_{\mathfrak{k}}.
	\end{equation*}
	A group whose Lie algebra possesses an $\Ad$-invariant inner product is called {\bf compact type}. 
\end{definition}

The Lie groups $\SO(N,\R)$, $\SU(N)$, and $\Sp(N)$ studied in this paper are compact type (and are, in fact, compact). 

Compact type Lie groups have the following property. 

\begin{prop}[{[\cite{Milnor:curvatures}, Lemma 7.5]}]
	If $K$ is a compact type Lie group with a fixed $\Ad$-invariant inner product, the $K$ is isometrically isomorphic to a direct product group, i.e. $K\cong K_0\times \R^d$ for some compact Lie group $K_0$ and some nonnegative integer $d$.
\end{prop}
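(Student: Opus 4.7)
The plan is to decompose $K$ first at the level of its Lie algebra using the $\Ad$-invariant inner product, lift the decomposition to the universal cover, and then descend by controlling the discrete central kernel.

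At the Lie algebra level, I would orthogonally decompose $\mathfrak{k} = \mathfrak{s} \oplus \mathfrak{z}$, where $\mathfrak{z}$ is the center of $\mathfrak{k}$. Because $\Ad$-invariance is equivalent to every $\mathrm{ad}_X$ being skew-symmetric, the orthogonal complement of any ideal is itself an ideal; hence $\mathfrak{s} := \mathfrak{z}^\perp$ is an ideal and the splitting is a Lie algebra direct sum. The center of $\mathfrak{s}$ lies in $\mathfrak{s} \cap \mathfrak{z} = \{0\}$, so $\mathfrak{s}$ is semisimple, and since $\mathrm{ad}|_{\mathfrak{s}}$ consists of skew-symmetric operators with trivial kernel, the Killing form on $\mathfrak{s}$ satisfies $B(X,X) = -\|\mathrm{ad}_X\|_{HS}^2 < 0$ for $X \neq 0$. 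By Weyl's theorem, the simply connected Lie group $\widetilde{S}$ with Lie algebra $\mathfrak{s}$ is compact.

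Lifting the Lie algebra decomposition gives $\widetilde{K} \cong \widetilde{S} \times \mathfrak{z}$ (with $\mathfrak{z}$ viewed as an additive vector group), so $K = \widetilde{K}/\Gamma$ for some discrete central subgroup $\Gamma \subseteq Z(\widetilde{S}) \times \mathfrak{z}$. Since $\widetilde{S}$ is compact semisimple, $Z(\widetilde{S})$ is finite, so $L := \pi_{\mathfrak{z}}(\Gamma)$ is a discrete, finitely generated subgroup of $\mathfrak{z}$. I would then use the inner product a second time, this time on $\mathfrak{z}$: let $W = \mathrm{span}_{\R}(L)$ and let $W^\perp$ be its orthogonal complement in $\mathfrak{z}$. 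Since $\pi_{\mathfrak{z}}(\Gamma) \subseteq W$, we have $\Gamma \subseteq Z(\widetilde{S}) \times W$, and therefore
\[
K \;\cong\; K_0 \times \R^d, \qquad K_0 := (\widetilde{S} \times W)/\Gamma, \qquad d := \dim W^\perp.
\]
The group $K_0$ is compact: because $\Gamma \subseteq Z(\widetilde{S}) \times L$ and $L$ is a full-rank lattice in $W$, the quotient $K_0$ is a finite cover of the compact group $(\widetilde{S}/Z(\widetilde{S})) \times (W/L)$. The isomorphism is isometric because it mirrors the orthogonal Lie algebra splitting $\mathfrak{k} = \mathfrak{s} \oplus W \oplus W^\perp$, and the bi-invariant metric on $\widetilde{K}$ is built from that inner product.

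The main obstacle is making the second splitting of $\mathfrak{z}$ match $\Gamma$, so that the product decomposition survives the quotient by $\Gamma$. This is exactly what forces the choice $W = \mathrm{span}_{\R}(L)$, and it is the $\Ad$-invariant inner product on $\mathfrak{z}$ that supplies the canonical complement $W^\perp$ needed to complete the decomposition without sacrificing the metric. Once the Weyl compactness theorem and this orthogonal splitting of $\mathfrak{z}$ are in hand, the remaining verifications are routine.
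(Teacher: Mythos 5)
Your argument is correct and is, in substance, the same proof Milnor gives for Lemma 7.5 in the cited reference; the paper itself offers no proof, only the citation, so there is no competing in-text argument to compare against. The key steps you take---orthogonal splitting $\mathfrak{k}=\mathfrak{s}\oplus\mathfrak{z}$ (using that skew-symmetry of $\mathrm{ad}$ makes $\mathfrak{z}^\perp$ an ideal), negative-definiteness of the Killing form on $\mathfrak{s}$ and hence compactness of $\widetilde{S}$ by Weyl, passage to the universal cover $\widetilde{S}\times\mathfrak{z}$, and the second orthogonal splitting of $\mathfrak{z}$ adapted to the projected lattice $L=\pi_{\mathfrak{z}}(\Gamma)$---are exactly the standard route, and your justification that $L$ is discrete (via finiteness of $Z(\widetilde{S})$) and that $K_0$ is a finite cover of $(\widetilde{S}/Z(\widetilde{S}))\times(W/L)$ is sound.

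The one thing worth stating explicitly is that the whole argument presumes $K$ connected (the universal-cover step requires it, and the conclusion is false for disconnected groups such as $\Z$). Milnor's Lemma 7.5 carries this hypothesis, and it is in force throughout the surrounding paper, but it is silently used in your write-up; make it explicit. Otherwise the proposal is complete.
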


Let $K$ be a compact type Lie group with Lie algebra $\LieK$, and fix an $\Ad$-invariant inner product on $\mathfrak{k}$. Choose a right Haar measure $\lambda$ on $K$; we will write $dx$ for $\lambda(dx)$ and $L^2(K)$ for $L^2(K,\lambda)$. If $\beta_\LieK$ is an orthonormal basis for $\LieK$, we define the Laplace operator $\Delta_K$ on $K$ by 
\begin{equation} \label{eq:Laplace}
\Delta_K=\sum_{X\in\beta_\LieK}\wX ^2,
\end{equation}
where for any $X\in\LieK$, $\wX$ is the left-invariant vector field given by
\begin{equation}
(\wX f)(k)=\frac{d}{dt}f(ke^{tX})\bigg|_{t=0}
\end{equation}
for any smooth real or complex-valued function $f$ on $K$. The operator is independent of orthonormal basis chosen. For a detailed overview of left-invariant Laplacian operators, see \cite{Milnor:curvatures}; see also \cite{D:SBH, DGS:holo, DHK:complextimeSB} for further background on heat kernels on compact type Lie groups.  

The operator $\Delta_K$ is left-invariant for any Lie group $K$. When $K$ is compact type, $\Delta_K$ is also bi-invariant. In addition, it is well known that $\Delta_K$ on $K$ is elliptic and essentially self-adjoint on $L^2(K)$ with respect to any right invariant Haar measure (see \cite[Section 3.2]{DHK:complextimeSB}). As a result, there exists an associated heat kernel $\rho_t^K\in C^\infty (K,(0,\infty))$ satisfying \eqref{eq:heatkernel}. 

\begin{definition}
	Let $s>0$ and $\tau = t+i\theta\in\mathbb{D}(s,s)$. We define a second order left-invariant differential operator $\mathcal{A}^{K_\C}_{s,\tau}$ on $K_\C$ by
	\begin{equation}
	\mathcal{A}^{K_\C}_{s,\tau}=\sum_{j=1}^d\left[\left(s-\frac{t}{2}\right)\wX^2_j+\frac{t}{2}\wY^2_j-\theta\wX_j\wY_j\right],
	\end{equation}
	where $\{X_j\}_{j=1}^d$ is any orthonormal basis of $\LieK$, and $Y_j=JX_j$ where $J$ is the operation of multiplication of $i$ on $\LieK_\C=\Lie(K_\C)$. 
\end{definition}

The operator $\mathcal{A}^{K_\C}_{s,\tau}|_{C_c^\infty (K_\C)}$ is essentially self-adjoint on $L^2(K_\C)$ with respect to any right Haar measure (see \cite[Section 3.2]{DHK:complextimeSB}). There exists a corresponding heat kernel density $\mu_{s,\tau}^{K_\C}\in C^{\infty}(K_\C,(0,\infty))$ such that
\begin{equation} \label{eq:heatKC}
\left(e^{\frac{1}{2}\mathcal{A}^{K_\C}_{s,\tau}}f\right)(z)=\int_{K_\C}\mu_{s,\tau}^{K_\C}(w)f(zw)\, dz\hspace{.5cm}\mbox{for all }f\in L^2(K_\C).
\end{equation}

\subsection{Notation and definitions}

The classical Segal-Bargmann transform is an isometric isomorphism from $L^2(\R^d)$ onto $\mathcal{H}L^2(\C^d)$. In order to extend the Segal-Bargmann transform to Lie groups, we note that every connected Lie group has a \emph{complexification} defined by a certain representation-theoretic universal property, mimicking the relationship between $\R^d$ and $\C^d$ (see \cite[Section 2]{DHK:complextimeSB}). For the present purposes, it is enough to know the concrete complexifications of $\SO(N,\R)$, $\SU(N)$, and $\Sp(N)$, which we describe below. 

Let $\SO(N,\R)$ denote the {\bf special orthogonal group}, defined by  $\SO(N,\R)=\{A\in\M_N(\R)\,:\,AA^\intercal=I_N,\,\det(A)=1\}.$ Its complexification, $\SO(N,\R)_\C$, is given by $\SO(N,\C)=\{A\in\MNC\,:\,AA^\intercal=I_N,\, \det(A)=1\}.$ The Lie group $\SO(N,\R)$ has Lie algebra $\so(N,\R)=\{X\in\M_N(\R)\,:\, X^\intercal=-X\},$ while $\SO(N,\C)$ has Lie algebra $\so(N,\C)=\{X\in\MNC\,:\, X^\intercal=-X\}.$

The {\bf unitary group}, $\U(N)$, is defined by $\U(N)=\{A\in\MNC\, :\,AA^*=I_N\}$. The Lie algebra of $\U(N)$ is $\uu(N)=\{X\in\MNC\, :\, X^*=-X\}$. The {\bf special unitary group} $\SU(N)$ is the subgroup of $\U(N)$ consisting of matrices with determinant $1$. The complexification of $\SU(N)$ is the special linear group $\SL(N,\C)=\{A\in\GL_N(\C)\, :\, \det(A)=1\}$. The Lie algebra of $\SU(N)$ is $\mathfrak{su}(N)=\{X\in\MNC\, :\, X^*=-X,\Tr(X)=0\}$, and the Lie algebra of $\SL(N,\C)$ is $\mathfrak{sl}(N,\C)=\{X\in\MNC\, :\, \Tr(X)=0\}$. 

There are two standard realizations of the compact symplectic group $\Sp(N)$, first as a group of $N\times N$  matrices over the quaternions, and second as a group of $2N\times 2N$ matrices over $\C$. It is more convenient for us to work with the latter realization, which we introduce here; we address the former realization in Section \ref{sec:SpNquaternion}. Our choice of realization means that we will often have to add a normalization factor of $\frac{1}{2}$ when working with $\Sp(N)\subseteq\M_{2N}(\C)$. 

Set $$\Omega_0=\left[
\begin{array}{rr}
0 & 1 \\
-1 & 0 \\
\end{array}
\right]\in\M_2(\C).$$
For $N\in\N$, we define $\Omega=\Omega_N:=\diag(\underbrace{\Omega_0,\Omega_0,\hdots ,\Omega_0}_{N\text{ times}})\in \M_{2N}(\C).$
We define the {\bf complex symplectic group} by $\Sp(N,\C)=\{A\in\M_{2N}(\C)\, :\, A^\intercal \Omega A=\Omega\}.$ The Lie algebra of $\Sp(N,\C)$ is $
\symp(N,\C)=\{X\in M_{2N}(\C)\, :\, \Omega X^\intercal\Omega=X\}.$ The {\bf compact symplectic group} $\Sp(N)$ consists of the elements of $\Sp(N,\C)$ which are also unitary, i.e. 
\begin{equation} \label{def:SpN}
	\Sp(N)=\Sp(N,\C)\cap \U(2N)=\{A\in\M_{2N}(\C)\, :\, A^\intercal \Omega A=\Omega, A^*A=I_{2N}\}.
\end{equation}
The Lie algebra of $\Sp(N)$ is $
\symp(N)=\{X\in M_{2N}(\C)\, :\, \Omega X^\intercal\Omega=X, X^*=-X\}.$
The complexification of $\Sp(N)$ is $\Sp(N,\C)$. 

The groups $\SO(N,\R)$, $\SU(N)$, and $\Sp(N)$ can be thought of as (special) unitary groups over $\R$, $\C$, and $\HH$, resp. For notational convenience, throughout the paper we associate the parameters $\beta=1,2,2',4$ with $\SO(N,\R)$, $\SU(N)$, $\U(N)$, and $\Sp(N)$, resp., where $\beta$ refers to the dimension of $\R$, $\C$, and $\HH$ as associative algebras over $\R$.

In order for the large-$N$ limit of the Segal-Bargmann transform to converge in a meaningful way, we require the following normalizations of the trace and Hilbert-Schmidt inner products.
\begin{notation}
	For $A\in\MNC$, define
	\begin{align}
	\tr(A)=\frac{1}{N}\Tr(A),\\
	\wtr (A)=\frac{1}{2N}\Tr(A),
	\end{align}
	where $\Tr$ denotes the usual trace. We use $\tr$ when applying the Segal-Bargmann transform on $\SO(N,\R)$ and $\SU(N)$, and $\wtr$ when applying the Segal-Bargmann transform on $\Sp(N)$. 
	
	We also define inner products on $\so(N,\R)$, $\mathfrak{su}(N)$, and $\symp(N)$ by
	\begin{align}
	\inpro{X,Y}_{\so(N,\R)}&=\frac{N}{2}\Tr(XY^*)=\frac{N^2}{2}\tr(XY^*),\hspace{.5cm}X,Y\in\so(N,\R),\hspace{.5cm}\\
	\inpro{X,Y}_{\su(N)}&=N\Tr(XY^*)=N^2\tr(XY^*),\hspace{.5cm}X,Y\in\su(N),\label{eq:inproSUN}\\
	\inpro{X,Y}_{\symp(N)}&=N\Tr(XY^*)=2N^2\wtr(XY^*),\hspace{.5cm}X,Y\in\symp(N). \label{eq:inproSpNcomplex}
	\end{align}
\end{notation}


\section{The Segal-Bargmann transform on $\SO(N,\R)$} 

In this section and the next, we prove Theorem \ref{thm:intertwine1}, Theorem \ref{thm:SBintertwine}, and Theorem \ref{thm:freeSBlimit} for the Segal-Bargmann transform on $\SO(N,\R)$. We begin with a set of ``magic formulas'' which are the key ingredient to proving the $\SO(N,\R)$ version of the intertwining formula for $\Delta_{\SO(N,\R)}$ of Theorem \ref{thm:intertwine1}. These formulas, which give simple expressions for certain quadratic matrix sums, are the analogues of the ``magic formulas'' of \cite[Proposition 3.1]{DHK:U(N)} for $\U(N)$. They allow us to compute the Segal-Bargmann transform for trace polynomials on $\SO(N,\R)$ (\ref{thm:SBintertwine} for $\SO(N,\R)$). We then prove a second intertwining formula for the operator $\mathcal{A}_{s,\tau}^{\SO(N,\C)}$ which is used in proving the limit theorems of Section \ref{sec:limittheoremsSONR}.

\subsection{Magic formulas and derivative formulas}

\begin{lemma}[Elementary matrix identities]\label{lem:elementary}
	Let $E_{i,j}\in \MNC$ denote the $N\times N$ matrix with a $1$ in the $(i,j)$th entry and zeros elsewhere. For $1\leq i,j,k,\ell\leq N$ and $A=(A_{i,j})\in\MNC$, we have
	\begin{align}
	E_{i,j}E_{k,\ell}&=\delta_{j,k}E_{i,\ell} \label{lem:elem1}\\
	E_{i,j}AE_{i,j}&=A_{j,i}E_{i,j}\label{lem:elem2} \\
	E_{i,j}AE_{j,i}&=A_{j,j}E_{i,i} \label{lem:elem3}\\
	\Tr(AE_{i,j})&=A_{j,i} \label{lem:elem4}
	\end{align}
\end{lemma}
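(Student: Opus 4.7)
The plan is to verify each identity by direct index computation, using the standard matrix multiplication rule $(BC)_{p,q} = \sum_m B_{p,m} C_{m,q}$ together with the entry formula $(E_{i,j})_{p,q} = \delta_{i,p}\delta_{j,q}$. Identity \eqref{lem:elem1} serves as the foundation: the three remaining identities all reduce to it once $A$ is expanded in the standard basis as $A = \sum_{p,q} A_{p,q}\, E_{p,q}$.

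First I would establish \eqref{lem:elem1} by writing out the $(p,q)$-entry of $E_{i,j}E_{k,\ell}$. The sum $\sum_m (E_{i,j})_{p,m}(E_{k,\ell})_{m,q}$ collapses since the first factor forces $p = i$ and $m = j$, while the second forces $m = k$ and $q = \ell$; the joint constraint $j = k$ produces the Kronecker factor and leaves $(E_{i,\ell})_{p,q}$. For \eqref{lem:elem2} and \eqref{lem:elem3}, I would substitute the basis expansion of $A$ into the triple product and apply \eqref{lem:elem1} twice. In \eqref{lem:elem2} the two inner deltas force $p = j$ and $q = i$, leaving the single surviving term $A_{j,i} E_{i,j}$; in \eqref{lem:elem3} they force $p = j$ and $q = j$, leaving $A_{j,j} E_{i,i}$. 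Finally, \eqref{lem:elem4} follows by writing $\Tr(AE_{i,j}) = \sum_k (AE_{i,j})_{k,k} = \sum_{k,m} A_{k,m}\delta_{i,m}\delta_{j,k}$, which isolates $A_{j,i}$.

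There is no real obstacle beyond bookkeeping: every step is a one-line Kronecker contraction. The only point requiring mild care is index reversal, since the column index of $A E_{i,j}$ interacts with the row index of $E_{i,j}$, which is why the diagonal entries in \eqref{lem:elem4} pick out $A_{j,i}$ rather than $A_{i,j}$. Once this is kept straight, the four identities fall out uniformly from a single computational template.
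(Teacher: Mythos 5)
Your proof is correct and takes essentially the same elementary, direct-computation approach as the paper; the only minor organizational difference is that you derive \eqref{lem:elem2} and \eqref{lem:elem3} by expanding $A$ in the standard basis and applying \eqref{lem:elem1} twice, whereas the paper instead argues directly from the observation that $AE_{i,j}$ (resp.\ $E_{i,j}A$) is the matrix whose only nonzero column (resp.\ row) is a copy of a column (resp.\ row) of $A$. Both routes are one-line Kronecker contractions and reach the same conclusion.
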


\begin{proof}
	For $1\leq a,b,\leq N$,  $$[E_{i,j}E_{k,\ell}]_{a,b}=\sum_{h=1}^N [E_{i,j}]_{a,h}[E_{k,\ell}]_{h,b}.$$ Note that $[E_{i,j}]_{a,h}[E_{k,\ell}]_{h,b}$ equals $1$ precisely when $a=i$, $b=\ell$, and $j=k$, and $h=j=k$, and equals $0$ otherwise, so $[E_{i,j}E_{k,\ell}]_{a,b}=\delta_{j,k}\delta_{a,i}\delta_{b,\ell}$, which proves (\ref{lem:elem1}).
	
	We now make the following observations: for any $N\times N$ matrix $A$, $AE_{i,j}$ is the matrix which is all zeros except for the $j$th column, which is equal to the $i$th column of $A$. Hence the only (possibly) nonzero diagonal entry is the $(j,j)$th entry, which is $A_{j,i}$. This proves (\ref{lem:elem4}). Similarly, $E_{i,j}A$ is the matrix which is all zeros except for the $i$ row, which is the $j$th row of $A$. Putting these observations together yields (\ref{lem:elem2}) and (\ref{lem:elem3}). 
\end{proof}

\begin{prop}[Magic formulas for $\SO(N,\R)$] \label{prop:magic}
	For any $A,B\in M_N(\C)$, 
	\begin{align}
	\sum_{X\in\beta_{\so(N,\R)}}X^2&=-\frac{N-1}{N}I_N=-I_N+\frac{1}{N}I_N\label{eq:magic1}\\
	\sum_{X\in\beta_{\so(N,\R)}}XAX&=\frac{1}{N}A^\intercal-\tr(A)I_N\label{eq:magic2}\\
	\sum_{X\in\beta_{\so(N,\R)}}\tr (XA)X&=\frac{1}{N^2}(A^\intercal -A)\label{eq:magic3}\\
	\sum_{X\in\beta_{\so(N,\R)}}\tr(XA)\tr(XB)&=\frac{1}{N^2}(\tr(A^\intercal B)-\tr (AB))\label{eq:magic4}
	\end{align}
\end{prop}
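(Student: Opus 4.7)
The plan is to choose an explicit orthonormal basis of $\so(N,\R)$ and then verify each identity by brute force, using the elementary matrix identities of Lemma \ref{lem:elementary} to reduce everything to sums over indices. Concretely, define $F_{j,k}=E_{j,k}-E_{k,j}$ for $1\le j<k\le N$. A quick computation with \eqref{lem:elem1} gives $F_{j,k}^2=-(E_{j,j}+E_{k,k})$, so $\Tr(F_{j,k}F_{j,k}^*)=-\Tr(F_{j,k}^2)=2$, hence $\inpro{F_{j,k},F_{j,k}}_{\so(N,\R)}=N$. Thus the collection
\[
X_{j,k}=\tfrac{1}{\sqrt{N}}(E_{j,k}-E_{k,j}),\qquad 1\le j<k\le N,
\]
is an orthonormal basis of $\so(N,\R)$ (orthogonality between different $X_{j,k}$'s is immediate from \eqref{lem:elem1}). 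Since $\sum_{X\in\beta_{\so(N,\R)}}$ is basis-independent, it suffices to evaluate each sum using this $X_{j,k}$.

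For \eqref{eq:magic1} I would substitute and use $F_{j,k}^2=-(E_{j,j}+E_{k,k})$; each diagonal projector $E_{\ell,\ell}$ appears with multiplicity equal to the number of pairs containing $\ell$, namely $N-1$, yielding $\sum_{j<k}F_{j,k}^2=-(N-1)I_N$ and dividing by $N$ gives the result. For \eqref{eq:magic2}, expand
\[
(E_{j,k}-E_{k,j})A(E_{j,k}-E_{k,j})=E_{j,k}AE_{j,k}-E_{j,k}AE_{k,j}-E_{k,j}AE_{j,k}+E_{k,j}AE_{k,j}
\]
and apply \eqref{lem:elem2}--\eqref{lem:elem3} to get $A_{k,j}E_{j,k}+A_{j,k}E_{k,j}-A_{k,k}E_{j,j}-A_{j,j}E_{k,k}$. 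Summing over $j<k$: the first two terms assemble exactly into the off-diagonal part of $A^\intercal$, while the bookkeeping for the last two produces $A_{\ell,\ell}-\Tr(A)$ as the coefficient of $E_{\ell,\ell}$, which combines with the (missing) diagonal of $A^\intercal$ to yield the clean expression $\tfrac{1}{N}A^\intercal-\tr(A)I_N$.

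For the last two identities, the strategy is the same but easier, since \eqref{lem:elem4} converts each trace directly into matrix entries: $\tr((E_{j,k}-E_{k,j})A)=(A_{k,j}-A_{j,k})/N$ and similarly for $B$. Formula \eqref{eq:magic3} then reduces to summing $\tfrac{1}{N^2}(A_{k,j}-A_{j,k})(E_{j,k}-E_{k,j})$ over $j<k$, and re-indexing off-diagonal pairs $(j,k)$ with $j\ne k$ yields $\tfrac{1}{N^2}(A^\intercal-A)$. Formula \eqref{eq:magic4} reduces to $\tfrac{1}{N^3}\sum_{j<k}(A_{k,j}-A_{j,k})(B_{k,j}-B_{j,k})$; expanding and symmetrizing over $(j,k)\mapsto(k,j)$ turns this into $\tfrac{1}{N^3}\sum_{j\ne k}\bigl(A_{j,k}B_{j,k}-A_{j,k}B_{k,j}\bigr)$, which is precisely $\tfrac{1}{N^2}\bigl(\tr(A^\intercal B)-\tr(AB)\bigr)$ since the diagonal contributions $A_{j,j}B_{j,j}$ cancel between the two trace expressions.

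No step is genuinely hard; the only obstacle is careful bookkeeping over the index set $\{j<k\}$ versus $\{j\ne k\}$ and tracking when diagonal terms cancel. The cleanest way to manage this is to always split an off-diagonal matrix as (full matrix) $-$ (diagonal) at the appropriate moment, so that diagonal contributions transparently cancel between the different pieces coming from $E_{j,k}AE_{j,k}$, $E_{k,j}AE_{k,j}$ and the ``diagonal'' pieces $E_{j,k}AE_{k,j}$, $E_{k,j}AE_{j,k}$.
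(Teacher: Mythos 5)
Your proposal is correct and follows essentially the same route as the paper: choose the orthonormal basis $\{\frac{1}{\sqrt{N}}(E_{j,k}-E_{k,j})\}_{j<k}$, expand using the elementary matrix identities of Lemma~\ref{lem:elementary}, and do the index bookkeeping. The only cosmetic difference is that the paper derives \eqref{eq:magic1} from \eqref{eq:magic2} by setting $A=I_N$ and \eqref{eq:magic4} from \eqref{eq:magic3} by right-multiplying by $B$ and taking $\tr$, while you verify those two directly; both are fine.
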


\begin{proof}
	 As shown in Theorem 3.3 of \cite{DHK:U(N)}, the quantity $\sum_{X\in\beta_{\so(N,\R)}}XAX$ is independent of the choice of orthonormal basis. Hence to see (\ref{eq:magic2}), we may set $\beta_{\so(N,\R)}$ to be the orthonormal basis $$\beta_{\so(N,\R)}=\left\{\frac{1}{\sqrt{N}}(E_{i,j}-E_{j,i})\right\}_{1\leq i<j\leq N}.$$ By Lemma \ref{lem:elementary},
	\begin{align*}
	\sum_{i<j}(E_{i,j}-E_{j,i})A(E_{i,j}-E_{j,i})&=\sum_{i<j}[E_{i,j}AE_{i,j}-E_{i,j}AE_{j,i}-E_{j,i}AE_{i,j}+E_{j,i}AE_{j,i}]\\
	&=\sum_{i\neq j}(E_{i,j}AE_{i,j}-E_{i,j}AE_{j,i})\\
	&=\sum_{i\neq j}A_{j,i}E_{i,j}-\sum_{i\neq j}A_{j,j}E_{i,i}\\
	&=\sum_{i\neq j}A_{j,i}E_{i,j}+\sum_{i=1}^NA_{i,i}E_{i,i}-\sum_{i=1}^NA_{i,i}E_{i,i}-\sum_{i\neq j}A_{j,j}E_{i,i}\\
	&=A^\intercal-\Tr(A)I_N,
	\end{align*}
	and dividing by $N$ proves (\ref{eq:magic2}). Equation (\ref{eq:magic1}) then follows from (\ref{eq:magic2}) by setting $A=I_N$.
	
	Similarly, we compute
	\begin{align*}
	\sum_{i<j}\Tr(A(E_{i,j}-E_{j,i}))(E_{i,j}-E_{j,i})&=\sum_{i<j}[\Tr(AE_{i,j})E_{i,j} - \Tr(AE_{i,j})E_{j,i}\\
	&\hspace{1.5cm}-\Tr(AE_{j,i})E_{i,j}+\Tr(AE_{j,i})E_{j,i}]\\
	&=\sum_{i\neq j} [\Tr(AE_{i,j})E_{i,j}-\Tr(AE_{j,i})E_{i,j}]\\
	&=\sum_{i\neq j}A_{j,i}E_{i,j}-\sum_{i\neq j}A_{i,j}E_{i,j}\\
	&=\sum_{i\neq j}A_{j,i}E_{i,j}+\sum_{i=1}^NA_{i,i}E_{i,i}\\
	&\hspace{1.5cm}-\sum_{i=1}^NA_{i,i}E_{i,i}-\sum_{i\neq j}A_{i,j}E_{i,j}\\
	&=A^\intercal-A,
	\end{align*}
	which is equivalent to (\ref{eq:magic3}). Equation (\ref{eq:magic4}) now follows from (\ref{eq:magic3}) by multiplying by $B$ and taking $\tr$.
\end{proof}

\begin{remark}
	Our proof of the magic formulas for $\SO(N,\R)$ is primarily computational, relying on the elementary matrix identities of Lemma \ref{lem:elementary}. These results can also be obtained more abstractly. For $\beta\in\{1,2,2',4\}$, let $\LieK_N^{(\beta)}$ denote $\so(N,\R)$, $\mathfrak{su}(N)$, $\mathfrak{u}(N)$, and $\symp(N)$, resp., and let $\beta_{\LieK_N^{(\beta)}}$ be an orthonormal basis for $\LieK_N^{(\beta)}$. In \cite[Lemma 1.2.1]{Levy:master}, L\'{e}vy provides an explicit decomposition of the {\bf Casimir element} $C_{\LieK_N^{(\beta)}}$, defined as the tensor $$C_{\LieK_N^{(\beta)}}=\sum_{X\in\beta_{\LieK_N^{(\beta)}}}X\otimes X.$$
	Let
	\begin{equation*}
	T=\sum_{a,b=1}^N E_{a,b}\otimes E_{b,a}\mbox{\hspace{.5cm}and\hspace{.5cm}} P=\sum_{a,b=1}^N E_{a,b}\otimes E_{a,b}.
	\end{equation*}
	For $\mathbb{K}\in\{\R,\C,\HH\}$, set $\mathbf{I}(\mathbb{K})=\{\idH,\ii,\jj,\kk\}\cap\mathbb{K}$ and define two elements $\Real^{\mathbb{K}}$ and $\Co^\mathbb{K}$ of $\KK\otimes_\R\KK$ by
	\begin{equation*}
	\Real^\KK=\sum_{\gamma\in\mathbf{I}(\KK)}\gamma\otimes\gamma^{-1}\mbox{\hspace{.5cm}and\hspace{.5cm}}\Co^\KK=\sum_{\gamma\in\mathbf{I}(\KK)}\gamma\otimes\gamma.
	\end{equation*}
	Then for $\beta\in\{1,2'4\}$ (where $2'$ corresponds to the value $2$), the Casimir element $C_{\LieK_N^{(\beta)}}$ is given by
	\begin{equation*}
	C_{\LieK_N^{(\beta)}}=\frac{1}{\beta N}\left(-T\otimes\Real^\KK + P\otimes\Co^\KK\right),
	\end{equation*}
	and for $\beta=2$, $C_{\su(N)}=C_{\mathfrak{u}(N)}-\frac{1}{N^2}iI_N\otimes iI_N$. This decomposition can be used to compute any expression of the form $\sum_{X\in\beta_{\LieK_N^{(\beta)}}}B(X,X)$, where $B$ is an $\R$-bilinear map. In particular, the magic formulas for $\SO(N,\R)$, $\SU(N)$, and $\Sp(N)$ (cf. Propositions \ref{prop:magic}, \ref{prop:magicSUN}, and \ref{prop:magicSpN}) can be obtained in this way.  
\end{remark}

\begin{prop}[Derivative formulas for $\SO(N,\R)$]
	Let $X\in\beta_{\so(N,\R)}$. The following identities hold on $\SO(N,\R)$ and $\SO(N,\C)$: 
	\begin{gather}
	\widetilde{X} A^m=\sum_{j=1}^mA^jXA^{m-j},\hspace{.5cm}m\geq 0\label{eq:XAmpos}\\
	\widetilde{X} A^m=-\sum_{j=m+1}^0A^jXA^{m-j},\hspace{.5cm}m<0\label{eq:XAmneg}\\
	\widetilde{X}\tr(A^m)=m\cdot\tr (XA^m),\hspace{.5cm}m\in\Z\label{eq:XtrAm}\\
	\Delta_{\SO(N,\R)}A^m=2\Ind_{m\geq 2}\left[\frac{1}{N}\sum_{j=1}^{m-1}(m-j)A^{m-2j}-\sum_{j=1}^{m-1}(m-j)\tr(A^j)A^{m-j}\right] \nonumber\\
	\hspace{2cm}-\frac{m(N-1)}{N}A^m,\hspace{.5cm}m\geq 0\label{eq:DAmpos}\\
	\Delta_{\SO(N,\R)}A^m=2\Ind_{m\leq -2}\left[\frac{1}{N}\sum_{j=m+1}^{-1}(-m+j)A^{m-2j}-\sum_{j=m+1}^{-1}(-m+j)\tr(A^{j})A^{m-j}\right]\nonumber\\
	\hspace{1.5cm}+\frac{m(N-1)}{N}A^m,\hspace{.5cm}m<0\label{eq:DAmneg}\\
	\sum_{X\in\beta_{\so(N,\R)}}\wX\tr(A^m)\cdot\wX A^p=\frac{mp}{N^2}(A^{p-m}-A^{p+m}),\hspace{.5cm}m,p\in\Z.\label{eq:XtrAmXAp}
	\end{gather}	
\end{prop}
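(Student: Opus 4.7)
The plan is to derive all six identities directly from the definition $(\widetilde{X}F)(A) = \frac{d}{dt}F(Ae^{tX})|_{t=0}$, leveraging the magic formulas of Proposition \ref{prop:magic} together with the identity $A^\intercal = A^{-1}$ valid on $\SO(N,\C)\supset\SO(N,\R)$. This last identity is essential: it converts transposes of matrix powers, which naturally appear after applying the magic formulas, into negative powers, keeping the computation closed within polynomials in $A$ and $A^{-1}$.

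Formulas \eqref{eq:XAmpos} and \eqref{eq:XAmneg} are product-rule computations. Expanding $(Ae^{tX})^m$ as an $m$-fold product and differentiating factor by factor produces \eqref{eq:XAmpos} for $m \geq 0$. The substitution $(Ae^{tX})^m = (e^{-tX}A^{-1})^{-m}$ together with the reindexing $j = -k$ and tracking the sign from $\frac{d}{dt}(e^{-tX}A^{-1})|_{t=0} = -XA^{-1}$ produces \eqref{eq:XAmneg} for $m < 0$. Formula \eqref{eq:XtrAm} is then immediate: $\widetilde{X}$ commutes with $\tr$, and the cyclic property of the trace collapses each of the $|m|$ summands to $\tr(XA^m)$. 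Formula \eqref{eq:XtrAmXAp} combines these: factor out $\widetilde{X}\tr(A^m) = m\tr(XA^m)$, pull the sum over $X$ inside, and apply magic formula \eqref{eq:magic3} to obtain $\sum_X\tr(XA^m)X = \frac{1}{N^2}(A^{-m} - A^m)$ (using $(A^m)^\intercal = A^{-m}$). The resulting expression $A^j(A^{-m}-A^m)A^{p-j}$ simplifies to $A^{p-m} - A^{p+m}$ independently of $j$, so summing over $j$ (a range of length $|p|$) and absorbing the sign of $p$ yields $\frac{mp}{N^2}(A^{p-m} - A^{p+m})$.

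The main work lies in \eqref{eq:DAmpos}--\eqref{eq:DAmneg}. Applying $\widetilde{X}$ to \eqref{eq:XAmpos} via the Leibniz rule expands $\widetilde{X}^2 A^m$ as a double sum of terms $A^a X A^b X A^c$ with $a + b + c = m$, $a \geq 1$, and $b, c \geq 0$. After summing over the basis using magic formula \eqref{eq:magic2} and invoking $(A^b)^\intercal = A^{-b}$, each term becomes $\frac{1}{N}A^{a-b+c} - \tr(A^b)A^{a+c}$. Parametrizing by $j = b$ so that for fixed $j$ the parameter $a$ ranges over $m - j$ values, and distinguishing the two halves of the Leibniz expansion (one ranging over $b \geq 0$, the other over $b \geq 1$) shows that triples with $b \geq 1$ appear with multiplicity $2$, producing the $2(m-j)$ coefficient in \eqref{eq:DAmpos}, while the $b = 0$ boundary, unique to the first half, yields via $\tr(I_N) = 1$ the standalone $-m\frac{N-1}{N}A^m$ term. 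Formula \eqref{eq:DAmneg} follows from the analogous computation starting from \eqref{eq:XAmneg}; the main sum inherits the same overall sign (the outer $-1$ in \eqref{eq:XAmneg} combines with an inner $-1$ from each Leibniz piece to give $+1$), while the standalone term flips to $+\frac{m(N-1)}{N}A^m$. The main obstacle throughout is this index bookkeeping: reconciling the two Leibniz halves, isolating the $b = 0$ boundary, and correctly combining signs in the $m < 0$ case; once those are set up carefully, the remainder is mechanical.
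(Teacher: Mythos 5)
Your proposal is correct and takes essentially the same approach as the paper: expand $\widetilde{X}^2 A^m$ via the Leibniz rule into a double sum of $A^aXA^bXA^c$ terms, apply the $\SO(N,\R)$ magic formulas, and use $A^\intercal = A^{-1}$ to close the calculus within Laurent polynomials in $A$. The only cosmetic differences are that you apply magic formula \eqref{eq:magic2} uniformly to all triples including the $b=0$ (adjacent-$X$'s) boundary, recovering the scalar contribution via $\tr(I_N)=1$, where the paper instead pulls out the $\sum_j A^jX^2A^{m-j}$ piece explicitly and handles it with \eqref{eq:magic1}; and for \eqref{eq:XtrAmXAp} you absorb the sign of $m$ into the factor $m\tr(XA^m)$ and branch only on the sign of $p$, giving two cases where the paper spells out all four sign combinations. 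Both distinctions are purely in the bookkeeping, not the argument.
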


We emphasize that these derivative formulas hold true only in the case on $\SO(N,\R)$ and $\SO(N,\C)$, in which case $A^{-1}=A^{\intercal}$. 

\begin{proof}
	The proof of (\ref{eq:XAmpos}), (\ref{eq:XAmneg}), and (\ref{eq:XtrAm}) for the $\U(N)$ case are contained in \cite{DHK:U(N)}; the proof of these equations for the $\SO(N,\R)$ case is identical. Next, by (\ref{eq:XAmpos}), we have, for $m\geq 0$,
	\begin{align}
	\widetilde{X}^2A^m=2\Ind_{m\geq 2}\sum_{\substack{j,k\neq 0\\j+k+\ell =m}}A^jXA^kXA^\ell +\sum_{j=1}^mA^jX^2A^{m-j}.\label{eq:XXAm}
	\end{align}
	Summing over all $X\in\beta_{\SO(N,\R)}$ and using the magic formulas (\ref{eq:magic1}) and (\ref{eq:magic2}), we have
	\begin{align*}
	\Delta_{\SO(N,\R)}A^m&=2\Ind_{m\geq 2}\sum_{X\in\bSON}\sum_{\substack{k,j\neq 0\\k+j+\ell =m}}A^kXA^jXA^\ell +\sum_{X\in\bSON}\sum_{j=1}^mA^jX^2A^{m-j}\\
	&=2\Ind_{m\geq 2}\sum_{\substack{k,j\neq 0\\k+j+\ell =m}}\left[\frac{1}{N}A^k(A^j)^\intercal A^\ell-\tr(A^j)A^{k+\ell}\right]-\frac{m(N-1)}{N}A^m\\
	&=2\Ind_{m\geq 2}\left[\frac{1}{N}\sum_{j=1}^{m-1}(m-j)A^{m-2j}-\sum_{j=1}^{m-1}(m-j)\tr(A^j)A^{m-j}\right]\\
	&\hspace{.5cm}-\frac{m(N-1)}{N}A^m,
	\end{align*}
	which proves (\ref{eq:DAmpos}). 
	
	Similarly, if $m<0$, we use (\ref{eq:XAmneg}) to get
	\begin{align*}
	\widetilde{X}^2A^m=2\Ind_{m\leq -2}\sum_{\substack{k,\ell<0,j\leq 0\\j+k+\ell =m}}A^jXA^kXA^\ell +\sum_{j=m+1}^0A^jX^2A^{m-j}. \label{eq:XXAmneg}
	\end{align*}
	Summing over all $X\in\beta_{\SO(N,\R)}$ and using the magic formulas (\ref{eq:magic1}) and (\ref{eq:magic2}), we have
	\begin{align*}
	\Delta_{\SO(N,\R)}A^m&=2\Ind_{m\geq 2}\sum_{X\in\bSON}\sum_{\substack{k,\ell<0,j\leq 0\\j+k+\ell =m}}A^kXA^jXA^\ell +\sum_{X\in\bSON}\sum_{j=m+1}^0A^jX^2A^{m-j}\\
	&=2\Ind_{m\geq 2}\sum_{\substack{k,\ell<0,j\leq 0\\j+k+\ell =m}}\left[\frac{1}{N}A^k(A^j)^\intercal A^\ell-\tr(A^j)A^{k+\ell}\right]+\frac{m(N-1)}{N}A^m\\
	&=2\Ind_{m\geq 2}\left[\frac{1}{N}\sum_{j=m+1}^{-1}(-m-j)A^{m-2j}-\sum_{j=m+1}^{-1}(-m-j)\tr(A^j)A^{m-j}\right]\\
	&\hspace{.5cm}+\frac{m(N-1)}{N}A^m,
	\end{align*}
	which proves (\ref{eq:DAmneg}). 
	
	For (\ref{eq:XtrAmXAp}), we first assume $m,p\geq 0$. Using equation (\ref{eq:magic3}) and the fact that $A^\intercal=A^{-1}$ for $A\in \SO(N,\R)$, 
	\begin{align*}
	\sum_{X\in\beta_{\so(N,\R)}}\widetilde{X}\tr(A^m)\cdot\widetilde{X}A^p&=\sum_{X\in\beta_{\so(N,\R)}}\left[\tr\left(\sum_{j=1}^m A^jXA^{m-j}\right)\left(\sum_{k=1}^{p}A^k XA^{p-k}\right)\right]\\
	&=\sum_{k=1}^{p}A^k\sum_{X\in\beta_{\so(N,\R)}}\left(\sum_{j=1}^m\tr(XA^m)\right) XA^{p-k}\\
	&=\sum_{k=1}^{p}mA^k\left(\sum_{X\in\beta_{\so(N,\R)}}\tr(XA^m)X\right)A^{p-k}\\
	&=\frac{mp}{N^2}(A^{p-m}-A^{p+m}).
	\end{align*}
	
	For $m<0$ and $p\geq 0$, we have 
	\begin{align*}
	\sum_{X\in\beta_{\so(N,\R)}}\widetilde{X}\tr(A^m)\cdot\widetilde{X}A^p&=\sum_{X\in\beta_{\so(N,\R)}}\left[\tr\left(-\sum_{j=m+1}^0 A^jXA^{m-j}\right)\left(\sum_{k=1}^{p}A^k XA^{p-k}\right)\right]\\
	&=-\sum_{k=1}^{p}A^k\sum_{X\in\beta_{\so(N,\R)}}\left(\sum_{j=m+1}^0\tr(XA^m)\right) XA^{p-k}\\
	&=-\sum_{k=1}^{p}(-m)A^k\left(\sum_{X\in\beta_{\so(N,\R)}}\tr(XA^m)X\right)A^{p-k}\\
	&=\frac{mp}{N^2}(A^{p-m}-A^{p+m}).
	\end{align*}
	
	For $m\geq 0$ and $p<0$, we have
	\begin{align*}
	\sum_{X\in\beta_{\so(N,\R)}}\widetilde{X}\tr(A^m)\cdot\widetilde{X}A^p&=\sum_{X\in\beta_{\so(N,\R)}}\left[\tr\left(\sum_{j=1}^m A^jXA^{m-j}\right)\left(-\sum_{k=p+1}^{0}A^k XA^{p-k}\right)\right]\\
	&=-\sum_{k=p+1}^{0}A^k\sum_{X\in\beta_{\so(N,\R)}}\left(\sum_{j=1}^m\tr(XA^m)\right) XA^{p-k}\\
	&=-\sum_{k=p+1}^{0}mA^k\left(\sum_{X\in\beta_{\so(N,\R)}}\tr(XA^m)X\right)A^{p-k}\\
	&=\frac{mp}{N^2}(A^{p-m}-A^{p+m}).
	\end{align*}
	
	Finally, for $m,p<0$, we have 
	\begin{align*}
	\sum_{X\in\beta_{\so(N,\R)}}\widetilde{X}&\tr(A^m)\cdot\widetilde{X}A^p\\
	&=\sum_{X\in\beta_{\so(N,\R)}}\left[-\tr\left(\sum_{j=m+1}^0 A^jXA^{m-j}\right)\left(-\sum_{k=p+1}^{0}A^k XA^{p-k}\right)\right]\\
	&=\sum_{k=p+1}^{0}A^k\sum_{X\in\beta_{\so(N,\R)}}\left(\sum_{j=m+1}^0\tr(XA^m)\right) XA^{p-k}\\
	&=\sum_{k=p+1}^{0}(-m)A^k\left(\sum_{X\in\beta_{\so(N,\R)}}\tr(XA^m)X\right)A^{p-k}\\
	&=\frac{(-m)(-p)}{N^2}(A^{p-m}-A^{p+m}).
	\end{align*}
	This proves (\ref{eq:XtrAmXAp}).
\end{proof}

\subsection{Intertwining formulas for $\Delta_{\SO(N,\R)}$} \label{sec:intertwineSON}

In this section, we describe the trace polynomial functional calculus that will be necessary for the proof of Theorem \ref{thm:intertwine1}, which contains the intertwining formulas for $\Delta_{\SO(N,\R)}$, $\Delta_{\SU(N)}$, and $\Delta_{\Sp(N)}$. The space of trace polynomials was first introduced in \cite[Definition 1.7]{DHK:U(N)}, where it was used to prove analogous intertwining results for $\Delta_{\U(N)}$. Using this framework, we then prove Theorem \ref{thm:intertwine1} for the $\SO(N,\R)$ case. 

The computations in this section are related to, but not quite the same as, those used to establish the intertwining formulas for $\Delta_{\U(N)}$ in \cite[Sections 3-4]{DHK:U(N)}.

\begin{definition} \label{def:tracepolys}
	Let $\C[u,u^{-1}]$ denote the algebra of {\bf Laurent polynomials} in a single variable $u$:
	\begin{equation}
	\C[u,u^{-1}]=\left\{\sum_{k\in\Z} a_ku^k\, :\, a_k\in\C,a_k=0\mbox{ for all but finitely many }k\right\},
	\end{equation}
	with the usual polynomial multiplication. Let $\C[u]$ and $\C[u^{-1}]$ denote the subalgebras consisting of polynomials in $u$ and $u^{-1}$, respectively. 
	
	Let $\C[\mathbf{v}]$ denote the algebra of polynomials in countably many commuting variables $\mathbf{v}=\{v_{\pm 1},v_{\pm 2},\hdots \}$, and let $\C[u,u^{-1};\mathbf{v}]$ denote the algebra of polynomials in the commuting variables $\{u,u^{-1},v_{\pm 1},v_{\pm 2},\hdots\}$. 
	
	We now define the {\bf trace polynomial functional calculus}: for $P\in\Cuuv$, we have
	\begin{align*}
	P_N^{(1)}(A)&=[P^{(1)}]_N(A):=P(u;\mathbf{v})|_{u=A,v_k=\tr(A^k),k\neq 0},\hspace{.5cm}\mbox{$A\in\SO(N,\R)$ or $\SO(N,\C)$},\\
	P_N^{(2)}(A)&=[P^{(2)}]_N(A):=P(u;\mathbf{v})|_{u=A,v_k=\tr(A^k),k\neq 0},\hspace{.5cm}\mbox{$A\in\SU(N)$ or $\SL(N,\C)$},\\
	P_N^{(4)}(A)&=[P^{(4)}]_N(A):=P(u;\mathbf{v})|_{u=A,v_k=\wtr(A^k),k\neq 0},\hspace{.5cm}\mbox{$A\in\Sp(N)$ or $\Sp(N,\C)$}.
	\end{align*}
	Functions of the form $P_N^{(\beta)}$, where $P\in\Cuuv$, are called {\bf trace polynomials}. We will often suppress the superscripts for $P_N^{(\beta)}$ and write $P_N$ when it is clear from context which version of the trace polynomial functional calculus is being used. 
\end{definition}
As an illustrative example, the trace  polynomial $P_N^{(1)}=P_N$ on $\SO(N,\R)$ associated with $P(u;\mathbf{v})=v_1v_{-3}^6u^2-3v_5^2$ is $$P_N(A)=\tr(A)\tr(A^{-3})^6A^2-3\tr(A^5)^2I_N.$$

We now introduce the pseudodifferential operators on $\Cuuv$ that appear in our intertwining formulas. 

\begin{definition}
	Let $\mathcal{R}^\pm$ denote the \emph{positive} and \emph{negative projection} operators $$\mathcal{R}^+:\C[u,u^{-1};\mathbf{v}]\to\C[u;\mathbf{v}]\hspace{.5cm}\mbox{and}\hspace{.5cm}\mathcal{R}^-:\C[u,u^{-1};\mathbf{v}]\to\C[u^{-1};\mathbf{v}]$$ defined by $$\mathcal{R}^+\left(\sum_{k=-\infty}^\infty u^kq_k(\mathbf{v})\right)=\sum_{k=0}^\infty u^kq_k(\mathbf{v}),\hspace{.5cm}\mathcal{R}^-\left(\sum_{k=-\infty}^\infty u^kq_k(\mathbf{v})\right)=\sum_{k=0-\infty}^{-1} u^kq_k(\mathbf{v}).$$
	
	In addition, for any $k\in\Z$, let $\mathcal{M}_{u^k}$ denote the multiplication operator defined by $\mathcal{M}_{u^k}P(u;\mathbf{v})=u^kP(u;\mathbf{v})$. 
\end{definition}

\begin{definition}\label{def:Doperators}
	Define the following operators on $\C[u,u^{-1};\mathbf{v}]$:
	\begin{align}
	\mathcal{N}&=\mathcal{N}_0+\mathcal{N}_1=\sum_{|k|\geq 1}|k|v_k\frac{\partial}{\partial v_k}+u\frac{\partial}{\partial u}(\mathcal{R}^+-\mathcal{R}^-),\\
	\mathcal{Y}_1&=\mathcal{Y}_1^+-\mathcal{Y}_1^-=\sum_{k=1}^\infty v_ku\mathcal{R}^+\frac{\partial}{\partial u}\mathcal{M}_{u^{-k}}\mathcal{R}^+-\sum_{k=-\infty}^{-1} v_ku\mathcal{R}^-\frac{\partial}{\partial u}\mathcal{M}_{u^{-k}}\mathcal{R}^-,\\
	\mathcal{Y}_2&=\mathcal{Y}_2^+-\mathcal{Y}_2^-=\sum_{k=2}^\infty\left(\sum_{j=1}^{k-1}jv_jv_{k-j}\right)\frac{\partial}{\partial v_k}-\sum_{k=-\infty}^{-2}\left(\sum_{j=k+1}^{-1}jv_jv_{k-j}\right)\frac{\partial}{\partial v_k},\\
	\mathcal{Z}_1&=\mathcal{Z}_1^+-\mathcal{Z}_1^-=\sum_{k=2}^\infty\left(\sum_{j=1}^{k-1}(k-j)v_{k-2j}\right)\frac{\partial}{\partial v_k}-\sum_{k=-\infty}^{-2}\left(\sum_{j=k+1}^{-1}(k-j)v_{k-2j}\right)\frac{\partial}{\partial v_k},\\
	\mathcal{Z}_2&=\mathcal{Z}_2^+-\mathcal{Z}_2^-=\sum_{k=1}^\infty u^{-k+1}\mathcal{R}^+\frac{\partial}{\partial u}\mathcal{M}_{u^{-k}}\mathcal{R}^+-\sum_{k=-\infty}^{-1} u^{-k+1}\mathcal{R}^-\frac{\partial}{\partial u}\mathcal{M}_{u^{-k}}\mathcal{R}^-,\\
	\mathcal{K}_1&=\mathcal{K}_1^+-\mathcal{K}_1^-=\sum_{|k|\geq 1}ku^{-k+1}\frac{\partial^2}{\partial v_k\partial u}-\sum_{|k|\geq 1}ku^{k+1}\frac{\partial^2}{\partial v_k\partial u},\\
	\mathcal{K}_2&=\mathcal{K}_2^+-\mathcal{K}_2^-=\sum_{|j|,|k|\geq 1}jkv_{k-j}\frac{\partial^2}{\partial v_j\partial v_k}-\sum_{|j|,|k|\geq 1}jkv_{k+j}\frac{\partial^2}{\partial v_j\partial v_k},\\
	\mathcal{J}&=2\sum_{|k|\geq 1}ku\frac{\partial}{\partial u\partial v_k}+\sum_{|j|,|k|\geq 1}jkv_jv_k\frac{\partial^2}{\partial v_j\partial v_k}+\sum_{|k|\geq 1}k^2v_k\frac{\partial}{\partial v_k}+u\frac{\partial^2}{\partial u^2}+u\frac{\partial}{\partial u}.
	\end{align}
	We set
	\begin{align}
	&\mathcal{L}_0=-\mathcal{N}-2\mathcal{Y}_1-2\mathcal{Y}_2,\\
	&\mathcal{L}_1^{(1)}=\mathcal{N}+2\mathcal{Z}_1+2\mathcal{Z}_2,\hspace{.75cm}\mathcal{L}_2^{(1)}=2\mathcal{K}_1+\mathcal{K}_2,\\
	&\mathcal{L}_1^{(2)}=0,\hspace{.75cm}\mathcal{L}_2^{(2)}=-2\mathcal{K}_1^{-}-\mathcal{K}_2^{-}+\mathcal{J},\\
	&\mathcal{L}_1^{(4)}=-\frac{1}{2}\mathcal{L}_1^{(1)},\hspace{.75cm}\mathcal{L}_2^{(4)}=\frac{1}{4}\mathcal{L}_2^{(1)}.
	\end{align}
	
	For $\beta\in\{1,2,4\}$, we define
	\begin{equation}
	\DNbeta=\Lzerobeta +\frac{1}{N}\Lonebeta+\frac{1}{N^2}\Ltwobeta.
	\end{equation}
\end{definition}

\begin{notation}
	For $m\in\Z$ and $A\in\MNC$, let $W_m(A)=A^m$, $V_m(A)=\tr(A^m)$, and $\mathbf{V}(A)=\{V_m(A)\}_{|m|\geq 1}$. The functions $W_m, V_m,$ and $\mathbf{V}$ implicitly depend on $N$, but we suppress the index, which should not cause confusion. 
\end{notation}

\begin{theorem}[Partial product rule] \label{thm:partialprodrule}
	Let $\alpha,\beta\in\C$. For $P\in\C[u,u^{-1};\mathbf{v}]$ and $Q\in\C[\mathbf{v}]$, the operator $\alpha \mathcal{L}_0+\beta\mathcal{L}_1^{(1)}$ satisfies
	\begin{align}
	\left(\alpha\mathcal{L}_0+\beta\mathcal{L}_1^{(1)}\right)(PQ)&=\left(\left(\alpha\mathcal{L}_0+\beta\mathcal{L}_1^{(1)}\right)P\right)Q+P\left(\left(\alpha\mathcal{L}_0+\beta\mathcal{L}_1^{(1)}\right) Q\right).\label{eq:prodruleL0L1}
	\end{align}
\end{theorem}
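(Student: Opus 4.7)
The plan is to verify the identity operator-by-operator, after decomposing $\alpha\mathcal{L}_0 + \beta\mathcal{L}_1^{(1)}$ via $\mathcal{L}_0 = -\mathcal{N} - 2\mathcal{Y}_1 - 2\mathcal{Y}_2$ and $\mathcal{L}_1^{(1)} = \mathcal{N} + 2\mathcal{Z}_1 + 2\mathcal{Z}_2$. By linearity it suffices to establish the Leibniz identity $T(PQ) = (TP)Q + P(TQ)$ for each of $T \in \{\mathcal{N}_0, \mathcal{N}_1, \mathcal{Y}_1^\pm, \mathcal{Y}_2^\pm, \mathcal{Z}_1^\pm, \mathcal{Z}_2^\pm\}$ whenever $P \in \Cuuv$ and $Q \in \C[\mathbf{v}]$.

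The key structural observation, which I would state as a short lemma, is that because $Q$ has no $u$-dependence, each of the operations $\partial_u$, $\mathcal{M}_{u^{-k}}$, and $\mathcal{R}^\pm$ commutes with right-multiplication by $Q$. Indeed, writing $P = \sum_{k \in \Z} u^k p_k(\mathbf{v})$ as a finite sum gives $PQ = \sum_k u^k (p_k Q)$, so $\mathcal{R}^\pm(PQ) = (\mathcal{R}^\pm P)Q$, $\mathcal{M}_{u^{-k}}(PQ) = (\mathcal{M}_{u^{-k}} P)Q$, and $\partial_u(PQ) = (\partial_u P)Q$. Moreover $\mathcal{R}^+ Q = Q$ and $\mathcal{R}^- Q = 0$.

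With this lemma, the verification splits into three cases. \emph{First}, the operators $\mathcal{N}_0$, $\mathcal{Y}_2^\pm$, $\mathcal{Z}_1^\pm$ are each first-order differential operators in the $v_k$ variables with polynomial coefficients and no constant term, so they obey the ordinary Leibniz rule automatically. \emph{Second}, for $\mathcal{N}_1 = u\partial_u(\mathcal{R}^+ - \mathcal{R}^-)$, the commutation lemma gives $\mathcal{N}_1(PQ) = (\mathcal{N}_1 P)Q$, while $\mathcal{N}_1 Q = u\partial_u Q = 0$; thus the identity holds with vanishing second term. \emph{Third}, for $\mathcal{Y}_1^\pm$ and $\mathcal{Z}_2^\pm$, repeated application of the commutation lemma yields $T(PQ) = (TP)Q$, and one checks $TQ = 0$ directly: on the $+$ branch, $\mathcal{R}^+ \partial_u \mathcal{M}_{u^{-k}} \mathcal{R}^+ Q = \mathcal{R}^+(-k u^{-k-1} Q) = 0$ since $-k-1 < 0$ for $k \geq 1$, while on the $-$ branch the computation terminates at the first step because $\mathcal{R}^- Q = 0$.

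Summing the three cases recovers the product rule for each of $\mathcal{N}$, $\mathcal{Y}_1$, $\mathcal{Y}_2$, $\mathcal{Z}_1$, $\mathcal{Z}_2$, and therefore for every linear combination $\alpha\mathcal{L}_0 + \beta\mathcal{L}_1^{(1)}$, which proves \eqref{eq:prodruleL0L1}. The main (and essentially only) obstacle is the careful formulation of the commutation lemma; once that is in place, the rest is bookkeeping. I would emphasize in passing that this is precisely the reason the theorem excludes $\mathcal{L}_2^{(1)} = 2\mathcal{K}_1 + \mathcal{K}_2$: those operators involve second-order cross derivatives in $u$ and $v_k$, and hence produce an irreducible cross term $(\partial_u P)(\partial_{v_k} Q)$ that violates any such identity.
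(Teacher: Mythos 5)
Your proposal is correct and follows the same strategy as the paper's own proof: decompose $\alpha\mathcal{L}_0 + \beta\mathcal{L}_1^{(1)}$ into the summands $\mathcal{N}_0, \mathcal{N}_1, \mathcal{Y}_1, \mathcal{Y}_2, \mathcal{Z}_1, \mathcal{Z}_2$, note that $\mathcal{N}_0, \mathcal{Y}_2, \mathcal{Z}_1$ are first-order differential operators in the $v_k$'s and hence satisfy the Leibniz rule, and note that $\mathcal{N}_1, \mathcal{Y}_1, \mathcal{Z}_2$ annihilate $\C[\mathbf{v}]$ and satisfy $T(PQ)=(TP)Q$. You have simply supplied the details (the commutation lemma and the explicit verification that $TQ=0$) that the paper leaves implicit, plus a correct aside about why $\mathcal{L}_2^{(1)}$ must be excluded.
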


\begin{proof}
	From Definition \ref{def:Doperators}, recall that $$\mathcal{L}_0=-(\mathcal{N}_0+\mathcal{N}_1)-2\mathcal{Y}_1-2\mathcal{Y}_2,\hspace{.5cm}\mathcal{L}_1^{(1)}=\mathcal{N}_0+\mathcal{N}_1+2\mathcal{Z}_1+2\mathcal{Z}_2.$$ Observe that $\mathcal{N}_0$, $\mathcal{Y}_2$, $\mathcal{Z}_1$ are first order differential operators on $\C[\mathbf{v}]$, and so satisfy the product rule on $\C[u,u^{-1};\mathbf{v}]$. On the other hand, $\mathcal{N}_1$, $\mathcal{Y}_1,$ and $\mathcal{Z}_2$ annihilate $\C[\mathbf{v}]$ and satisfy, for $P\in\C[u,u^{-1};\mathbf{v}]$ and $Q\in\C[\mathbf{v}],$
	\begin{align*}
	\mathcal{N}_1(PQ)&=(\mathcal{N}_1P)Q,\hspace{.5cm}\mathcal{Y}_1(PQ)=(\mathcal{Y}_1P)Q,\hspace{.5cm}\mathcal{Z}_2(PQ)=(\mathcal{Z}_2P)Q.
	\end{align*}
	Putting this together shows (\ref{eq:prodruleL0L1}).
\end{proof}

\begin{definition}
	The {\bf trace degree} of a monomial in $\Cuuv$ is defined to be
	\begin{equation}
	\deg\left(u^{k_0}v_1^{k_1}v_{-1}^{k_{-1}}\cdots v_m^{k_m}v_{-m}^{k_{-m}}\right)=|k_0|+\sum_{1\leq |j|\leq m}|j|k_j.
	\end{equation}
	We define the trace degree of any element in $\Cuuv$ to be the highest trace degree of any of its monomial terms. For $m\geq 0$, we define the subspace $\C_m[u,u^{-1};\mathbf{v}]\subseteq\Cuuv$ to be
	\begin{equation}
	\C_m[u,u^{-1};\mathbf{v}]=\{P\in\Cuuv\,:\,\deg P\leq m\},
	\end{equation}
	consisting of polynomials of trace degree $\leq m$. Note that $\C_m[u,u^{-1};\mathbf{v}]$ is finite-dimensional and $\Cuuv=\bigcup_{m\geq 0}\C_m[u,u^{-1};\mathbf{v}]$. 
\end{definition}

\begin{cor} \label{cor:DNinvariant}
	Let $m,N\in\N$ and $\alpha,\beta\in\C$. The operators  $\alpha\mathcal{L}_0+\beta\mathcal{L}_1^{(1)}$, $\mathcal{D}_N^{(1)}$, $\mathcal{D}_N^{(2)}$, and $\mathcal{D}_N^{(4)}$ preserve the finite dimensional subspace $\C_m[u,u^{-1};\mathbf{v}]\subseteq\Cuuv$. It follows that  $e^{\alpha\mathcal{L}_0+\beta\mathcal{L}_1^{(1)}}$, $e^{\alpha\mathcal{D}_N^{(1)}}$, $e^{\alpha\mathcal{D}_N^{(2)}}$, and $e^{\alpha\mathcal{D}_N^{(4)}}$ are well-defined operators on each $\C_m[u,u^{-1};\mathbf{v}]$ (via power series, cf. Remark \ref{rem:SBpowerseries}) and hence are well-defined on all of $\Cuuv$. 
\end{cor}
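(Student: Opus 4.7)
The plan is to prove the corollary in two conceptual steps: first, establish that each of the four listed operators restricts to a linear endomorphism of the finite-dimensional subspace $\Cmuuv$; second, invoke elementary linear algebra to deduce that the exponentials are then well-defined on each $\Cmuuv$ and assemble into operators on all of $\Cuuv$.

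For the first step, I would verify by direct inspection on a monomial $u^{k_0} v_1^{k_1} v_{-1}^{k_{-1}} \cdots v_m^{k_m} v_{-m}^{k_{-m}}$ of trace degree $d = |k_0| + \sum_j |j|\, k_j$ that each of the constituent pseudodifferential operators $\mathcal{N}$, $\mathcal{Y}_j^\pm$, $\mathcal{Z}_j^\pm$, $\mathcal{K}_j^\pm$, and $\mathcal{J}$ from Definition \ref{def:Doperators} preserves or strictly decreases trace degree. The key observations are: (a) $\mathcal{N}_0 = \sum_{|k|\geq 1}|k| v_k \partial_{v_k}$ and $\mathcal{N}_1 = u\partial_u(\mathcal{R}^+ - \mathcal{R}^-)$ act diagonally on monomials (with eigenvalues $\sum |k| k_j$ and $|k_0|$, respectively) and hence preserve trace degree; (b) for each summand of $\mathcal{Y}_j^\pm$, $\mathcal{Z}_j^\pm$, and $\mathcal{K}_j^\pm$, a differentiation in $v_k$ or $u$ removes a factor from the monomial (reducing the trace degree by $|k|$ or by $1$), and the paired multiplicative factor shifts the exponent according to $k_0 \mapsto k_0 \pm k$ or $v_j \mapsto v_{j \pm k}$; in every case the triangle inequality $|a \pm b| \leq |a| + |b|$ guarantees that the output trace degree does not exceed the input trace degree; (c) the $v_j v_k\, \partial^2_{v_j v_k}$, $v_k\partial_{v_k}$, and Euler-type $u$-pieces of $\mathcal{J}$ all act diagonally on monomials. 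Combining these via linearity, $\mathcal{L}_0$, $\mathcal{L}_1^{(\beta)}$, and $\mathcal{L}_2^{(\beta)}$ all preserve $\Cmuuv$, and hence so do $\alpha\mathcal{L}_0 + \beta\mathcal{L}_1^{(1)}$ and each $\DNbeta$.

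For the second step, any linear endomorphism $T : V \to V$ of a finite-dimensional vector space admits a well-defined exponential $e^{\alpha T} = \sum_{n \geq 0}(\alpha T)^n/n!$: in any basis the series reduces to an ordinary matrix exponential, which is absolutely norm-convergent and in fact equals a polynomial in $T$ by the Cayley--Hamilton theorem. Applying this with $V = \Cmuuv$ yields the desired exponentials on each $\Cmuuv$. For $m \leq m'$, the two exponentials agree on the smaller subspace because both are defined by the same formal power series, so they assemble into a single well-defined linear operator on $\Cuuv = \bigcup_m \Cmuuv$, as every element of $\Cuuv$ lies in some $\Cmuuv$.

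The only real obstacle is the casework in step one: the second-order operators $\mathcal{K}_j^\pm$ and $\mathcal{J}$ require careful tracking of monomial shifts in both $u$ and $v_k$ (with both positive and negative exponents), but the triangle inequality resolves each case uniformly. No analytic or deeper algebraic machinery is needed.
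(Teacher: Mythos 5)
Your proof is correct and follows the same route as the paper's own argument: verify that each of the building-block pseudodifferential operators from Definition \ref{def:Doperators} does not raise trace degree, so that $\Cmuuv$ is preserved, and then exponentiate on the resulting finite-dimensional invariant subspaces. You supply somewhat more detailed casework (the triangle-inequality bookkeeping of $u$- and $v_k$-exponent shifts) than the paper's terse ``keeping track of multiplication and differentiation operators'' remark, but the underlying idea is identical.
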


\begin{proof}
	Recall that $\alpha\mathcal{L}_0+\beta\mathcal{L}_1^{(1)}$, $\mathcal{D}_N^{(1)}$, $\mathcal{D}_N^{(2)}$, and $\mathcal{D}_N^{(4)}$ are linear combinations of the operators $\mathcal{J}$, $\mathcal{N}$, $\mathcal{Y}_1$, $\mathcal{Y}_2$, $\mathcal{Z}_1$, $\mathcal{Z}_2$, $\mathcal{K}_1$, and $\mathcal{K}_2$ described in Definition \ref{def:Doperators}. Each of these operators, in turn, are linear combinations of compositions of multiplication, differentiation, and projection operators. The multiplication and differentiation operators raise and lower trace degree, respectively, while projection operators $\mathcal{R}^+$ and $\mathcal{R}^-$ leaves $\C_m[u,u^{-1};\mathbf{v}]$ invariant. Keeping track of multiplication and differentiation operators in $\mathcal{J}$, $\mathcal{N}$, $\mathcal{Y}_1$, $\mathcal{Y}_2$, $\mathcal{Z}_1$, $\mathcal{Z}_2$, $\mathcal{K}_1$, and $\mathcal{K}_2$ shows that these operators preserve trace degree. Hence  $\alpha\mathcal{L}_0+\beta\mathcal{L}_1^{(1)}$, $\mathcal{D}_N^{(1)}$, $\mathcal{D}_N^{(2)}$, and $\mathcal{D}_N^{(4)}$ preserve the finite dimensional subspace $\C_m[u,u^{-1};\mathbf{v}]\subseteq\Cuuv$, and the result for  $e^{\alpha\mathcal{L}_0+\beta\mathcal{L}_1^{(1)}}$, $e^{\alpha\mathcal{D}_N^{(1)}}$, $e^{\alpha\mathcal{D}_N^{(2)}}$, and $e^{\alpha\mathcal{D}_N^{(4)}}$ follows. 
\end{proof}

\begin{cor} \label{cor:exppartialprodrule}
	For any $P\in\Cuuv$, $Q\in\C[\mathbf{v}]$, and $\alpha,\beta,\tau\in\C$,
	\begin{align}
	e^{\frac{\tau}{2}(\alpha\mathcal{L}_0+\beta\mathcal{L}_1^{(1)})}(PQ)&=e^{\frac{\tau}{2}(\alpha\mathcal{L}_0+\beta\mathcal{L}_1^{(1)})}P\cdot e^{\frac{\tau}{2}(\alpha\mathcal{L}_0+\beta\mathcal{L}_1^{(1)})}Q.\label{eq:expprodruleL0L1}
	\end{align}
\end{cor}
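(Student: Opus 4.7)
Set $L := \alpha \mathcal{L}_0 + \beta \mathcal{L}_1^{(1)}$. The plan is to use the standard principle that a derivation exponentiates to a homomorphism, adapted to the ``partial'' derivation property established in Theorem \ref{thm:partialprodrule}. The first step is to verify that $L$ preserves the subalgebra $\C[\mathbf{v}] \subset \Cuuv$; this invariance is what allows the partial product rule to be applied iteratively. Inspecting Definition \ref{def:Doperators}, the pieces $\mathcal{N}_0$, $\mathcal{Y}_2$, $\mathcal{Z}_1$ are differential operators in the $v$-variables alone and trivially preserve $\C[\mathbf{v}]$. The remaining pieces $\mathcal{N}_1$, $\mathcal{Y}_1^\pm$, $\mathcal{Z}_2^\pm$ each contain a factor $\partial/\partial u$ followed by an $\mathcal{R}^{\pm}$ projection; applied to a $Q \in \C[\mathbf{v}]$ (which is independent of $u$), a one-line computation shows they all vanish.

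Next, I will establish by induction on $n$ the Leibniz formula
\[
L^n(PQ) = \sum_{k=0}^n \binom{n}{k} (L^k P)(L^{n-k} Q), \qquad P \in \Cuuv,\ Q \in \C[\mathbf{v}].
\]
The base case $n=0$ is trivial. For the inductive step, rewrite $L^{n+1}(PQ) = L\bigl(L^n(PQ)\bigr)$, apply Theorem \ref{thm:partialprodrule} to each summand $(L^k P)(L^{n-k} Q)$ — legal precisely because $L^{n-k} Q \in \C[\mathbf{v}]$ by the first step — and collect the resulting terms using Pascal's identity.

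Finally, I will combine the Leibniz formula with the power series expansion of $e^{\frac{\tau}{2} L}$. By Corollary \ref{cor:DNinvariant}, $L$ preserves the finite-dimensional subspace $\Cmuuv$ containing $PQ$, so the defining series for $e^{\frac{\tau}{2}L}(PQ)$ converges absolutely in this finite-dimensional space. Substituting the Leibniz formula into the series and rearranging as a Cauchy product yields
\[
e^{\frac{\tau}{2} L}(PQ) = \bigl(e^{\frac{\tau}{2} L} P\bigr)\bigl(e^{\frac{\tau}{2} L} Q\bigr),
\]
which is the desired identity. The only mildly subtle point is making sure the partial product rule keeps applying throughout the induction, which is exactly why the $\C[\mathbf{v}]$-invariance of $L$ is needed; the rest is formal bookkeeping and absolutely convergent rearrangement in a finite-dimensional space.
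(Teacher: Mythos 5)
Your proof is correct and follows essentially the same route as the paper: verify that $L$ preserves $\C[\mathbf{v}]$, iterate the partial product rule to obtain a binomial Leibniz formula for $L^n(PQ)$, and rearrange the power series as a Cauchy product on the finite-dimensional invariant subspace. The paper compresses the induction into the phrase ``applying (\ref{eq:prodruleL0L1}) repeatedly'' and folds the $\C[\mathbf{v}]$-invariance check into the final remark, but the argument is the same.
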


\begin{proof}
	Suppose $\mathcal{L}$ is a linear operator on $\Cuuv$ which leaves $\C[\mathbf{v}]$ and $\C_m[u,u^{-1};\mathbf{v}]$ invariant and satisfies the partial product rule (\ref{eq:prodruleL0L1}). Applying (\ref{eq:prodruleL0L1}) repeatedly, we get
	\begin{align*}
	\sum_{k=0}^\infty \frac{1}{k!}\mathcal{L}^k(PQ)&=\sum_{k=0}^\infty\frac{1}{k!}\sum_{\ell=0}^{k}\binom{k}{\ell}(\mathcal{L}^\ell P)(\mathcal{L}^{k-\ell}Q)=\sum_{k=0}^\infty\sum_{\ell=0}^{k}\frac{1}{\ell!(k-\ell)!}(\mathcal{L}^\ell P)(\mathcal{L}^{k-\ell}Q)\\
	&=\sum_{j,k=0}^\infty \frac{1}{j!k!}(\mathcal{L}^j P)(\mathcal{L}^{k}Q)=\left(\sum_{j=0}^\infty\frac{1}{j!}\mathcal{L}^j P\right)\left(\sum_{k=0}^\infty\frac{1}{k!}\mathcal{L}^k Q\right)\\
	&=e^{\mathcal{L}}P\cdot e^{\mathcal{L}}Q.
	\end{align*}
	By Definition \ref{def:Doperators}, it is clear that $\frac{\tau}{2}\left(\alpha\mathcal{L}_0+\beta\mathcal{L}_1^{(1)}\right)$ leaves $\C[\mathbf{v}]$ and $\C_m[u,u^{-1};\mathbf{v}]$ invariant, so setting $\mathcal{L}=\frac{\tau}{2}\left(\alpha\mathcal{L}_0+\beta\mathcal{L}_1^{(1)}\right)$ concludes the proof. 
\end{proof}

We can now prove our main intertwining result for $\Delta_{\SO(N,\R)}$. 

\begin{proof}[Proof of Theorem \ref{thm:intertwine1} for $\SO(N,\R)$] \label{pf:thm:intertwine1}
	It suffices to show that $\Delta_{\SO(N,\R)}P_N=[\mathcal{D}_NP]_N$ holds for $P(u;\mathbf{v})=u^mq(\mathbf{v})$, where $m\in\Z\setminus\{0\}$ and $q\in\C[\mathbf{v}]$. Then $P_N=W_m\cdot q(\mathbf{V})$, and by the product rule, 
	\begin{align}
	\Delta_{\SO(N,\R)}P_N&=\sum_{X\in\beta_{\so(N,\R)}}\wX^2(W_m\cdot q(\mathbf{V}))\nonumber\\
	&=\sum_{X\in\beta_{\so(N,\R)}}\wX\left((\wX W_m)\cdot q(\mathbf{V})+W_m\cdot\wX q(\mathbf{V})\right) \nonumber\\
	&=\sum_{X\in\beta_{\so(N,\R)}} (\wX^2 W_m)\cdot q(\V)+2\wX W_m\cdot \wX q(\V)+W_m\cdot \wX^2 q(\V) \nonumber\\
	&=(\Delta_{\SO(N,\R)}W_m)\cdot q(\V)\nonumber\\
	&\hspace{1cm}+2\sum_{X\in\beta_{\so(N,\R)}}\wX W_m\cdot \wX q(\V)+W_m\cdot (\Delta_{\SO(N,\R)}q(\V)) \label{eq:LaplacianPN}
	\end{align}
	
	For the first term in (\ref{eq:LaplacianPN}), we consider the cases $m\geq 0$ and $m<0$ separately. For $m\geq 0$, we apply (\ref{eq:DAmpos}) to get
	\begin{align}
	(\LSON & W_m)\cdot q(\V)\nonumber\\
	&=-\frac{m(N-1)}{N}W_m\cdot q(\V)\nonumber\\
	&\hspace{.5cm}+2\Ind_{m\geq 2}\left[\frac{1}{N}\sum_{k=1}^{m-1}(m-k)W_{m-2k}-\sum_{k=1}^{m-1}(m-k)V_kW_{m-k}\right]q(\V)\nonumber\\
	&=-\frac{N-1}{N}\left[u\p{u}\mathcal{R}^+P\right]_N+\frac{2}{N}\left[\left(\sum_{k=1}^\infty u^{-k+1}\mathcal{R}^+\p{u}\mathcal{M}_{u^{-k}}\mathcal{R}^+P\right)\right]_N \nonumber\\
	&\hspace{1.5cm}-2\left[\left(\sum_{k=1}^\infty v_ku\mathcal{R}^+\p{u}\mathcal{M}_{u^{-k}}\mathcal{R}^+\right)P\right]_N\nonumber \nonumber\\
	&=-\frac{N-1}{N}\left[u\p{u}\mathcal{R}^+P\right]_N+\frac{2}{N}[\mathcal{Z}_2^+P]_N-2[\mathcal{Y}_1^+P]_N.\label{eq:DWmqVpos}
	\end{align}
	A similar computation, using (\ref{eq:DAmneg}), shows that for $m<0$, 
	\begin{align*}
	(\LSON W_m)\cdot q(\V)&=\frac{N-1}{N}\left[u\p{u}\mathcal{R}^-P\right]_N-\frac{2}{N}[\mathcal{Z}_2^-P]_N+2[\mathcal{Y}_1^-P]_N.
	\end{align*}
	We observe that $\mathcal{Y}_1^+,\mathcal{Z}_2^+$ annihilates $\C[u^{-1}]$ while $\mathcal{Y}_1^-,\mathcal{Z}_2^-$ annihilates $\C[u]$, so for all $m\in\Z$, 
	\begin{align}
	(\LSON W_m)\cdot q(\V)&=-\frac{N-1}{N}[\mathcal{N}_1P]_N+\frac{2}{N}[\mathcal{Z}_2P]_N-2[\mathcal{Y}_1P]_N.\label{eq:DWmqV}
	\end{align}
	
	Next, by the chain rule and (\ref{eq:XtrAmXAp}), the middle term in (\ref{eq:LaplacianPN}) is
	\begin{align}
	\sum_{X\in\bSON}\wX& W_m\cdot\wX q(\V)\nonumber\\
	&=\sum_{X\in\bSON}\wX W_m\cdot\sum_{|k|\geq 1}\left(\p{v_k}q\right)(\V)\cdot\wX V_k\nonumber\\
	&=\sum_{|k|\geq 1}\left(\sum_{X\in\bSON} \wX W_m\cdot\wX V_k\right)\left(\p{v_k}q\right)(\V)\nonumber\\
	&=\sum_{|k|\geq 1}\frac{mk}{N^2}(W_{m-k}-W_{m+k})\left(\p{v_k}q\right)(\V)\nonumber\\
	&=\frac{1}{N^2}\sum_{|k|\geq 1}k\left(W_{-k+1}\left[\p{u}u^m\right]_N-W_{k+1}\left[\p{u}u^m\right]_N\right)\left(\p{v_k}q\right)(\V)\label{eq:mWmj}\\
	&=\frac{1}{N^2}\left[\left(\sum_{|k|\geq 1} k(u^{-k+1}-u^{k+1})\pp{u}{v_k}\right) P\right]_N\nonumber\\
	&=\frac{1}{N^2}[\K_1 P]_N.\label{eq:XWmXqV}
	\end{align}
	where we have used the fact that $mW_{m+j}=W_{j+1}\left[\p{u}u^m\right]_N$ in line (\ref{eq:mWmj}).
	
	Finally, for the last term in (\ref{eq:LaplacianPN}), we have, for each $X\in\bSON$, 
	\begin{align}
	\wX^2 q(\V)&=\wX\left(\sum_{|k|\geq 1}\left(\p{v_k}q\right)(\V)\cdot \wX V_k\right)\nonumber\\
	&=\sum_{|k|\geq 1}\left(\p{v_k}q\right)(\V)\cdot\wX^2 V_k+\sum_{|j|,|k|\geq 1}\left(\pp{v_j}{v_k}q\right)(\V)\cdot \wX V_j\cdot \wX V_k.\label{eq:X2qV}
	\end{align}
	
	In summing the first term in (\ref{eq:X2qV}) over $X\in\bSON$, we break up the sum for positive and negative $k$. Using (\ref{eq:DAmpos}) and (\ref{eq:DAmneg}), we have
	\begin{align}
	\sum_{X\in\bSON}&\sum_{k=1}^\infty\left(\p{v_k}q\right)(\V)\cdot\wX^2 V_k\nonumber\\
	&=-\frac{N-1}{N}\sum_{k=1}^\infty kV_k\left(\p{v_k}q\right)(\V)+\frac{2}{N}\sum_{k=2}^\infty\sum_{\ell=1}^{k-1}(k-\ell)V_{k-2\ell}\left(\p{v_k}q\right)(\V)\nonumber\\
	&\hspace{1cm}-2\sum_{k=2}^\infty\sum_{\ell=1}^{k-1}(k-\ell)V_\ell V_{k-\ell}\left(\p{v_k}q\right)(\V)\label{eq:N0Z1Y2pos}
	\end{align}
	and
	\begin{align}
	\sum_{X\in\bSON}&\sum_{k=-\infty}^{-1}\left(\p{v_k}q\right)(\V)\cdot\wX^2 V_k \nonumber\\
	&=\frac{N-1}{N}\sum_{k=-\infty}^{-1} kV_k\left(\p{v_k}q\right)(\V)-\frac{2}{N}\sum_{k=-\infty}^{-2}\sum_{\ell=k+1}^{-1}(k-\ell)V_{k-2\ell}\left(\p{v_k}q\right)(\V)\nonumber\\
	&\hspace{1cm}-2\sum_{k=-\infty}^{-2}\sum_{\ell=k+1}^{-1}(-k+\ell)V_\ell V_{-k-\ell}\left(\p{v_k}q\right)(\V).\label{eq:N0Z1Y2neg}
	\end{align}
	
	Summing the second term in (\ref{eq:X2qV}) over $X\in\bSON$ and using (\ref{eq:XtrAmXAp}) yields
	\begin{align}
	\sum_{X\in\bSON}&\sum_{|j|,|k|\geq 1}\left(\pp{v_j}{v_k}q\right)(\V)\cdot \wX V_j\cdot \wX V_k\nonumber\\
	&=\frac{1}{N^2}\sum_{|j|,|k|\geq 1}jk(V_{k-j}-V_{k+j})\left(\pp{v_j}{v_k}q\right)(\V)\label{eq:K2}
	\end{align}
	Adding (\ref{eq:N0Z1Y2pos}), (\ref{eq:N0Z1Y2neg}), and (\ref{eq:K2}) together and multiplying by $W_m$, we get
	\begin{align}
	W_m\cdot (\LSON q(\V))&=\left[\left(-\frac{N-1}{N}\mathcal{N}_0+\frac{2}{N}\mathcal{Z}_1-2\mathcal{Y}_2+\frac{1}{N^2}\mathcal{K}_2\right)P\right]_N \label{eq:WmDqV}
	\end{align}
	
	Combining (\ref{eq:DWmqV}), (\ref{eq:XWmXqV}), and (\ref{eq:WmDqV}) proves (\ref{eq:intertwine1}) for $\SO(N,\R)$ ($\beta=1$). Equation (\ref{eq:intertwine1exp}) now follows from Corollary (\ref{cor:DNinvariant}). 
\end{proof}

\begin{remark} \label{rem:intertwineU(N)}
	A similar intertwining formula is proven in \cite{DHK:U(N)} for the $\U(N)$ case, where for any $P\in\Cuuv$,
	\begin{align} \label{eq:intertwineU(N)} 
	\Delta_{\U(N)}P_N&=\left[\left(\mathcal{L}_0-\frac{1}{N^2}(2\mathcal{K}_1^-+\mathcal{K}_2^-)\right)P\right]_N,
	\end{align}
	and, subsequently, for $\tau\in\C$,
	\begin{align} 
	e^{\frac{\tau}{2}\Delta_{\U(N)}}P_N&=[e^{\frac{\tau}{2}(\mathcal{L}_0-\frac{1}{N^2}(2\mathcal{K}_1^-+\mathcal{K}_2^-))}P]_N.\label{eq:intertwineUNexp}
	\end{align}
\end{remark}

\begin{remark} \label{rem:SBpowerseries}
	In this paper, we are primarily concerned with computing the Segal-Bargmann transform on trace polynomials. In this setting, if $D$ is a left-invariant differential operator on $K$, where $K$ is a compact type matrix Lie group, then we can compute the action of $e^D$ on a trace polynomial $f$ via the power series
	\begin{equation}
	(e^{D}f)(x)=\left(\sum_{n=0}^\infty\frac{1}{n!}D^nf\right)(x).
	\end{equation}
	In particular, if $D=\frac{\tau}{2}\Delta_K$, we can define $e^{\frac{\tau}{2}\Delta_K}f$ as a function on $K$ using the power series definition. In \cite{DHK:complextimeSB}, it is shown that $e^{\frac{\tau}{2}\Delta_K}f$ has a unique analytic continuation to $K_\C$. Moreover, this analytic continuation is equal to $B_{s,\tau}^Kf$. 
\end{remark}

\begin{proof}[Proof of Theorem \ref{thm:SBintertwine} for $\SO(N,\R)$] \label{pf:thm:SBintertwine}
	By the intertwining formula (\ref{eq:intertwine1exp}), we have \[e^{\frac{\tau}{2}\LSON}P_N=[e^{\frac{\tau}{2}\mathcal{D}_N^{(1)}}P]_N.\]  Corollary \ref{cor:DNinvariant} shows that $[e^{\frac{\tau}{2}\mathcal{D}_N^{(1)}}P]_N$ is a trace polynomial on $\SO(N,\R)$, so its analytic continuation to $\SO(N,\C)$ is given by the same trace polynomial function. Hence $[e^{\frac{\tau}{2}\mathcal{D}_N^{(1)}}P]_N$, viewed as a holomorphic function on $\SO(N,\C)$, is the analytic continuation of $e^{\frac{\tau}{2}\LSON}P_N$, and so is equal to $\mathbf{B}_{s,\tau}^{\SO(N,\R)} P_N$. 
\end{proof}

\subsection{Intertwining formulas for $\mathcal{A}^{\SO(N,\C)}_{s,\tau}$}

We now prove an intertwining formula for $\SO(N,\C)$ that is analogous to the intertwining formula for $\SO(N,\R)$ in Theorem \ref{thm:intertwine1}. To do so, we use the word polynomial space introduced in \cite[Notation 3.21]{DHK:U(N)}, which was used to prove a similar intertwining formula for $\GL_N(\C)=\U(N)_{\C}$. 

\begin{notation}
	For $m\in\N$, define $\mathscr{E}_m$ to be the set of words $\varepsilon:\{1,\hdots ,m\}\to\{\pm 1,\pm *\}$. We denote the length of a word $\varepsilon\in\mathscr{E}_m$ by $|\varepsilon|=m$ and set $\mathscr{E}=\cup_{m\geq 0}\mathscr{E}_m$. We define the {\bf word polynomial space} $\mathscr{W}$ to be $$\mathscr{W}=\C\left[\{v_{\varepsilon}\}_{\varepsilon\in\mathscr{E}}\right],$$ the space of polynomials in the commuting variables $\{v_{\varepsilon}\}_{\varepsilon\in\mathscr{E}_m}$. For $j,k\in\Z$ not both zero, we define the words
	\begin{align}
	\varepsilon(j,k)&=\overbrace{(\pm 1,\hdots ,\pm 1,}^{|j|\text{ times}}\overbrace{\pm *,\hdots ,\pm *)}^{|k|\text{ times}}\in\mathscr{E}_{|j|+|k|},
	\end{align}
	where the first $|j|$ slots contain $+1$ if $j>0$ and $-1$ if $j<0$, and the last $|k|$ slots contain $+*$ if $k>0$ and $-*$ if $k<0$. We set $v_{\varepsilon(0,0)}=1$. 
\end{notation}

\begin{notation} \label{not:wordfunctionalcalc}
	For $\varepsilon=(\varepsilon_1,\hdots ,\varepsilon_m)\in\mathscr{E}_m$ and $A\in\SO(N,\C)$, we define $A^\varepsilon=A^{\varepsilon_1}\cdots A^{\varepsilon_m}$, where $A^{+*}:=A^*$ and $A^{-*}:=(A^*)^{-1}$. If $P\in\mathscr{W}$, we define a function $P_N:\SO(N,\C)\to\C$ by $$P_N(A)=P(\mathbf{V}(A)),$$ where $$\mathbf{V}(A)=\{V_\varepsilon (A)\, :\, \varepsilon\in\mathscr{E}\}$$ and $$V_\varepsilon (A)=\tr(A^\varepsilon)=\tr(A^{\varepsilon_1}\cdots A^{\varepsilon_m}).$$
\end{notation}

\begin{notation}
	We define the inclusion maps $\iota,\iota^*:\C[\mathbf{v}]\hookrightarrow\mathscr{W}$, with $\iota$ linear and $\iota^*$ conjugate linear, by
	\begin{equation}
	\iota(v_k)=v_{\varepsilon(k,0)},\hspace{.5cm}\iota^*(v_k)=v_{\varepsilon(0,k)}.
	\end{equation}
	The maps $\iota$ and $\iota^*$ intertwine with the evaluation map in the following way: for $Q\in\C [\mathbf{v}]$,
	\begin{equation} \label{eq:intertwineword}
	[\iota(Q)]_N(A)=Q_N(A),\hspace{.5cm}[\iota^*(Q)]_N(A)=Q_N(A)^*.
	\end{equation}
\end{notation}

\begin{definition}
	The {\bf trace degree} of a monomial $\prod_{j=1}^mv_{\varepsilon_j}^{k_j}\in\mathscr{W}$ is defined to be $$\deg\left(\prod_{j=1}^mv_{\varepsilon_j}^{k_j}\right) =\sum_{j=1}^m |k_j||\varepsilon_j|,$$
	and the trace degree of an arbitrary element of $\mathscr{W}$ is the highest trace degree of any of its monomial terms. 
	
	For each $m\in\N$, we let $\mathscr{W}_m$ denote the finite dimensional subspace of $\mathscr{W}$ $$\mathscr{W}_m=\{P\in\mathscr{W}\,:\,\deg(P)\leq m\}\subseteq\C[\{v_{\varepsilon}\}_{|\varepsilon|\leq m}],$$ so that $\mathscr{W}=\bigcup_{m=1}^\infty\mathscr{W}_m$. 
\end{definition}

\begin{theorem} \label{thm:intertwiningIIpoly}
	Fix $s\in\R$ and $\tau=t+i\theta\in\C$. There are collections $\left\{Q_\varepsilon^{s,\tau}\, :\,\varepsilon\in\mathscr{E}\right\}$, $\left\{R_\varepsilon^{s,\tau}\, :\,\varepsilon\in\mathscr{E}\right\}$, and $\left\{S_{\varepsilon,\delta}^{s,\tau}\, :\,\varepsilon,\delta\in\mathscr{E}\right\}$ in $\mathscr{W}$ such that:
	\begin{enumerate}
		\item for each $\varepsilon\in\mathscr{E}$, $Q_\varepsilon^{s,\tau}$ and $R_\varepsilon^{s,\tau}$ are certain finite sums of monomials of trace degree $|\varepsilon|$ such that
		\begin{equation} \label{eq:ASONstVe}
			\ASONst V_\varepsilon=[Q_\varepsilon^{s,\tau}]_N+\frac{1}{N}[R_{\varepsilon}^{s,\tau}]_N,
		\end{equation}
		\item for $\varepsilon,\delta\in\mathscr{E}$, $S_{\varepsilon,\delta}^{s,\tau}$ is a certain finite sum of monomials of trace degree $|\varepsilon|+|\delta|$ such that
		\begin{equation} \label{eq:Redst}
		\sum_{\ell=1}^{\dSON}\left[\left(s-\frac{t}{2}\right)(\wX_\ell V_\varepsilon)(\wX_\ell V_\delta)+\frac{t}{2}(\wY_\ell V_\varepsilon)(\wY_\ell V_\delta)-\theta(\wX_\ell V_\varepsilon)(\wY_\ell V_\delta)\right]=\frac{1}{N^2}[S_{\varepsilon ,\delta}^{s,\tau}]_N.
		\end{equation}
	\end{enumerate}
\end{theorem}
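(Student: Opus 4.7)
The plan is to prove both parts by direct differential calculus on $\SO(N,\C)$, extending the word-free derivative formulas of Section~\ref{sec:intertwineSON} from powers $A^m$ to arbitrary words $A^\varepsilon$, and then reducing the sums over $\beta_{\so(N,\R)}$ using the magic formulas of Proposition~\ref{prop:magic}. The starting ingredients are the four one-letter derivatives on $\SO(N,\C)$: since $X_\ell\in\so(N,\R)$ satisfies $X_\ell^*=-X_\ell$, a direct curve computation gives
$$\widetilde{X}_\ell A=AX_\ell,\quad \widetilde{X}_\ell A^{-1}=-X_\ell A^{-1},\quad \widetilde{X}_\ell A^*=-X_\ell A^*,\quad \widetilde{X}_\ell(A^*)^{-1}=(A^*)^{-1}X_\ell,$$
together with the $\widetilde{Y}_\ell=\widetilde{iX_\ell}$ analogues, in which the factor $i$ gets complex-conjugated on the antiholomorphic letters $A^{\pm *}$. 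Applying the Leibniz rule across $A^\varepsilon=A^{\varepsilon_1}\cdots A^{\varepsilon_m}$ and cycling $X_\ell$ under the trace yields
$$\widetilde{X}_\ell V_\varepsilon=\sum_{j=1}^{|\varepsilon|}\eta_j\,\tr\bigl(X_\ell A^{\varepsilon^{(j)}}\bigr),\qquad \widetilde{Y}_\ell V_\varepsilon=\sum_{j=1}^{|\varepsilon|}\eta'_j\,\tr\bigl(X_\ell A^{\varepsilon^{(j)}}\bigr),$$
with $\eta_j,\eta'_j\in\{\pm 1,\pm i\}$ determined by the type of $\varepsilon_j$ and $\varepsilon^{(j)}$ the cyclic rotation of $\varepsilon$ at position $j$.

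For part (i), I differentiate once more. Because $V_\varepsilon$ is a \emph{single} trace, each of $\widetilde{X}_\ell^2 V_\varepsilon$, $\widetilde{Y}_\ell^2 V_\varepsilon$, and $\widetilde{X}_\ell\widetilde{Y}_\ell V_\varepsilon$ is a sum over position pairs $(j,k)$ of single traces of the form $\tr(\cdots X_\ell B X_\ell\cdots)$ (when $j\neq k$) or traces involving $X_\ell^2$ (when $j=k$). Summing over $\ell$ and invoking magic formulas \eqref{eq:magic1}--\eqref{eq:magic2} replaces each such trace by $\tfrac{1}{N}\tr(\cdots B^\intercal\cdots)-\tr(B)\tr(\cdots)$ or by $-\tfrac{N-1}{N}V_\varepsilon$, respectively. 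Since $A^\intercal=A^{-1}$ on $\SO(N,\C)$, one has $(A^{\varepsilon'})^\intercal=A^{\overline{\varepsilon'}}$, where $\overline{\varepsilon'}$ reverses $\varepsilon'$ and swaps $\pm 1\leftrightarrow\mp 1$, $\pm *\leftrightarrow\mp *$; hence every output term lies in $\mathscr{W}$. Collecting the $\tr(B)\tr(\cdots)$ and $V_\varepsilon$ pieces into $Q_\varepsilon^{s,\tau}$ (monomials of trace degree $|\varepsilon|$) and the $\tfrac{1}{N}$ pieces into $R_\varepsilon^{s,\tau}$ (single traces of trace degree $|\varepsilon|$), and weighting the three kernels by $s-t/2$, $t/2$, $-\theta$, delivers \eqref{eq:ASONstVe}. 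No $1/N^2$ contribution arises, since two $X_\ell$'s always sit in the \emph{same} trace here.

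For part (ii), the bilinear $\sum_\ell(\widetilde{X}_\ell V_\varepsilon)(\widetilde{X}_\ell V_\delta)$ expands via the previous step into $\sum_{j,k}\eta_j\eta'_k\sum_\ell\tr(X_\ell A^{\varepsilon^{(j)}})\tr(X_\ell A^{\delta^{(k)}})$, and magic formula \eqref{eq:magic4} converts each inner $\ell$-sum to $\tfrac{1}{N^2}\bigl(\tr(A^{\overline{\varepsilon^{(j)}}}A^{\delta^{(k)}})-\tr(A^{\varepsilon^{(j)}}A^{\delta^{(k)}})\bigr)$, a single trace of degree $|\varepsilon|+|\delta|$. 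The two $\widetilde{Y}$-mixed bilinears produce the same traces, up to adjusted $\pm i$ prefactors. Weighting by $s-t/2$, $t/2$, $-\theta$ yields \eqref{eq:Redst} with $S_{\varepsilon,\delta}^{s,\tau}\in\mathscr{W}$ of trace degree $|\varepsilon|+|\delta|$.

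The main obstacle I expect is combinatorial bookkeeping: four letter types each impose different placements and signs for $X_\ell$ in the single-derivative formula, and $\widetilde{Y}_\ell$ flips sign on antiholomorphic positions relative to $\widetilde{X}_\ell$. Getting these signs consistent, and confirming that the trace-degree count truly stays at $|\varepsilon|$ in part (i) and $|\varepsilon|+|\delta|$ in part (ii), without leaking upward through the $\tr(B)$ factors, is the one place where care is essential. Once the signs are pinned down, the magic formulas reduce the remainder to mechanical algebra, mirroring the argument in the proof of Theorem~\ref{thm:intertwine1}.
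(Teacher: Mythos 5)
Your proposal is correct and follows essentially the same route as the paper: expand $\widetilde{X}_\ell^2 V_\varepsilon$ (and the $\widetilde{X}_\ell\widetilde{Y}_\ell$, $\widetilde{Y}_\ell^2$ variants) by Leibniz into diagonal ($X_\ell^2$) and off-diagonal ($X_\ell \cdots X_\ell$) terms, reduce via magic formulas \eqref{eq:magic1}--\eqref{eq:magic2} for part (1) and \eqref{eq:magic4} for part (2), use $A^\intercal=A^{-1}$ on $\SO(N,\C)$ to stay inside $\mathscr{W}$, and sort the results by powers of $1/N$ while weighting the three kernels by $s-t/2$, $t/2$, $-\theta$. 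The paper tracks positions explicitly inside the word rather than first cycling to a normal form $\tr(X_\ell A^{\varepsilon^{(j)}})$, but that is a presentational difference only; the sign bookkeeping, the trace-degree count, and the appearance of $1/N$ (from \eqref{eq:magic1}--\eqref{eq:magic2}) versus $1/N^2$ (from \eqref{eq:magic4}) are all handled as you describe.
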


For the proof below, we use the following conventions: let $\bSON=\{X_\ell\}_{\ell=1}^{\dSON}$ denote an orthonormal basis for $\so(N,\R)$, with $\beta_+=\bSON$ and $\beta_-=i\bSON$. If $X\in\MNC$ and $A\in\SO(N,\C)$,
\begin{equation}
(AX)^1:=AX,\hspace{.5cm}(AX)^{-1}:=-XA^{-1},\hspace{.5cm}(AX)^*:=X^*A^*,\hspace{.5cm}(AX)^{-*}:=-A^*X^*.
\end{equation}
In addition, we will be liberal in our use of $\pm$ to denote a sign that may vary for different terms and on different sides of an equation, since we will not require a precise formula for our word polynomials beyond what is described in the theorem statement.

\begin{proof}
	Fix a word $\varepsilon=(\varepsilon_1,\hdots ,\varepsilon_m)\in\mathscr{E}$. Then for each $X\in\beta_{\pm}$ and $A\in\SO(N,\C)$, 
	\begin{equation}
	(\wX V_\varepsilon)(A)=\sum_{j=1}^m\tr(A^{\varepsilon_1}\cdots (AX)^{\varepsilon_j}\cdots A^{\varepsilon_m}),
	\end{equation}
	so
	\begin{align}
	(\wX^2 V_\varepsilon)(A)=&\sum_{j=1}^m\tr(A^{\varepsilon_1}\cdots (AX^2)^{\varepsilon_j}\cdots A^{\varepsilon_m}) \label{eq:trAXX}\\
	&+2\sum_{1\leq j<k\leq m}\tr(A^{\varepsilon_1}\cdots (AX)^{\varepsilon_j}\cdots (AX)^{\varepsilon_k}\cdots A^{\varepsilon_m}) \label{eq:trAXAX}
	\end{align}
	
	Applying magic formula (\ref{eq:magic1}) to sum each term in (\ref{eq:trAXX}) over $\beta_{\pm}$, we have
	\begin{equation}
	\sum_{X\in\beta_\pm}\tr(A^{\varepsilon_1}\cdots (AX^2)^{\varepsilon_j}\cdots A^{\varepsilon_m})=\pm\frac{N-1}{N}\tr (A^{\varepsilon_1}\cdots A^{\varepsilon_j}\cdots A^{\varepsilon_m})=\pm\frac{N-1}{N}V_\varepsilon (A).
	\end{equation}
	Summing over $1\leq j\leq m$ now gives
	\begin{equation}
	\sum_{X\in\beta_\pm}\sum_{j=1}^m\tr(A^{\varepsilon_1}\cdots (AX^2)^{\varepsilon_j}\cdots A^{\varepsilon_m})=\frac{N-1}{N}n_\pm(\varepsilon)V_\varepsilon(A),
	\end{equation}
	where $n_\pm(\varepsilon)\in\Z$ and $|n_\pm (\varepsilon)|\leq|\varepsilon|$. For (\ref{eq:trAXAX}), we can express each term in the sum as
	\begin{equation} \label{eq:trAXAXterm}
	\tr(A^{\varepsilon_1}\cdots (AX)^{\varepsilon_j}\cdots (AX)^{\varepsilon_k}\cdots A^{\varepsilon_m})=\pm\tr(A^\ejkzero XA^\ejkone XA^\ejktwo),
	\end{equation}
	so $\ejkzero$, $\ejkone$, and $\ejktwo$ are substrings of $\varepsilon$ such that $\ejkzero\ejkone\ejktwo=\varepsilon$. Summing (\ref{eq:trAXAXterm}) over $X\in\beta_\pm$ by magic formula (\ref{eq:magic2}), we have
	\begin{align}
	\sum_{X\in\beta_\pm} \tr(A^{\varepsilon_1}\cdots &(AX)^{\varepsilon_j}\cdots (AX)^{\varepsilon_k}\cdots A^{\varepsilon_m})\nonumber\\
	&=\pm\left[\frac{1}{N}\tr(A^\ejkzero (A^\ejkone)^{-1}A^\ejktwo)-\tr(A^\ejkzero A^\ejktwo)\tr(A^\ejkone)\right]\\
	&=\pm\frac{1}{N}V_\ejkthree (A) \pm V_{\ejkzero\ejktwo}(A)V_\ejkone(A),
	\end{align}
	where $\ejkthree$ is the word of length $|\varepsilon|$ such that $V_\ejkthree(A)=\tr(A^\ejkzero (A^\ejkone)^{-1}A^\ejktwo)$. Thus summing (\ref{eq:trAXAX}) over $X\in\beta_\pm$ gives
	\begin{equation}
	\begin{aligned}
	\sum_{X\in\beta_\pm}\sum_{1\leq j<k\leq m}&\tr(A^{\varepsilon_1}\cdots (AX)^{\varepsilon_j}\cdots (AX)^{\varepsilon_k}\cdots A^{\varepsilon_m})\nonumber\\
	&=\frac{1}{N}\sum_{1\leq j<k\leq m}\pm V_{\ejkthree}(A)+\sum_{1\leq j<k\leq m}\pm V_{\ejkzero\ejktwo}(A)V_{\ejkone}(A).\label{eq:Qepsilon}
	\end{aligned}
	\end{equation}
	We define word polynomials corresponding to (\ref{eq:Qepsilon}) by
	\begin{align}
	Q_{\varepsilon,0}^\pm&=\sum_{1\leq j<k\leq m}\pm v_{\ejkthree}\\
	Q_{\varepsilon,1}^\pm&=\sum_{1\leq j<k\leq m}\pm v_{\ejkzero\ejktwo}v_{\ejkone}.
	\end{align}
	
	Now if $X\in\beta_+$ and $Y=iX\in\beta_-$, then
	\begin{equation}
	(\wY V_\varepsilon)(A)=\sum_{j=1}^m\tr(A^{\varepsilon_1}\cdots (AY)^{\varepsilon_j}\cdots A^{\varepsilon_m})=i\sum_{j=1}^m\pm \tr(A^{\varepsilon_1}\cdots (AX)^{\varepsilon_j}\cdots A^{\varepsilon_m}),
	\end{equation}
	and
	\begin{align}
	(\wX\wY V_\varepsilon)(A)=&i\sum_{j=1}^m\pm \tr(A^{\varepsilon_1}\cdots (AX^2)^{\varepsilon_j}\cdots A^{\varepsilon_m}) \label{eq:itrAXX}\\
	&+2i\sum_{1\leq j<k\leq m}\pm \tr(A^{\varepsilon_1}\cdots (AX)^{\varepsilon_j}\cdots (AX)^{\varepsilon_k}\cdots A^{\varepsilon_m}).\label{eq:itrAXAX}
	\end{align}
	A similar argument to the above shows that summing (\ref{eq:itrAXX}) over $X\in\beta_+$ gives
	\begin{equation}
	\sum_{X\in\beta_+}\sum_{j=1}^m \pm \tr(A^{\varepsilon_1}\cdots (AX^2)^{\varepsilon_j}\cdots A^{\varepsilon_m})=\frac{N-1}{N}\eta(\varepsilon)V_\varepsilon(A)
	\end{equation}
	where $\eta(\varepsilon)\in\Z$ and $|\eta(\varepsilon)|\leq |\varepsilon|$. In addition, summing (\ref{eq:itrAXAX}) over $X\in\beta_+$, we have
	\begin{equation}
	\begin{aligned}
	\sum_{X\in\beta_+}\sum_{1\leq j<k\leq m}\pm & \tr(A^{\varepsilon_1}\cdots (AX)^{\varepsilon_j}\cdots (AX)^{\varepsilon_k}\cdots A^{\varepsilon_m}) \\
	&=\frac{1}{N}\sum_{1\leq j<k\leq m}V_{\ejkthree}(A)+\sum_{1\leq j<k\leq m}\pm V_{\ejkzero\ejktwo}(A)V_{\ejkone}(A).
	\end{aligned}
	\end{equation} 
	We define corresponding word polynomials
	\begin{align}
	Q_{\varepsilon,2}&=i\sum_{1\leq j<k\leq m}\pm v_{\ejkthree}\\
	Q_{\varepsilon,3}&=i\sum_{1\leq j<k\leq m}\pm v_{\ejkzero\ejktwo}v_{\ejkone}.
	\end{align}
	Putting this together, we define
	\begin{align}
	Q_\varepsilon^{s,\tau}&=\left(s-\frac{t}{2}\right)\left(n_+(\varepsilon)v_\varepsilon +  2Q_{\varepsilon,1}^+\right)+\frac{t}{2}\left(n_-(\varepsilon)v_\varepsilon+2Q_{\varepsilon,1}^-\right)-\theta\left(i\eta(\varepsilon)+2Q_{\varepsilon,3}\right),\\
	R_\varepsilon^{s,\tau}&=\left(s-\frac{t}{2}\right)\left(-n_+(\varepsilon)v_\varepsilon +  2Q_{\varepsilon,0}^+\right)+\frac{t}{2}\left(-n_-(\varepsilon)v_\varepsilon+2Q_{\varepsilon,0}^-\right)-\theta\left(-i\eta(\varepsilon)+2Q_{\varepsilon,2}\right).
	\end{align}
	By construction, $Q_\varepsilon^{s,\tau}$ and $R_\varepsilon^{s,\tau}$ are of the required form and satisfy (\ref{eq:ASONstVe}). 
	
	For part (2), fix $\delta\in\mathscr{E}$ with $n=|\delta|$. For each $X\in\beta_\pm$, 
	\begin{equation}
	(\wX V_\delta)(A)(\wX V_\varepsilon)(A)=\sum_{j=1}^m\sum_{k=1}^n\tr(A^{\varepsilon_1}\cdots (AX)^{\varepsilon_j}\cdots A^{\varepsilon_m})\tr(A^{\delta_1}\cdots (AX)^{\delta_k}\cdots A^{\delta_n}).
	\end{equation}
	Using the invariance of the trace under cyclic permutations, we can write the terms in the above sum as $$\pm \tr(XA^{\varepsilon(j)})\tr(XA^{\delta(k)}),$$
	where $\varepsilon(j)$ and $\delta(k)$ are particular cyclic permutations of $\varepsilon$ and $\delta$. Summing over $X\in\beta_\pm$ and applying magic formula (\ref{eq:magic4}), along with the fact that $X\in\so(N,\C)$ is skew-symmetric,
	\begin{align}
	\sum_{X\in\beta_\pm}(\wX V_\delta)(A)(\wX V_\varepsilon)(A)&=\frac{1}{N^2}\sum_{j=1}^m\sum_{k=1}^n\pm\left(\tr((A^{\varepsilon(j)})^{-1}A^{\delta (k)})-\tr(A^{\varepsilon(j)}A^{\delta(k)}\right) \nonumber\\
	&=\frac{1}{N^2}\sum_{j=1}^m\sum_{k=1}^n\pm (V_{{\varepsilon_(j)^\intercal\delta(k)}}(A)-V_{\varepsilon(j)\delta(k)}(A)),
	\end{align} 
	where $\varepsilon(j)^\intercal$ is the word satisfying $V_{\varepsilon(j)^\intercal}(A)=V_{\varepsilon(j)}(A)^{-1}$. (For example, if $\varepsilon(j)=(-*,-1,*,1,1)$, then $\varepsilon(j)^\intercal=(-1,-1,-*,1,*)$.)	We define the associated word polynomials
	\begin{equation} \label{eq:Spmed}
	S_{\varepsilon,\delta}^\pm=\sum_{j=1}^m\sum_{k=1}^n\pm (v_{{\varepsilon_(j)^\intercal\delta(k)}}-v_{\varepsilon(j)\delta(k)}).
	\end{equation}
	
	Next, if $X\in\beta_+$ and $Y=iX\in\beta_-$, then
	\begin{equation}
	(\wX V_\varepsilon)(A)(\wY V_\delta)(A)=i\sum_{j=1}^m\sum_{k=1}^n\pm \tr(XA^{\varepsilon(j)})\tr(XA^{\delta(k)}).
	\end{equation}
	Using magic formula (\ref{eq:magic4}) to sum over $X\in\beta_+$ gives
	\begin{align}
	\sum_{\ell=1}^{\dSON}(\wX_\ell V_\varepsilon)(A)(\wY_\ell V_\delta)(A)&=\frac{i}{N^2}\sum_{j=1}^m\sum_{k=1}^n\pm (V_{{\varepsilon_(j)^\intercal\delta(k)}}(A)-V_{\varepsilon(j)\delta(k)}(A)).
	\end{align}
	Hence we define the associated word polynomial
	\begin{equation}\label{eq:Sprimeed}
	S'_{\varepsilon,\delta}=i\sum_{j=1}^m\sum_{k=1}^n\pm (v_{{\varepsilon_(j)^\intercal\delta(k)}}-v_{\varepsilon(j)\delta(k)})
	\end{equation}
	(where the signs in (\ref{eq:Sprimeed}) do not necessarily correspond to those in (\ref{eq:Spmed})). Finally, we define
	\begin{equation}
	S^{s,\tau}_{\varepsilon,\delta}=\left(s-\frac{t}{2}\right)S_{\varepsilon,\delta}^++\frac{t}{2}S_{\varepsilon,\delta}^--\theta S'_{\varepsilon,\delta},
	\end{equation}
	where we have constructed $S_{\varepsilon,\delta}^{s,\tau}$ so that it is a sum of monomials of trace degree $|\varepsilon|+|\delta|$ and satisfies (\ref{eq:Redst}). 
\end{proof}

\begin{theorem}[Intertwining formula for $\ASONst$] \label{thm:intertwiningII}
	Fix $s\in\R$ and $\tau=t+i\theta\in\C$, and let $\left\{Q_\varepsilon^{s,\tau}\, :\,\varepsilon\in\mathscr{E}\right\}$, $\left\{R_\varepsilon^{s,\tau}\, :\,\varepsilon\in\mathscr{E}\right\}$, and $\left\{S_{\varepsilon,\delta}^{s,\tau}\, :\,\varepsilon,\delta\in\mathscr{E}\right\}$ be as in Theorem \ref{thm:intertwiningIIpoly}. Define first and second order differential operators
	\begin{align}
	\wL_0^{s,\tau}&=\frac{1}{2}\sum_{\varepsilon\in\mathscr{E}}Q_{\varepsilon}^{s,\tau}\p{v_\varepsilon}\\
	\wL_1^{s,\tau}&=\frac{1}{2}\sum_{\varepsilon\in\mathscr{E}}R_{\varepsilon}^{s,\tau}\p{v_\varepsilon}\\
	\wL_2^{s,\tau}&=\frac{1}{2}\sum_{\varepsilon,\delta\in\mathscr{E}}S_{\varepsilon,\delta}^{s,\tau}\pp{v_\varepsilon}{v_\delta}
	\end{align}
	Then for all $N\in\N$ and $P\in\mathscr{W}$, 
	\begin{equation} \label{eq:intertwine2}
	\frac{1}{2}\ASONst P_N=\left[\left(\wL_0^{s,\tau}+\frac{1}{N}\wL_1^{s,\tau}+\frac{1}{N^2}\wL_2^{s,\tau}\right)P\right]_N.
	\end{equation}
\end{theorem}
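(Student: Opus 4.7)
The plan is to apply $\ASONst$ to $P_N$ directly via the chain rule and then recognize the resulting expressions using Theorem \ref{thm:intertwiningIIpoly}. Both sides of (\ref{eq:intertwine2}) are linear in $P \in \mathscr{W}$, so it suffices to write out the chain-rule expansion once for a generic polynomial $P$ in the word variables.

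For any $Z \in \so(N,\C)$, the chain rule applied to $P_N(A) = P(\mathbf{V}(A))$ gives
\[
\widetilde{Z} P_N = \sum_{\varepsilon \in \mathscr{E}} [\partial_{v_\varepsilon} P]_N \cdot \widetilde{Z} V_\varepsilon,
\]
and iterating with a second left-invariant vector field $\widetilde{Z}'$,
\[
\widetilde{Z}' \widetilde{Z} P_N = \sum_{\varepsilon} [\partial_{v_\varepsilon} P]_N \cdot \widetilde{Z}' \widetilde{Z} V_\varepsilon + \sum_{\varepsilon,\delta} [\partial_{v_\varepsilon}\partial_{v_\delta} P]_N \cdot (\widetilde{Z}' V_\delta)(\widetilde{Z} V_\varepsilon).
\]
Taking the linear combination $(\widetilde{Z}, \widetilde{Z}') = (\wX_\ell, \wX_\ell),\, (\wY_\ell, \wY_\ell),\, (\wY_\ell, \wX_\ell)$ with coefficients $(s-t/2),\, (t/2),\, -\theta$ and summing $\ell = 1, \dots, \dSON$, I obtain a first-order contribution $\sum_\varepsilon [\partial_{v_\varepsilon} P]_N \cdot \ASONst V_\varepsilon$ plus a second-order contribution
\[
\sum_{\varepsilon,\delta} [\partial_{v_\varepsilon}\partial_{v_\delta} P]_N \cdot \sum_{\ell=1}^{\dSON}\left[\left(s-\tfrac{t}{2}\right)(\wX_\ell V_\varepsilon)(\wX_\ell V_\delta) + \tfrac{t}{2}(\wY_\ell V_\varepsilon)(\wY_\ell V_\delta) - \theta (\wX_\ell V_\delta)(\wY_\ell V_\varepsilon)\right].
\]

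The only subtle bookkeeping point, and the ``main obstacle'' such as it is, concerns this cross term: the $\wX_\ell \wY_\ell$ part of the expansion naturally produces $(\wX_\ell V_\delta)(\wY_\ell V_\varepsilon)$ rather than the $(\wX_\ell V_\varepsilon)(\wY_\ell V_\delta)$ appearing on the left-hand side of (\ref{eq:Redst}). However, since $\partial_{v_\varepsilon}\partial_{v_\delta} = \partial_{v_\delta}\partial_{v_\varepsilon}$, I may swap the indices $\varepsilon \leftrightarrow \delta$ in that summand so that the inner sum is exactly the expression on the left-hand side of (\ref{eq:Redst}).

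Applying part (1) of Theorem \ref{thm:intertwiningIIpoly} to the first-order piece and part (2) to the second-order piece yields
\[
\ASONst P_N = \left[\sum_\varepsilon\left(Q_\varepsilon^{s,\tau} + \tfrac{1}{N}R_\varepsilon^{s,\tau}\right)\partial_{v_\varepsilon} P + \tfrac{1}{N^2}\sum_{\varepsilon,\delta} S_{\varepsilon,\delta}^{s,\tau}\, \partial_{v_\varepsilon}\partial_{v_\delta} P\right]_N,
\]
where I used that evaluation $[\,\cdot\,]_N$ commutes with derivatives in the $v_\varepsilon$ variables and with polynomial multiplication. Dividing by $2$ and recognizing the three sums on the right-hand side as $\wL_0^{s,\tau} P$, $\wL_1^{s,\tau} P$, and $\wL_2^{s,\tau} P$ via their definitions gives (\ref{eq:intertwine2}).
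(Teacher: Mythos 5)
Your argument is correct and follows essentially the same route as the paper: expand $\ASONst P_N$ via the chain rule into a first-order piece $\sum_\varepsilon [\partial_{v_\varepsilon}P]_N \cdot \ASONst V_\varepsilon$ and a second-order piece, then apply parts (1) and (2) of Theorem \ref{thm:intertwiningIIpoly}. Your explicit remark about swapping $\varepsilon\leftrightarrow\delta$ in the cross term using the symmetry of $\partial_{v_\varepsilon}\partial_{v_\delta}$ is a small bookkeeping detail that the paper's proof passes over silently, but it does not change the substance of the argument.
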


\begin{proof}
	For each $X\in\M_N(\C)$, we apply the chain rule to get
	\begin{align*}
	\wX^2P_N&=\sum_{\varepsilon\in\mathscr{E}}\wX\left[\left(\dP\right)(\V)\cdot (\wX V_\varepsilon)\right]\\
	&=\sum_{\varepsilon\in\mathscr{E}}\left(\dP\right) (\V)\cdot (\wX^2 V_\varepsilon)+\sum_{\varepsilon,\delta\in\mathscr{E}}\left( \ddP \right)(\V)\cdot (\wX V_\varepsilon)(\wX V_\delta).
	\end{align*} 
	Hence
	\begin{align*}
	\ASONst P_N =\sum_{\varepsilon\in\mathscr{E}}&\left(\dP\right)(\V)\cdot\ASONst V_\varepsilon\\
	+&\sum_{\varepsilon,\delta\in\mathscr{E}}\left(\ddP\right)(\V)\sum_{\ell=1}^{\dSON}\bigg[\left(s-\frac{t}{2}\right)(\wX_\ell V_\varepsilon)(\wX_\ell V_\delta)\\
	&\hspace{5cm}+\frac{t}{2}(\wY_\ell V_\varepsilon)(\wY_\ell V_\delta)-\theta(\wX_\ell V_\varepsilon)(\wY_\ell V_\delta)\bigg].
	\end{align*}
	The result now follows from Theorem \ref{thm:intertwiningIIpoly}. 
\end{proof}

The following result will be useful for the proof of Theorem \ref{thm:freeSBlimit}. It appears in \cite{DHK:U(N)} for the $\GL(N,\C)$ case, with the proof virtually unchanged for $\SO(N,\C)$. 

\begin{lemma}[{\cite[Lemma 3.28]{DHK:U(N)}}] \label{lem:sesquilinearform}
	There exists a sesquilinear form (with the second argument conjugate linear) $$\mathcal{B}:\Cuuv\times\Cuuv\to\mathscr{W}$$ such that, for all $P,Q\in\Cuuv$, we have $\deg(\mathcal{B}(P,Q))=\deg(P)+\deg(Q)$ and $$[\mathcal{B}(P,Q)]_N(A)=\tr [P_N(A)Q_N(A)^*]\hspace{.5cm}\mbox{for all }A\in\SO(N,\C).$$
\end{lemma}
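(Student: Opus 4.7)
The plan is to define $\mathcal{B}$ explicitly on monomial pairs, extend by sesquilinearity, and then check the two required properties by direct computation.

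First I would define $\mathcal{B}$ on monomials. A typical monomial in $\Cuuv$ has the form $M = u^{k} \prod_{i=1}^{r} v_{k_i}^{a_i}$, with corresponding trace polynomial $M_N(A) = A^{k} \prod_{i} \tr(A^{k_i})^{a_i}$ on $\SO(N,\C)$. Given a second monomial $M' = u^{\ell} \prod_{j=1}^{s} v_{\ell_j}^{b_j}$, I would set
\begin{equation*}
\mathcal{B}(M, M') \;=\; v_{\varepsilon(k,\ell)} \,\prod_{i=1}^{r} \iota(v_{k_i})^{a_i} \,\prod_{j=1}^{s} \iota^{*}(v_{\ell_j})^{b_j} \;\in\; \mathscr{W},
\end{equation*}
with the convention $v_{\varepsilon(0,0)} = 1$. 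Extending in the first argument $\C$-linearly and in the second argument conjugate-linearly defines $\mathcal{B}$ on all of $\Cuuv \times \Cuuv$.

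Next I would verify the evaluation formula. For $A \in \SO(N,\C)$, the fact that $\overline{\tr B} = \tr B^{*}$ for any matrix $B$ gives $\overline{\tr(A^{\ell_j})} = \tr((A^*)^{\ell_j})$, so
\begin{equation*}
M'_N(A)^{*} \;=\; (A^{*})^{\ell}\prod_{j} \tr((A^{*})^{\ell_j})^{b_j}.
\end{equation*}
Taking the trace of $M_N(A) M'_N(A)^{*}$ and factoring out the scalar traces yields
\begin{equation*}
\tr\bigl[M_N(A) M'_N(A)^{*}\bigr] \;=\; \tr(A^{k}(A^{*})^{\ell})\prod_{i}\tr(A^{k_i})^{a_i}\prod_{j}\tr((A^{*})^{\ell_j})^{b_j},
\end{equation*}
which agrees with $[\mathcal{B}(M,M')]_N(A)$ upon recalling that $V_{\varepsilon(k,\ell)}(A) = \tr(A^{k}(A^{*})^{\ell})$ and that by \eqref{eq:intertwineword}, $[\iota(v_{k_i})]_N(A) = \tr(A^{k_i})$ and $[\iota^{*}(v_{\ell_j})]_N(A) = \overline{\tr(A^{\ell_j})} = \tr((A^{*})^{\ell_j})$. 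Sesquilinearity of both sides, combined with the fact that $(\cdot)^{*}$ is conjugate linear and $\tr$ is linear, then extends this identity from monomials to all $P, Q \in \Cuuv$.

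Finally, I would confirm the trace degree property. On the monomial pair $(M, M')$ above, $\deg(M) = |k| + \sum_i a_i |k_i|$ and $\deg(M') = |\ell| + \sum_j b_j|\ell_j|$, while in $\mathscr{W}$,
\begin{equation*}
\deg\bigl(\mathcal{B}(M,M')\bigr) = |\varepsilon(k,\ell)| + \sum_i a_i |\varepsilon(k_i,0)| + \sum_j b_j|\varepsilon(0,\ell_j)| = (|k|+|\ell|) + \sum_i a_i|k_i| + \sum_j b_j|\ell_j|,
\end{equation*}
which is exactly $\deg(M) + \deg(M')$. Passing to linear combinations gives $\deg(\mathcal{B}(P,Q)) \le \deg(P) + \deg(Q)$, and by choosing top-degree monomials the bound is realized. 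There is no serious obstacle here; the only mild subtlety is getting the signs in the word $\varepsilon(k,\ell)$ and the handling of $k = 0$ or $\ell = 0$ correctly (handled by the convention $v_{\varepsilon(0,0)} = 1$) and, on the analytic side, remembering that $A \in \SO(N,\C)$ is not unitary so $A^{-1} \ne A^{*}$ — which is precisely why the word space $\mathscr{W}$ (rather than $\Cuuv$) is the correct target for $\mathcal{B}$.
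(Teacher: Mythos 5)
Your construction of $\mathcal{B}$ on monomial pairs, extended by sesquilinearity, is the natural one, and the verification of the evaluation identity $[\mathcal{B}(P,Q)]_N(A)=\tr[P_N(A)Q_N(A)^*]$ is correct. The paper gives no independent proof of this lemma, deferring to \cite[Lemma~3.28]{DHK:U(N)} with the remark that the argument carries over to $\SO(N,\C)$; your explicit construction is in the same spirit, and your closing observation about why the target must be $\mathscr{W}$ rather than $\Cuuv$ (because $A^{-1}\neq A^*$ on $\SO(N,\C)$) is exactly the right point.

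Where you should be more careful is the degree claim. Your monomial-level computation correctly shows $\deg(\mathcal{B}(M,M'))=\deg(M)+\deg(M')$, and sesquilinearity then gives $\deg(\mathcal{B}(P,Q))\leq\deg(P)+\deg(Q)$. But the step ``by choosing top-degree monomials the bound is realized'' does not deliver equality for all $P,Q$, because the assignment $(M,M')\mapsto\mathcal{B}(M,M')$ is not injective on monomial pairs and cancellation can occur. Concretely, with $P=1$ and $Q=u^\ell-v_\ell$ (any $\ell\neq 0$) your construction gives $\mathcal{B}(1,u^\ell)=v_{\varepsilon(0,\ell)}=\mathcal{B}(1,v_\ell)$, hence $\mathcal{B}(1,u^\ell-v_\ell)=0$, while $\deg(1)+\deg(u^\ell-v_\ell)=|\ell|$. (This is consistent with the evaluation identity: $\tr[Q_N(A)^*]=\tr((A^*)^\ell)-\overline{\tr(A^\ell)}=0$.) So the equality asserted in the lemma is stronger than what your $\mathcal{B}$ actually satisfies. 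This is a minor point, however: what the paper uses downstream is only that $\mathcal{B}$ maps $\Cmuuv\times\Cmuuv$ into the finite-dimensional space $\mathscr{W}_{2m}$, i.e.\ the inequality $\deg(\mathcal{B}(P,Q))\leq\deg(P)+\deg(Q)$, which your argument does establish, and which is what feeds into the bound \eqref{eq:BPQbound} in the proof of Theorem~\ref{thm:freeSBlimit}.
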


\section{Limit theorems for the Segal-Bargmann transform on $\SO(N,\R)$} \label{sec:limittheoremsSONR}

We now use the results from the previous section to identify the limit of the Segal-Bargmann transform on $\SO(N,\R)$ as $N\to\infty$. In Section \ref{subsec:concmeasures}, we prove a few preliminary results regarding the way in which the heat kernel measures $\rho_s^{\SO(N,\R)}$ and $\mu_{s,\tau}^{\SO(N,\C)}$ concentrate their mass as $N\to\infty$. Then, in Section \ref{subsec:freeSBtransf}, we prove the $\SO(N,\R)$ version of Theorem \ref{thm:freeSBlimit}, the main limit theorem for $\mathbf{B}_{s,\tau}^{\SO(N,\R)}$. 

\subsection{Concentration of measures} \label{subsec:concmeasures}

The following lemma is an essential component of our main limit theorems. It is a known result (see \cite[Corollary 6.2.32]{HJ:topics}); we include a direct proof here for convenience. 

\begin{lemma} \label{lem:expmatrixnorm}
	Let $X,Y\in M(N,\C)$ and suppose $\|\cdot\|$ is a submultiplicative matrix norm. Then $$\|e^{X+Y}-e^X\|\leq \|Y\|e^{\|X\|}e^{\|Y\|}.$$
\end{lemma}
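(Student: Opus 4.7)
The plan is to use a Duhamel-type integral representation for $e^{X+Y} - e^X$. Specifically, I would consider the matrix-valued function $f(s) = e^{s(X+Y)} e^{(1-s)X}$ on $[0,1]$, which interpolates between $f(0) = e^X$ and $f(1) = e^{X+Y}$. Since $e^{s(X+Y)}$ commutes with $X+Y$ and $e^{(1-s)X}$ commutes with $X$, differentiating gives
\[
f'(s) = e^{s(X+Y)}(X+Y)e^{(1-s)X} - e^{s(X+Y)}Xe^{(1-s)X} = e^{s(X+Y)}\,Y\,e^{(1-s)X},
\]
so the fundamental theorem of calculus yields the identity
\[
e^{X+Y} - e^X = \int_0^1 e^{s(X+Y)}\,Y\,e^{(1-s)X}\,ds.
\]

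Next I would apply the submultiplicative norm to the integrand. From submultiplicativity, the power series defining the exponential gives $\|e^A\| \le \sum_{n=0}^\infty \|A\|^n/n! = e^{\|A\|}$ for any matrix $A$. Combined with the triangle inequality $\|X+Y\| \le \|X\|+\|Y\|$, this yields
\[
\|e^{X+Y} - e^X\| \;\le\; \|Y\| \int_0^1 e^{s\|X+Y\|}\,e^{(1-s)\|X\|}\,ds \;\le\; \|Y\|\,e^{\|X\|}\int_0^1 e^{s\|Y\|}\,ds.
\]

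Finally, assuming $\|Y\| > 0$ (the case $Y = 0$ is trivial), the remaining integral equals $(e^{\|Y\|}-1)/\|Y\|$. The elementary inequality $e^t - 1 \le t\,e^t$ for $t \ge 0$ (obtained by integrating $e^u \le e^t$ on $[0,t]$) then gives
\[
\|Y\|\,e^{\|X\|}\cdot\frac{e^{\|Y\|}-1}{\|Y\|} \;=\; e^{\|X\|}\bigl(e^{\|Y\|}-1\bigr) \;\le\; \|Y\|\,e^{\|X\|}\,e^{\|Y\|},
\]
which is the desired bound. There is no real obstacle in this argument; the only subtlety worth flagging is confirming $\|e^A\|\le e^{\|A\|}$ for a general submultiplicative norm, which is immediate from $\|A^n\|\le\|A\|^n$ and convergence of the exponential series. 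An alternative route via the telescoping identity $(X+Y)^n - X^n = \sum_{k=0}^{n-1}(X+Y)^k Y X^{n-1-k}$ and termwise norm estimates also works, but the Duhamel approach is both shorter and gives the sharper constant directly.
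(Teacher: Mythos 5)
Your proof is correct, and it takes a genuinely different route from the one in the paper. The paper proceeds by a series/telescoping argument: it bounds $\|(X+Y)^n - X^n\| \le (\|X\|+\|Y\|)^n - \|X\|^n$ termwise, sums the exponential series to obtain $\|e^{X+Y}-e^X\| \le e^{\|X\|+\|Y\|}-e^{\|X\|}$, and then applies the same elementary inequality $e^y - 1 \le y e^y$ (proved there via Bernoulli's inequality rather than by integration). Your Duhamel representation $e^{X+Y}-e^X = \int_0^1 e^{s(X+Y)} Y e^{(1-s)X}\,ds$ reaches the identical intermediate bound $e^{\|X\|}(e^{\|Y\|}-1) = e^{\|X\|+\|Y\|}-e^{\|X\|}$ in one integral step. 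The tradeoffs are what you'd expect: the series argument is more elementary and requires no calculus on matrix-valued functions, while the Duhamel argument is shorter and makes the source of the factor $\|Y\|$ transparent from the start. You flag the telescoping alternative yourself at the end — that is precisely the paper's proof — so you've correctly identified both viable routes.
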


\begin{proof}
	For $n\geq 0$, note that $$\|(X+Y)^n-X^n\|\leq (\|X\|+\|Y\|)^n-\|X\|^n,$$ where the inequality follows by expanding $(X+Y)^n$, which includes an $X^n$, then applying the submultiplicativity of the matrix norm, and then recombining terms. Hence
	\begin{align*}
	\|e^{X+Y}-e^X\|&=\left\|\sum_{n=0}^\infty \frac{(X+Y)^n}{n!}-\sum_{n=0}^\infty \frac{X^n}{n!} \right\|\\
	&\leq\sum_{n=0}^\infty\frac{1}{n!}\|(X+Y)^n-X^n\|\\
	&\leq\sum_{n=0}^\infty \frac{1}{n!}[(\|X\|+\|Y\|)^n-\|X\|^n]\\
	&=e^{\|X\|+\|Y\|}-e^{\|X\|}.
	\end{align*}
	It remains to show that $e^{\|X\|+\|Y\|}-e^{\|X\|}\leq \|Y\|e^{\|X\|}e^{\|Y\|}$, which is equivalent to showing that $e^{y}-1\leq ye^y$ for $y\geq 0$. Rearranging, this inequality is equivalent to $1-y\leq e^{-y}$, which holds, in fact, for all $y\in\R$: by Bernoulli's inequality, 
	\begin{equation*}
	e^{-y}=\lim_{n\to\infty}\left(1-\frac{y}{n}\right)^n\geq 1-y.\qedhere
	\end{equation*}
\end{proof}

\begin{cor} \label{cor:expmatrixnorm}
	Let $V$ be a finite-dimensional normed vector space over $\C$ and suppose that that $L_0$, $L_1$, and $L_2$ are operators on $V$. Then there exist constants $C_1,C_2,C_3<\infty$ depending on $L_0, L_1, L_2,\|\cdot\|_V$ such that for $N$ sufficiently large,
	\begin{gather}
	\|e^{L_0+\frac{1}{N}L_1+\frac{1}{N^2}L_2}-e^{L_0}\|_{\End(V)}\leq \frac{C_1}{N},\label{eq:expopnorm}\\
	\|e^{L_0+\frac{1}{N}L_1+\frac{1}{N^2}L_2}-e^{L_0+\frac{1}{N}L_1}\|_{\End(V)}\leq \frac{C_2}{N^2}\label{eq:expopnorm2}\\
	\|e^{L_0+\frac{1}{N}L_1}-e^{L_0}\|_{\End(V)}\leq \frac{C_3}{N}\label{eq:expopnorm3}
	\end{gather}
	where $\|\cdot\|_{\End(V)}$ is the operator norm on $\End(V)$ induced by $\|\cdot\|_V$. Hence if $\varphi\in V^*$ is a linear functional, then for $N$ sufficiently large,
	\begin{gather}
	|\varphi(e^{L_0+\frac{1}{N}L_0+\frac{1}{N^2}L_2}v)-\varphi(e^{L_0}v)|\leq \frac{C_1}{N}\|\varphi\|_{V^*}\|v||_V, \label{eq:expfuncnorm}\\
	|\varphi(e^{L_0+\frac{1}{N}L_1+\frac{1}{N^2}L_2}v)-\varphi(e^{L_0+\frac{1}{N}L_1}v)|\leq \frac{C_2}{N^2}\|\varphi\|_{V^*}\|v||_V \label{eq:expfuncnorm2}\\
	|\varphi(e^{L_0+\frac{1}{N}L_1}v)-\varphi(e^{L_0}v)|\leq \frac{C_3}{N}\|\varphi\|_{V^*}\|v||_V \label{eq:expfuncnorm3}
	\end{gather}
	where $\|\cdot\|_{V^*}$ is the dual norm on $V^*$. 
\end{cor}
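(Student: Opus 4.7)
The plan is to reduce everything to direct applications of Lemma~\ref{lem:expmatrixnorm}, after observing that the operator norm on $\End(V)$ induced by $\|\cdot\|_V$ is submultiplicative. Since $V$ is finite-dimensional, all three operators $L_0,L_1,L_2$ have finite operator norms; denote these by $\ell_0,\ell_1,\ell_2$. By choosing a basis of $V$ we may identify $\End(V)$ with a matrix algebra, so Lemma~\ref{lem:expmatrixnorm} applies verbatim (in fact only submultiplicativity of the norm is used in its proof).

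For \eqref{eq:expopnorm3}, I would apply Lemma~\ref{lem:expmatrixnorm} with $X=L_0$ and $Y=\tfrac{1}{N}L_1$: this yields
\[
\bigl\|e^{L_0+\frac{1}{N}L_1}-e^{L_0}\bigr\|_{\End(V)}\le \frac{\ell_1}{N}\,e^{\ell_0}\,e^{\ell_1/N}\le \frac{C_3}{N},
\]
where $C_3:=\ell_1 e^{\ell_0}e^{\ell_1}$ works for all $N\ge 1$. For \eqref{eq:expopnorm2}, apply the lemma with $X=L_0+\tfrac{1}{N}L_1$ and $Y=\tfrac{1}{N^2}L_2$: then $\|X\|\le\ell_0+\ell_1/N$ and $\|Y\|=\ell_2/N^2$, giving
\[
\bigl\|e^{L_0+\frac{1}{N}L_1+\frac{1}{N^2}L_2}-e^{L_0+\frac{1}{N}L_1}\bigr\|_{\End(V)}\le \frac{\ell_2}{N^2}\,e^{\ell_0+\ell_1/N}\,e^{\ell_2/N^2}\le \frac{C_2}{N^2}
\]
for $N$ large enough, with (for instance) $C_2:=\ell_2 e^{\ell_0+\ell_1}e^{\ell_2}$. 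The bound \eqref{eq:expopnorm} then follows either directly from Lemma~\ref{lem:expmatrixnorm} with $X=L_0$ and $Y=\tfrac{1}{N}L_1+\tfrac{1}{N^2}L_2$, or simply by the triangle inequality applied to \eqref{eq:expopnorm2} and \eqref{eq:expopnorm3}, taking $C_1:=C_2+C_3$ (or a similar constant).

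The functional estimates \eqref{eq:expfuncnorm}--\eqref{eq:expfuncnorm3} are then immediate: for any $T\in\End(V)$, $v\in V$ and $\varphi\in V^*$,
\[
|\varphi(Tv)|\le \|\varphi\|_{V^*}\,\|Tv\|_V\le \|\varphi\|_{V^*}\,\|T\|_{\End(V)}\,\|v\|_V,
\]
so applying this to the difference of the two exponentials and invoking \eqref{eq:expopnorm}, \eqref{eq:expopnorm2}, \eqref{eq:expopnorm3} respectively yields each of the three functional bounds with the same constants. There is no real obstacle in this argument: the only thing to watch is that one takes $N$ large enough in \eqref{eq:expopnorm2} so that the factor $e^{\ell_1/N}e^{\ell_2/N^2}$ is bounded by an absolute constant (any $N\ge 1$ works, up to adjusting $C_2$).
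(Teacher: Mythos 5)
Your proof is correct and follows essentially the same route as the paper: both reduce each bound to a direct application of Lemma~\ref{lem:expmatrixnorm} with appropriate choices of $X$ and $Y$, then pass to linear functionals via $|\varphi(Tv)|\le\|\varphi\|_{V^*}\|T\|_{\End(V)}\|v\|_V$. The only cosmetic difference is that you derive \eqref{eq:expopnorm} from \eqref{eq:expopnorm2} and \eqref{eq:expopnorm3} by the triangle inequality, while the paper applies the lemma once with $X=L_0$, $Y=\tfrac1N L_1+\tfrac1{N^2}L_2$ and leaves the other two as ``similar.''
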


\begin{proof}
	We set $X=L_0$ and $Y=\frac{1}{N}L_1+\frac{1}{N^2}L_2$ as in Lemma \ref{lem:expmatrixnorm}, so 
	\begin{align*}
	\|e^{L_0+\frac{1}{N}L_1+\frac{1}{N^2}L_2}-e^{L_0}\|_{\End(V)}&\leq\frac{1}{N}\| L_1\|_{\End(V)} e^{\|L_0\|_{\End (V)}}e^{\left\|\frac{1}{N}L_1+\frac{1}{N^2}L_2\right\|_{\End(V)}}\\
	&\leq\frac{1}{N}\| L_1\|_{\End(V)} e^{\|L_0\|_{\End (V)}}e^{\frac{1}{N}\left\|L_1\right\|_{\End(V)}+\frac{1}{N^2}\left\|L_2\right\|_{\End(V)}}.
	\end{align*}
	Choose $N_0\in\N$ such that $e^{\frac{1}{N_0}\left\|L_1\right\|_{\End(V)}+\frac{1}{N_0^2}\left\|L_2\right\|_{\End(V)}}\leq 2$. Then (\ref{eq:expopnorm}) holds for all $N\geq N_0$ by setting $C_1=2\|L_1\|_{\End(V)}e^{\|L_0\|_{\End(V)}}$. Equation (\ref{eq:expfuncnorm}) then follows. The proofs of \eqref{eq:expopnorm2}, \eqref{eq:expopnorm3}, and subsequently \eqref{eq:expfuncnorm2} and \eqref{eq:expfuncnorm3}, are similar. 
\end{proof}

We now introduce notation and establish a few preliminary results which we will use for proving Theorem \ref{thm:freeSBlimit} for $\SO(N,\R)$ in the next section.

\begin{definition}
	Define a family of holomorphic functions $\{\nu_k\}_{k\in\Z}$ on $\C$ by setting $\nu_0(\tau)\equiv 1$ and for $k\neq 0$, 
	\begin{equation} \label{eq:nuk}
	\nu_k(\tau)=e^{-\frac{|k|}{2}\tau}\sum_{j=0}^{|k|-1}\frac{(-\tau)^j}{j!}|k|^{j-1}\binom{|k|}{j+1}.
	\end{equation}
	For $\tau\in\C$, define the {\bf trace evaluation map} $\pi_{\tau}:\Cuuv\to\C[u,u^{-1}]$ by
	\begin{equation}
	(\pi_\tau P)(u)=P(u;\mathbf{v})|_{v_k=\nu_k(\tau),k\neq 0}.
	\end{equation}
\end{definition}

For a concrete example, if $P(u;\mathbf{v})=v_3v_{-7}^2u^6+9v_1v_{-4}^5$, then $$(\pi_\tau P)(u)=\nu_3(\tau)\nu_{-7}(\tau)^2u^6+9\nu_1(\tau)\nu_{-4}(\tau)^5.$$

The following result is a key tool in proving our main limit theorems.

\begin{theorem}[{Biane, \cite{Biane:freeBm}}] \label{thm:biane}
	For each $s>0$ and $k\in\Z$, $$\lim_{N\to\infty}\int_{\U(N)}\tr (U^k)\, \rho_s^{\U(N)}(U)\, dU=\nu_k(s).$$ 
\end{theorem}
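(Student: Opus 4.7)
The plan is to use the $\U(N)$ intertwining formula to reduce the integral to an evaluation on the polynomial algebra $\Cuuv$, pass to the large-$N$ limit using the exponential bound of Corollary \ref{cor:expmatrixnorm}, and identify the resulting quantity as $\nu_k(s)$ via a characterizing ODE. Bi-invariance of the heat kernel gives
\[
\int_{\U(N)}\tr(U^k)\,\rho_s^{\U(N)}(U)\,dU = \bigl(e^{\frac{s}{2}\Delta_{\U(N)}} V_k\bigr)(I_N),
\]
where $V_k = [v_k]_N^{(2')}$ is the scalar trace polynomial $U \mapsto \tr(U^k)$. Applying the $\U(N)$-intertwining formula \eqref{eq:intertwineUNexp} and Corollary \ref{cor:expmatrixnorm} on the invariant finite-dimensional subspace $\Cmuuv$ with $m = |k|$ rewrites this as
\[
\bigl[e^{\frac{s}{2}\mathcal{L}_0} v_k\bigr]_N(I_N) + O\!\left(\frac{1}{N^2}\right).
\]

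The next step is to identify the $N$-independent quantity $\psi_k(s) := [e^{\frac{s}{2}\mathcal{L}_0} v_k]_N(I_N)$. Inspection of Definition \ref{def:Doperators} shows that $\mathcal{N}_1$ and $\mathcal{Y}_1$ annihilate $\C[\mathbf{v}]$, so $\mathcal{L}_0$ preserves $\C[\mathbf{v}]$ and restricts there to the first-order derivation $\tilde{\mathcal{L}}_0 := -\mathcal{N}_0 - 2\mathcal{Y}_2$ satisfying $\tilde{\mathcal{L}}_0 v_k = -|k| v_k - 2\sum_{j=1}^{|k|-1} j\,v_j v_{k-j}$ for $k > 0$ (with the analogous formula for $k < 0$). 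Because $\tilde{\mathcal{L}}_0$ is a derivation, $e^{\frac{s}{2}\tilde{\mathcal{L}}_0}$ is an algebra homomorphism, so $e^{\frac{s}{2}\tilde{\mathcal{L}}_0}(PQ) = e^{\frac{s}{2}\tilde{\mathcal{L}}_0}(P)\cdot e^{\frac{s}{2}\tilde{\mathcal{L}}_0}(Q)$. Combining these observations and evaluating at $v_\ell = 1$ for all $\ell$ produces the coupled nonlinear ODE system
\[
\psi_k'(s) = -\tfrac{|k|}{2}\psi_k(s) - \sum_{j=1}^{|k|-1} j\,\psi_j(s)\,\psi_{k-j}(s), \qquad \psi_k(0) = 1,
\]
which is Biane's classical moment recursion for the free multiplicative Brownian motion at time $s$. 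It can be solved recursively in $|k|$ via a linear scalar ODE, so has a unique solution on $[0,\infty)$.

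It remains to verify that the explicit function $\nu_k(s)$ defined in \eqref{eq:nuk} satisfies the same initial value problem. I expect this to be the main technical obstacle: it amounts to a direct but delicate manipulation of the Laguerre-type sums in \eqref{eq:nuk}, using Pascal's identity $\binom{|k|}{j+1} = \binom{|k|-1}{j} + \binom{|k|-1}{j+1}$ together with a Cauchy-product identity relating $\nu_j(s)\nu_{k-j}(s)$ to $\nu_k(s)$ and $\nu_k'(s)$. An alternative, more conceptual route would bypass the closed-form check by recognizing $\psi_k(s)$ as $\tau(u_s^k)$ for Biane's free unitary Brownian motion $u_s$ and appealing to his independent computation of the spectral distribution of $u_s$. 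Either way, combining with the $O(1/N^2)$ error above completes the proof.
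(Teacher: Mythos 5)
The paper does not actually prove this statement: Theorem \ref{thm:biane} is cited as a black-box result from Biane \cite{Biane:freeBm}, and the text immediately after it uses it (together with intertwining formula \eqref{eq:intertwineUNexp} and Corollary \ref{cor:expmatrixnorm}) to deduce the identity $\nu_k(s) = (e^{\frac{s}{2}\mathcal{L}_0}v_k)(\mathbf{1})$. Your proposal is essentially the reverse of this manipulation plus the missing independent verification. The reduction you carry out is correct: the heat-kernel representation, the $\U(N)$ intertwining formula, and Corollary \ref{cor:expmatrixnorm} applied on $\C_{|k|}[u,u^{-1};\mathbf{v}]$ (with $L_1 = 0$, $L_2 = -\tfrac{s}{2}(2\mathcal{K}_1^- + \mathcal{K}_2^-)$) do give
\[
\lim_{N\to\infty}\int_{\U(N)}\tr(U^k)\,\rho_s^{\U(N)}(U)\,dU = \bigl(e^{\frac{s}{2}\mathcal{L}_0}v_k\bigr)(\mathbf{1}),
\]
and your observation that $\mathcal{L}_0$ restricts on $\C[\mathbf{v}]$ to the derivation $-\mathcal{N}_0 - 2\mathcal{Y}_2$ (so that exponentiation is a homomorphism and evaluation at $\mathbf{1}$ factors over products) correctly produces the recursion $\psi_k' = -\tfrac{|k|}{2}\psi_k - \sum_{j=1}^{|k|-1} j\,\psi_j\psi_{k-j}$, $\psi_k(0)=1$. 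A spot check confirms this matches $\nu_1,\nu_2,\nu_3$.

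The genuine gap is the last step: you never verify that the closed form \eqref{eq:nuk} satisfies this initial value problem for general $k$, and you say yourself that you expect it to be ``the main technical obstacle.'' That verification is the entire nontrivial content of Biane's theorem; without it, your argument only shows that the large-$N$ limit exists and solves the recursion, not that it equals $\nu_k(s)$. The ``alternative route'' you float --- identifying $\psi_k(s)$ with $\tau(u_s^k)$ and invoking Biane's spectral computation --- is not a shortcut: it is precisely what citing \cite{Biane:freeBm}, as the paper does, already amounts to, and you would still need to know that Biane's $\tau(u_s^k)$ is the polynomial in \eqref{eq:nuk}. So the proposal is a sound and correctly reasoned reduction, but it does not constitute a proof until the combinatorial ODE check (e.g., via the Pascal and Vandermonde-type manipulations you mention) is actually carried out.
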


For $P\in\Cuuv$, let $P(u;\mathbf{1})=P(u;\mathbf{v})_{v_k=1,k\neq 0}$. Using the intertwining formula (\ref{eq:intertwineUNexp}) for the $\U(N)$ case and Theorem \ref{thm:biane}, we have
\begin{align}
\nu_k(s)&=\lim_{N\to\infty}\int_{\U(N)}\tr (U^k)\, \rho_s^{\U(N)}(U)\, dU=\lim_{N\to\infty}(e^{\frac{s}{2}\Delta_{\U(N)}}\tr[(\cdot)^k])(I_N)\\
&=\lim_{N\to\infty}(e^{\frac{s}{2}(\mathcal{L}_0-\frac{1}{N^2}(2\mathcal{K}_1^-+\mathcal{K}_2^-))}v_k)(\mathbf{1}).
\end{align}
Since $\mathcal{L}_0$, $\mathcal{K}_1^-$, and $\mathcal{K}_2^-$ are linear operators which preserve the finite-dimensional vector space $\C_k[u,u^{-1};\mathbf{v}]$, we consider $(e^{\frac{s}{2}(\mathcal{L}_0-\frac{1}{N^2}(2\mathcal{K}_1^-+\mathcal{K}_2^-))}v_k)(\mathbf{1})$ to be the evaluation of the linear functional $\varphi(Q):=Q(\mathbf{1})$ on the finite-dimensional space $\C_m[\mathbf{v}]$. Hence Corollary \ref{cor:expmatrixnorm} applies, so taking the limit as $N\to\infty$, we have
\begin{equation} \label{eq:bianev2}
\nu_k(s)=(e^{\frac{s}{2}\mathcal{L}_0}v_k)(\mathbf{1}). 
\end{equation}

This yields the following version of Theorem \ref{thm:biane} for the $\SO(N,\R)$ case, which is due to L\'{e}vy \cite{Levy:master}. 

\begin{lemma}[L\'{e}vy, \cite{Levy:master}] \label{lem:bianeSON}
	For $s>0$ and $k\in\Z$, 
	\begin{equation}
	\lim_{N\to\infty}\int_{\SO(N,\R)}\tr (A^k)\, \rho_s^{\SO(N,\R)}(A)\, dA=\nu_k\left(s\right).
	\end{equation}
\end{lemma}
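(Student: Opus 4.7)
The approach will mirror the derivation of \eqref{eq:bianev2} given immediately above the lemma, simply replacing the $\U(N)$ intertwining formula with the $\SO(N,\R)$ one from Theorem \ref{thm:intertwine1}. The starting point is the specialization of \eqref{eq:heatkernel} at $x = I_N$, which gives
\begin{equation*}
\int_{\SO(N,\R)} \tr(A^k)\,\rho_s^{\SO(N,\R)}(A)\, dA = \left(e^{\frac{s}{2}\Delta_{\SO(N,\R)}}\tr[(\cdot)^k]\right)(I_N).
\end{equation*}
Since $\tr(A^k) = [v_k]_N(A)$ is the trace polynomial associated with $P(u;\mathbf{v}) = v_k$, the intertwining formula \eqref{eq:intertwine1exp} rewrites this as $([e^{\frac{s}{2}\mathcal{D}_N^{(1)}} v_k]_N)(I_N)$.

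Next I would reduce the right-hand side to a scalar expression built purely from $\mathcal{L}_0$. A quick inspection of Definition \ref{def:Doperators} shows that all three operators $\mathcal{L}_0$, $\mathcal{L}_1^{(1)}$, $\mathcal{L}_2^{(1)}$ preserve the subspace $\C[\mathbf{v}] \subseteq \Cuuv$: the pieces containing $u$-derivatives or $u$-multiplications ($\mathcal{N}_1$, $\mathcal{Y}_1$, $\mathcal{Z}_2$, $\mathcal{K}_1$) annihilate polynomials without $u$-dependence, while the remaining pieces ($\mathcal{N}_0$, $\mathcal{Y}_2$, $\mathcal{Z}_1$, $\mathcal{K}_2$) act only in the $\mathbf{v}$ variables. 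Consequently $e^{\frac{s}{2}\mathcal{D}_N^{(1)}} v_k$ lies in $\C[\mathbf{v}]$, and since $\tr(I_N^j) = 1$ for all $j$, its trace polynomial evaluation at $I_N$ is simply
\begin{equation*}
[e^{\frac{s}{2}\mathcal{D}_N^{(1)}} v_k]_N(I_N) = \bigl(e^{\frac{s}{2}\mathcal{D}_N^{(1)}} v_k\bigr)(\mathbf{1}).
\end{equation*}

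For the large-$N$ limit I would invoke Corollary \ref{cor:DNinvariant} to restrict to the finite-dimensional invariant subspace $\C_{|k|}[\mathbf{v}]$ and then apply the functional-form estimate \eqref{eq:expfuncnorm} of Corollary \ref{cor:expmatrixnorm} with the evaluation functional $\varphi(Q) := Q(\mathbf{1})$ and the operators $L_0 = \tfrac{s}{2}\mathcal{L}_0$, $L_1 = \tfrac{s}{2}\mathcal{L}_1^{(1)}$, $L_2 = \tfrac{s}{2}\mathcal{L}_2^{(1)}$. This yields
\begin{equation*}
\bigl(e^{\frac{s}{2}\mathcal{D}_N^{(1)}} v_k\bigr)(\mathbf{1}) = \bigl(e^{\frac{s}{2}\mathcal{L}_0} v_k\bigr)(\mathbf{1}) + O(1/N),
\end{equation*}
and passing to $N\to\infty$, the right-hand side equals $\nu_k(s)$ by \eqref{eq:bianev2}.

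There is no real obstacle here: the entire argument is a direct assembly of previously established ingredients (the intertwining formula, the invariance of $\C[\mathbf{v}]$, and Corollary \ref{cor:expmatrixnorm}), together with the already-derived identity $(e^{\frac{s}{2}\mathcal{L}_0} v_k)(\mathbf{1}) = \nu_k(s)$. The only detail worth checking carefully is that the same $\mathcal{L}_0$ appears in both the $\U(N)$ and $\SO(N,\R)$ intertwining formulas so that the identification of the limit via \eqref{eq:bianev2} is legitimate; this is explicitly noted in the introduction and Definition \ref{def:Doperators}.
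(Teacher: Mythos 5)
Your proposal is correct and follows essentially the same route as the paper's own proof: reduce to $(e^{\frac{s}{2}\mathcal{D}_N^{(1)}}v_k)(\mathbf{1})$ via the intertwining formula, then apply Corollary \ref{cor:expmatrixnorm} to drop the $1/N$ and $1/N^2$ terms in the limit, landing on $(e^{\frac{s}{2}\mathcal{L}_0}v_k)(\mathbf{1}) = \nu_k(s)$ by \eqref{eq:bianev2}. The extra detail you supply — that $\mathcal{D}_N^{(1)}$ preserves $\C[\mathbf{v}]$ because $\mathcal{N}_1,\mathcal{Y}_1,\mathcal{Z}_2,\mathcal{K}_1$ annihilate it while $\mathcal{N}_0,\mathcal{Y}_2,\mathcal{Z}_1,\mathcal{K}_2$ act only in $\mathbf{v}$ — is a correct justification of the step the paper performs tacitly when passing from the matrix evaluation at $I_N$ to the scalar evaluation at $\mathbf{1}$.
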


\begin{proof}
	We have
	\begin{align*}
	\lim_{N\to\infty}\int_{\SO(N,\R)}\tr (A^k)\, \rho_s^{\SO(N,\R)}(A)\, dA&=\lim_{N\to\infty}(e^{\frac{s}{2}\Delta_{\SO(N,\R)}}\tr[(\cdot)^k])(I_N)\\
	&=\lim_{N\to\infty}(e^{\frac{s}{2}(\mathcal{L}_0+\frac{1}{N}\mathcal{L}_1^{(1)}+\frac{1}{N^2}\mathcal{L}_2^{(1)})}v_k)(\mathbf{1})=(e^{\frac{s}{2}\mathcal{L}_0}v_k)(\mathbf{1})\\
	&=\nu_k\left(s\right),
	\end{align*}
	where we have again applied Corollary \ref{cor:expmatrixnorm} as in (\ref{eq:bianev2}). 
\end{proof}

\begin{lemma} \label{lem:trevalQ} 
	For $Q\in\C [\mathbf{v}]$ and $\tau\in\C$, we have
	\begin{equation}
	\left(e^{\frac{\tau}{2}\mathcal{L}_0}Q\right)(\mathbf{1})=\pi_{\tau}Q.
	\end{equation}
\end{lemma}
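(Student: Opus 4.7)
The plan is to combine three ingredients already in the excerpt: the identity $\nu_k(s)=(e^{\frac{s}{2}\mathcal{L}_0}v_k)(\mathbf{1})$ from equation \eqref{eq:bianev2}, the partial product rule for $\mathcal{L}_0$ (Corollary~\ref{cor:exppartialprodrule} with $\alpha=1$, $\beta=0$), and linearity. Since $\pi_\tau$ acts on $\C[\mathbf{v}]$ purely by substituting $v_k\mapsto\nu_k(\tau)$, showing that $e^{\frac{\tau}{2}\mathcal{L}_0}$ is ``multiplicative after evaluating at $\mathbf{1}$'' will reduce the statement to the single-variable identity for $v_k$.

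First, I would upgrade \eqref{eq:bianev2} from $s>0$ to arbitrary $\tau\in\C$. By Corollary~\ref{cor:DNinvariant}, $\mathcal{L}_0$ preserves the finite-dimensional subspace $\C_{|k|}[u,u^{-1};\mathbf{v}]$, so $\tau\mapsto e^{\frac{\tau}{2}\mathcal{L}_0}v_k$ is an entire $\C_{|k|}[u,u^{-1};\mathbf{v}]$-valued function of $\tau$, and hence $\tau\mapsto(e^{\frac{\tau}{2}\mathcal{L}_0}v_k)(\mathbf{1})$ is entire. On the other hand, from its explicit formula \eqref{eq:nuk}, $\nu_k$ is entire. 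Since the two entire functions agree on $(0,\infty)$ by \eqref{eq:bianev2}, they agree on all of $\C$:
\begin{equation}\label{eq:planstep1}
\nu_k(\tau)=\bigl(e^{\frac{\tau}{2}\mathcal{L}_0}v_k\bigr)(\mathbf{1}),\qquad \tau\in\C,\;k\in\Z.
\end{equation}

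Next, I would apply Corollary~\ref{cor:exppartialprodrule} with $\alpha=1$ and $\beta=0$ inductively. Since any monomial in $\C[\mathbf{v}]$ has the form $v_{k_1}\cdots v_{k_n}$, and each partial product $v_{k_1}\cdots v_{k_j}$ lies in $\C[\mathbf{v}]\subseteq\Cuuv$ while $v_{k_{j+1}}\in\C[\mathbf{v}]$, repeated application of \eqref{eq:expprodruleL0L1} yields
\begin{equation*}
e^{\frac{\tau}{2}\mathcal{L}_0}(v_{k_1}\cdots v_{k_n})=\prod_{j=1}^{n} e^{\frac{\tau}{2}\mathcal{L}_0}v_{k_j}.
\end{equation*}
Evaluating at $\mathbf{1}$ and invoking \eqref{eq:planstep1} gives
\begin{equation*}
\bigl(e^{\frac{\tau}{2}\mathcal{L}_0}(v_{k_1}\cdots v_{k_n})\bigr)(\mathbf{1})=\prod_{j=1}^{n}\nu_{k_j}(\tau)=\pi_\tau(v_{k_1}\cdots v_{k_n}).
\end{equation*}

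Finally, both $Q\mapsto(e^{\frac{\tau}{2}\mathcal{L}_0}Q)(\mathbf{1})$ and $Q\mapsto\pi_\tau Q$ are $\C$-linear on $\C[\mathbf{v}]$, so agreement on monomials extends to all of $\C[\mathbf{v}]$. There is no real obstacle: the one subtlety is confirming that the partial product rule \eqref{eq:prodruleL0L1} genuinely applies here (it does, because we only ever multiply an element of $\C[\mathbf{v}]$ by another element of $\C[\mathbf{v}]\subseteq\Cuuv$, which is the hypothesis of Theorem~\ref{thm:partialprodrule}), and making sure the analytic continuation is justified in a finite-dimensional invariant subspace rather than on all of $\Cuuv$.
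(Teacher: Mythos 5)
Your proposal is correct and follows essentially the same route as the paper: both reduce to the single generators $v_k$ via the homomorphism property of $e^{\frac{\tau}{2}\mathcal{L}_0}$ (Theorem~\ref{thm:partialprodrule} / Corollary~\ref{cor:exppartialprodrule}), and both promote the real identity \eqref{eq:bianev2} to all of $\C$ by observing that $\tau\mapsto(e^{\frac{\tau}{2}\mathcal{L}_0}v_k)(\mathbf{1})$ is entire because $\mathcal{L}_0$ preserves the finite-dimensional subspace $\C_{|k|}[u,u^{-1};\mathbf{v}]$. The only cosmetic difference is the order in which the two steps are carried out.
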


\begin{proof}
	By Theorem \ref{thm:partialprodrule}, $e^{\frac{\tau}{2}\mathcal{L}_0}$ is a homomorphism of $\C[\mathbf{v}]$. Hence it suffices to show that
	\begin{equation} \label{eq:trevalQ}
	\left(e^{\tau\mathcal{L}_0}v_k\right)(\mathbf{1})=\pi_{\tau}(v_k)=\nu_k\left(\tau\right)
	\end{equation}
	for each $k\in\Z$. To see that it holds for all $\tau\in\C$, we expand the left-hand side as a power series in $\tau$. Fix a norm $\|\cdot\|_{W_k}$ on the finite-dimensional vector space $W_k:=\C_k[u,u^{-1};\mathbf{v}]$. Let $\|\cdot\|_{\End (W_k)}$ denote the operator norm on $\End(W_k)$ induced by $\|\cdot\|_{W_k}$, and let $\|\cdot\|_{W_k^*}$ denote the corresponding dual norm on $W_k$. Let $\varphi$ be the linear functional acting on $W_k$ defined by $\varphi(P)=P(\mathbf{1})$. Then for any $\tau\in\C$, 
	\begin{align*}
	\left|\left(e^{\frac{\tau}{2}\mathcal{L}_0}v_k\right)(\mathbf{1})\right|&=\left|\sum_{n=0}^\infty\left( \frac{1}{n!}\left(\frac{\tau}{2}\right)^n(\mathcal{L}_0)^nv_k\right)(\mathbf{1})\right|=\left| \sum_{n=0}^\infty \frac{1}{n!}\left(\frac{\tau}{2}\right)^n\varphi((\mathcal{L}_0)^nv_k)\right|\\
	&\leq\sum_{n=0}^\infty\frac{1}{n!}\left(\frac{|\tau|}{2}\right)^n\|\varphi\|_{W_k^*}\|\mathcal{L}_0\|^n_{\End(W_k)}\|v_k\|_{W_k},
	\end{align*}
	and the right-hand side converges by the ratio test. Thus the function $\tau\mapsto\left(e^{\frac{\tau}{2}\mathcal{L}_0}v_k\right)(\mathbf{1})$ is analytic on the complex plane. By (\ref{eq:bianev2}), this function agrees with the entire function $\tau\mapsto\nu_k\left(\tau\right)$ for $\tau\in\R$, and so it also agrees for all $\tau\in\C$.
\end{proof}

\begin{lemma} \label{lem:treval2} 
	Let $s>0$ and $\tau=t+i\theta\in\mathbb{D}(s,s)$. For any $Q\in\C[\mathbf{v}]$, 
	\begin{gather}
	\left(e^{\wL_0^{s,\tau}}\iota(Q)\right)(\mathbf{1})=\pi_{s-\tau}Q,\\
	\left(e^{\wL_0^{s,\tau}}\iota ^*(Q)\right)(\mathbf{1})=\overline{\pi_{s-\tau}Q}. \label{eq:treval2star}
	\end{gather}
\end{lemma}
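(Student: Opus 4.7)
The plan is to reduce both identities to the corresponding statement of Lemma \ref{lem:trevalQ} on $\C[\mathbf{v}]$, by combining the intertwining formulas of Theorems \ref{thm:intertwine1} and \ref{thm:intertwiningII} with a large-$N$ limit argument. The key observation is that $[\iota(Q)]_N(A) = Q_N(A)$ is a \emph{holomorphic} function of $A\in\SO(N,\C)$, since each $\tr(A^k)$ is holomorphic (using $A^{-1}=A^\intercal$ on $\SO(N,\C)$), and consequently $[\iota^*(Q)]_N(A)=\overline{Q_N(A)}$ is \emph{antiholomorphic}. For any holomorphic $f$ on $\SO(N,\C)$ one has $\wY_j f = i\wX_j f$, so a direct computation yields
\[
\ASONst f = \bigl[(s-\tfrac{t}{2})-\tfrac{t}{2}-i\theta\bigr]\LSON f = (s-\tau)\LSON f,
\]
and the analogous formula $\ASONst f = (s-\overline{\tau})\LSON f$ for antiholomorphic $f$ (using $\wY_j f=-i\wX_j f$).

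Iterating the intertwining identity \eqref{eq:intertwine2} and summing the resulting series (which converges pointwise since $\wL_0^{s,\tau}+\frac{1}{N}\wL_1^{s,\tau}+\frac{1}{N^2}\wL_2^{s,\tau}$ preserves the finite-dimensional subspace $\mathscr{W}_m$ for $m=\deg Q$) gives
\[
e^{\frac{1}{2}\ASONst}[\iota(Q)]_N = \bigl[e^{\wL_0^{s,\tau}+\frac{1}{N}\wL_1^{s,\tau}+\frac{1}{N^2}\wL_2^{s,\tau}}\iota(Q)\bigr]_N.
\]
But the left side equals $e^{\frac{1}{2}\ASONst}Q_N=e^{\frac{s-\tau}{2}\LSON}Q_N=\bigl[e^{\frac{s-\tau}{2}\mathcal{D}_N^{(1)}}Q\bigr]_N$ by the holomorphic identity above together with the exponentiated intertwining formula \eqref{eq:intertwine1exp}. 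Evaluating both expressions at $A=I_N$, where every $v_\varepsilon$ and every $v_k$ reduces to $\tr(I_N)=1$, gives the scalar identity
\[
\bigl(e^{\wL_0^{s,\tau}+\frac{1}{N}\wL_1^{s,\tau}+\frac{1}{N^2}\wL_2^{s,\tau}}\iota(Q)\bigr)(\mathbf{1}) = \bigl(e^{\frac{s-\tau}{2}\mathcal{D}_N^{(1)}}Q\bigr)(\mathbf{1})
\]
for each $N$.

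Next I pass to the limit $N\to\infty$. Both operators have the form $L_0+\frac{1}{N}L_1+\frac{1}{N^2}L_2$ acting on finite-dimensional spaces ($\mathscr{W}_m$ on the left, $\C_m[u,u^{-1};\mathbf{v}]$ on the right), so Corollary \ref{cor:expmatrixnorm} applies: the left-hand side converges to $(e^{\wL_0^{s,\tau}}\iota(Q))(\mathbf{1})$ and the right-hand side converges to $(e^{\frac{s-\tau}{2}\mathcal{L}_0}Q)(\mathbf{1})=\pi_{s-\tau}Q$ by Lemma \ref{lem:trevalQ}. This proves the first identity.

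For the second identity I run the same argument with $P=\iota^*(Q)$, starting from $[\iota^*(Q)]_N=\overline{Q_N}$. Using the antiholomorphic form of the key computation together with the fact that $\LSON$, being built from real left-invariant vector fields, commutes with complex conjugation, I obtain
\[
e^{\frac{1}{2}\ASONst}\overline{Q_N} = e^{\frac{s-\overline{\tau}}{2}\LSON}\overline{Q_N} = \overline{e^{\frac{s-\tau}{2}\LSON}Q_N} = \overline{\bigl[e^{\frac{s-\tau}{2}\mathcal{D}_N^{(1)}}Q\bigr]_N},
\]
and the same evaluation at $I_N$ followed by the $N\to\infty$ limit gives $(e^{\wL_0^{s,\tau}}\iota^*(Q))(\mathbf{1}) = \overline{\pi_{s-\tau}Q}$. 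The main obstacle here is purely bookkeeping: one must check that exponentiation of the intertwining formula is justified (which reduces to the trace-degree-preserving property of all the operators involved) and that Corollary \ref{cor:expmatrixnorm} is correctly applied to kill the $1/N$ and $1/N^2$ corrections on each side.
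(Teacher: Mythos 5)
Your proof is correct and follows essentially the same route as the paper's: the observation that holomorphicity reduces $\ASONst$ to $(s-\tau)\LSON$, the combination of intertwining formulas \eqref{eq:intertwine2} and \eqref{eq:intertwine1exp}, evaluation at $I_N$, the large-$N$ limit via Corollary \ref{cor:expmatrixnorm}, and the appeal to Lemma \ref{lem:trevalQ}. The paper simply says \eqref{eq:treval2star} is "proved similarly" via $\widetilde{JX}f=-i\widetilde{X}f$ for antiholomorphic $f$; you have supplied the details (the $(s-\overline{\tau})$ factor and the commutation of $\LSON$ with conjugation), which match what the paper intends.
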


\begin{proof}
	First, observe that if $f:\SO(N,\C)\to\MNC$ is holomorphic, then $\widetilde{JX}f=i\widetilde{X}f$ for all $X\in\so(N,\R)$. Thus
	\begin{equation*}
	\ASONst f|_{\SO(N,\R)}=\sum_{X\in\bSON}\left[\left(s-\frac{t}{2}\right)\wX^2-\frac{t}{2}\wX^2-i\theta\wX^2\right]f=(s-\tau)\LSON f.
	\end{equation*}
	In particular, since $Q_N$ is a trace polynomial, 
	\begin{equation} \label{eq:expQN}
	e^{\frac{1}{2}\ASONst}Q_N=e^{\frac{1}{2}(s-\tau)\LSON}Q_N.
	\end{equation}
	Applying intertwining formulas (\ref{eq:intertwine2}) and (\ref{eq:intertwine1exp}) to the left side and using (\ref{eq:intertwineword}) gives
	\begin{equation}
	\left[e^{\wL_0^{s,\tau}+\frac{1}{N}\wL_1^{s,\tau}+\frac{1}{N^2}\wL_2^{s,\tau}}\iota(Q)\right]_N=[e^{\frac{1}{2}(s-\tau)\mathcal{D}_N^{(1)}}Q]_N.
	\end{equation}
	Hence for $Q\in\C[\mathbf{v}]$, 
	\begin{equation}
	\left(e^{\wL_0^{s,\tau}+\frac{1}{N}\wL_1^{s,\tau}+\frac{1}{N^2}\wL_2^{s,\tau}}\iota(Q)\right)(\mathbf{1})=\left(e^{\frac{1}{2}(s-\tau)(\mathcal{L}_0+\frac{1}{N}\mathcal{L}_1^{(1)}+\frac{1}{N^2}\mathcal{L}_2^{(1)})}Q\right)(\mathbf{1}).
	\end{equation}
	If $\deg Q=m$, then we can view the left and right sides as the evaluation of a linear functional $\varphi(P)=P(\mathbf{1})$ on the finite-dimensional spaces $\mathscr{W}_m$ and $\C_m[\mathbf{v}]$, respectively. Thus we may apply Corollary \ref{cor:expmatrixnorm} to take the limit as $N\to\infty$, which yields
	\begin{equation}
	\left(e^{\wL_0^{s,\tau}}\iota(Q)\right)(\mathbf{1})=\left(e^{\frac{1}{2}(s-\tau)\mathcal{L}_0}Q\right)(\mathbf{1}).
	\end{equation}
	By Lemma \ref{lem:trevalQ}, the expression on the right is $\pi_{s-\tau}Q$. Equation \eqref{eq:treval2star} is proved similarly, using the fact that if $f:\SO(N,\C)\to\M_N(\C)$ is antiholomorphic, then $\widetilde{JX}f=-i\widetilde{X}f$ for all $X\in\so(N,\R)$.
\end{proof}

\begin{theorem} \label{thm:bianeSONC}
	Let $s>0$, $\tau\in\mathbb{D}(s,s)$, and $k\in\Z$. Then
	\begin{equation}
	\lim_{N\to\infty}\int_{\SO(N,\C)}\tr(A^k)\,\mu_{s,\tau}^{\SO(N,\C)}(A)\, dA=\nu_k\left(s-\tau\right).
	\end{equation}
\end{theorem}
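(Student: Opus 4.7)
The plan is to mimic the argument used for Lemma \ref{lem:bianeSON}, but replacing the intertwining formula for $\Delta_{\SO(N,\R)}$ by the intertwining formula for $\ASONst$ from Theorem \ref{thm:intertwiningII}, and then reducing the large-$N$ limit to Lemma \ref{lem:treval2}.

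First, I would express the integral as a heat-semigroup evaluation. By \eqref{eq:heatKC}, setting $z=I_{2N}$ (actually $I_N\in\SO(N,\C)$) and $f=V_k$, we have
\begin{equation*}
\int_{\SO(N,\C)}\tr(A^k)\,\mu_{s,\tau}^{\SO(N,\C)}(A)\,dA = \bigl(e^{\frac{1}{2}\ASONst}V_k\bigr)(I_N),
\end{equation*}
where $V_k(A)=\tr(A^k)$. Observing that $V_k=[\iota(v_k)]_N$ in the word polynomial notation, the next step is to apply the intertwining formula of Theorem \ref{thm:intertwiningII} together with the power series definition of the heat semigroup on trace polynomials (as in Remark \ref{rem:SBpowerseries}) to obtain
\begin{equation*}
\bigl(e^{\frac{1}{2}\ASONst}V_k\bigr)(I_N)=\Bigl[e^{\wL_0^{s,\tau}+\frac{1}{N}\wL_1^{s,\tau}+\frac{1}{N^2}\wL_2^{s,\tau}}\iota(v_k)\Bigr]_N(I_N).
\end{equation*}
Since $V_\varepsilon(I_N)=\tr(I_N^{\varepsilon_1}\cdots I_N^{\varepsilon_m})=1$ for every $\varepsilon\in\mathscr{E}$, evaluating $P_N$ at $I_N$ corresponds to the linear functional $\varphi(P)=P(\mathbf{1})$ on $\mathscr{W}$. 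Hence the right-hand side equals
\begin{equation*}
\Bigl(e^{\wL_0^{s,\tau}+\frac{1}{N}\wL_1^{s,\tau}+\frac{1}{N^2}\wL_2^{s,\tau}}\iota(v_k)\Bigr)(\mathbf{1}).
\end{equation*}

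Next, I would pass to the large-$N$ limit. Since $\iota(v_k)\in\mathscr{W}_{|k|}$, which is finite-dimensional, and since the operators $\wL_0^{s,\tau},\wL_1^{s,\tau},\wL_2^{s,\tau}$ preserve trace degree (so in particular leave $\mathscr{W}_{|k|}$ invariant), Corollary \ref{cor:expmatrixnorm} applies to the linear functional $\varphi$ on $\mathscr{W}_{|k|}$ and yields
\begin{equation*}
\lim_{N\to\infty}\Bigl(e^{\wL_0^{s,\tau}+\frac{1}{N}\wL_1^{s,\tau}+\frac{1}{N^2}\wL_2^{s,\tau}}\iota(v_k)\Bigr)(\mathbf{1})=\bigl(e^{\wL_0^{s,\tau}}\iota(v_k)\bigr)(\mathbf{1}).
\end{equation*}
Finally, applying Lemma \ref{lem:treval2} with $Q=v_k$ gives $\bigl(e^{\wL_0^{s,\tau}}\iota(v_k)\bigr)(\mathbf{1})=\pi_{s-\tau}(v_k)=\nu_k(s-\tau)$, which completes the proof.

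The main point to verify carefully is that $\wL_j^{s,\tau}$ preserves trace degree, so that Corollary \ref{cor:expmatrixnorm} may legitimately be invoked on the finite-dimensional subspace $\mathscr{W}_{|k|}$; this is inspected directly from Definition of $Q^{s,\tau}_\varepsilon$, $R^{s,\tau}_\varepsilon$, $S^{s,\tau}_{\varepsilon,\delta}$ in Theorem \ref{thm:intertwiningIIpoly}, where the monomials appearing have trace degrees $|\varepsilon|$, $|\varepsilon|$, and $|\varepsilon|+|\delta|$, respectively (exactly matching the derivatives $\partial/\partial v_\varepsilon$ and $\partial^2/\partial v_\varepsilon\partial v_\delta$). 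Beyond that, everything follows by chaining the cited results in the order above.
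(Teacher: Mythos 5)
Your proposal is correct and follows the paper's own proof almost line for line: express the integral as $(e^{\frac{1}{2}\ASONst}V_k)(I_N)$ via \eqref{eq:heatKC}, rewrite using the intertwining formula \eqref{eq:intertwine2}, pass to the limit with Corollary \ref{cor:expmatrixnorm} on the finite-dimensional space $\mathscr{W}_{|k|}$, and finish with Lemma \ref{lem:treval2}. The extra care you take in checking that the $\wL_j^{s,\tau}$ preserve trace degree and that evaluation at $I_N$ realizes the functional $\varphi(P)=P(\mathbf{1})$ is a nice touch, but it is the same argument.
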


\begin{proof}
	We use (\ref{eq:heatKC}) and intertwining formula (\ref{eq:intertwine2}) to get
	\begin{align}
	\int_{\SO(N,\C)}\tr(A^k)\,\mu_{s,\tau}^{\SO(N,\C)}(A)\, dA&=\left(e^{\frac{1}{2}\ASONst}\tr[(\cdot)^k]\right)(I_N)\nonumber\\
	&=\left(e^{\wL_0^{s,\tau}+\frac{1}{N}\wL_1^{s,\tau}+\frac{1}{N^2}\wL_2^{s,\tau}}\iota(v_k)\right)(\mathbf{1}).\nonumber
	\end{align}
	Viewing the last expression as a linear functional acting on $\iota(v_k)\in\mathscr{W}_k$, we apply Corollary \ref{cor:expmatrixnorm} to take the limit as $N\to\infty$. Combining this with Lemma \ref{lem:treval2}, we have
	\begin{align}
	\lim_{N\to\infty}\int_{\SO(N,\C)}\tr(A^k)\,\mu_{s,\tau}^{\SO(N,\C)}(A)\, dA&=\left(e^{\wL_0^{s,\tau}}\iota(v_k)\right)(\mathbf{1})=\pi_{s-\tau}v_k=\nu_k\left(s-\tau\right).\nonumber\qedhere
	\end{align}
\end{proof}

\subsection{The free Segal-Bargmann transform for $\SO(N,\R)$} \label{subsec:freeSBtransf}

In this section, we prove our main result on the large-$N$ limit of the Segal-Bargmann transform for $\SO(N,\R)$. The free Segal-Bargmann transform $\Gst$ and the free inverse Segal-Bargmann transform $\Hst$ appearing in Theorem \ref{thm:freeSBlimit} have explicit formulas, which we first describe.  

\begin{definition}
	Let $\mathcal{L}_0$ be the pseudodifferential operator on $\Cuuv$ from Definition \ref{def:Doperators}. We define the {\bf free Segal-Bargmann transform} to be the map $\Gst:\C[u,u^{-1}]\to\C[u,u^{-1}]$ given by
	\begin{equation}
	\Gst=\pi_{s-\tau}\circ e^{\frac{\tau}{2}\mathcal{L}_0}, \label{eq:Gst}
	\end{equation}
	and we define the {\bf free inverse Segal-Bargmann transform} to be the map $\Hst:\C[u,u^{-1}]\to\C[u,u^{-1}]$ given by
	\begin{equation}
	\Hst=\pi_{s}\circ e^{- \frac{\tau}{2}\mathcal{L}_0}. \label{eq:Hst}
	\end{equation}
\end{definition}

Before proving Theorem \ref{thm:freeSBlimit}, we require the following result. 

\begin{theorem} \label{thm:tracelimit} 
	Let $s>0$ and $\tau=t+i\theta\in\mathbb{D}(s,s)$. For any $P\in\Cuuv$, 
	\begin{align}
	&\|P_N-[\pi_{s}P]_N\|^2_{L^2(\rho_{s}^{\SO(N,\R)})}=O\left(\frac{1}{N^2}\right),\hspace{.5cm}\mbox{and} \label{eq:tracelimitSON}\\
	&\|P_N-[\pi_{s-\tau}P]_N\|^2_{L^2(\mu_{s,\tau}^{\SO(N,\C)})}=O\left(\frac{1}{N^2}\right). \label{eq:tracelimitSONC}
	\end{align}
\end{theorem}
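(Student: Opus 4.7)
The strategy is to realize each squared $L^2$-norm as the heat-kernel integral of a quadratic form that vanishes to second order in the free limit; the ring-homomorphism structure of the leading intertwining operators then forces the $O(1/N^2)$ rate. Write $R := P - \pi_{s-\tau}P \in \Cuuv$ (with $\pi_s$ for the real statement) and expand $R = \sum_k \tilde{a}_k(\mathbf{v})\,u^k$, where $\tilde{a}_k(\mathbf{v}) := a_k(\mathbf{v}) - a_k(\nu(s-\tau))$. Each $\tilde{a}_k$ vanishes identically upon substituting $v_m \mapsto \nu_m(s-\tau)$, and this is the engine of the proof.

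For \eqref{eq:tracelimitSONC}, Lemma~\ref{lem:sesquilinearform}, \eqref{eq:heatKC}, and the exponential form of Theorem~\ref{thm:intertwiningII} give
\begin{equation*}
\|P_N - [\pi_{s-\tau}P]_N\|^2_{L^2(\mu_{s,\tau}^{\SO(N,\C)};\MNC)} = \left(e^{\wL_0^{s,\tau}+\frac{1}{N}\wL_1^{s,\tau}+\frac{1}{N^2}\wL_2^{s,\tau}}\mathcal{B}(R,R)\right)(\mathbf{1}),
\end{equation*}
while unpacking $\mathcal{B}$ via $[\mathcal{B}(P,Q)]_N(A) = \tr(P_N(A)Q_N(A)^*)$ yields the factorization
\begin{equation*}
\mathcal{B}(R,R) = \sum_{k,j} \iota(\tilde{a}_k)\,\iota^*(\tilde{a}_j)\, v_{\varepsilon(k,j)} \in \mathscr{W}.
\end{equation*}
Because $\wL_0^{s,\tau}$ and $\wL_1^{s,\tau}$ are first-order differential operators on $\mathscr{W}$ (by the construction in Theorem~\ref{thm:intertwiningII}), the argument of Corollary~\ref{cor:exppartialprodrule} shows that $\phi^{(N)} := e^{\wL_0^{s,\tau}+\wL_1^{s,\tau}/N}$ is a $\C$-algebra homomorphism of $\mathscr{W}$; composed with evaluation at $\mathbf{1}$ it yields a homomorphism $\Phi^{(N)}\colon \mathscr{W}\to\C$. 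By \eqref{eq:expfuncnorm3} and Lemma~\ref{lem:treval2},
\begin{equation*}
\Phi^{(N)}(\iota(\tilde{a}_k)) = \pi_{s-\tau}(\tilde{a}_k) + O(1/N) = O(1/N),
\end{equation*}
and likewise $\Phi^{(N)}(\iota^*(\tilde{a}_j)) = O(1/N)$, while $\Phi^{(N)}(v_{\varepsilon(k,j)}) = O(1)$. Multiplicativity of $\Phi^{(N)}$ then makes every summand of $\mathcal{B}(R,R)$ at most $O(1/N^2)$, and \eqref{eq:expfuncnorm2} absorbs the $\wL_2^{s,\tau}/N^2$ correction at the same cost.

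For \eqref{eq:tracelimitSON}, the argument is parallel but stays inside $\C[\mathbf{v}]$: on $\SO(N,\R)$, $A^* = A^{-1}$ collapses $\tr(R_NR_N^*)$ to $Q_N$ for
\begin{equation*}
Q(\mathbf{v}) = \sum_{k,j} \tilde{a}_k(\mathbf{v})\,\overline{\tilde{a}_j}(\mathbf{v})\, v_{k-j} \in \C[\mathbf{v}],
\end{equation*}
which manifestly vanishes to \emph{second} order at $\mathbf{v}=\nu(s)$. By \eqref{eq:heatkernel} and Theorem~\ref{thm:intertwine1}, the integral is $\bigl(e^{\frac{s}{2}\mathcal{D}_N^{(1)}}Q\bigr)(\mathbf{1})$; Corollary~\ref{cor:expmatrixnorm} removes $\mathcal{L}_2^{(1)}/N^2$ at cost $O(1/N^2)$, and Corollary~\ref{cor:exppartialprodrule} identifies the remainder with $Q(\tilde{\nu}^{(N)}(s))$, where $\tilde{\nu}_k^{(N)}(s) := \bigl(e^{\frac{s}{2}(\mathcal{L}_0+\mathcal{L}_1^{(1)}/N)}v_k\bigr)(\mathbf{1}) = \nu_k(s)+O(1/N)$ by Lemma~\ref{lem:trevalQ} and Corollary~\ref{cor:expmatrixnorm}. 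Second-order Taylor expansion of $Q$ at $\nu(s)$ then forces $Q(\tilde{\nu}^{(N)}(s)) = O(1/N^2)$.

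The principal obstacle is the explicit factorization of $\mathcal{B}(R,R)$ and the careful treatment of the conjugate-linear $\iota^*$ when invoking Lemma~\ref{lem:treval2}; once that is in hand, the ``washing away'' of the $1/N$ term is simply the statement that a $\C$-algebra homomorphism preserves products, combined with the operator-norm estimates of Corollary~\ref{cor:expmatrixnorm}.
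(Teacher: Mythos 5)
Your proposal is correct and follows essentially the same strategy as the paper: both express the squared $L^2$-norm as an evaluation $\bigl(e^{\wL_0^{s,\tau}+\frac{1}{N}\wL_1^{s,\tau}+\frac{1}{N^2}\wL_2^{s,\tau}}(\cdot)\bigr)(\mathbf{1})$ of a word polynomial that factors into terms of the form $\iota(\tilde{a}_k)\iota^*(\tilde{a}_j)v_{\varepsilon(k,j)}$, discard the $\wL_2^{s,\tau}/N^2$ correction at $O(1/N^2)$ cost via Corollary~\ref{cor:expmatrixnorm}, and then exploit that $e^{\wL_0^{s,\tau}+\frac{1}{N}\wL_1^{s,\tau}}$ is an algebra homomorphism (being the exponential of a first-order differential operator) so that the two factors which vanish under $e^{\wL_0^{s,\tau}}\big|_{\mathbf{1}}$ (by Lemma~\ref{lem:treval2}) each contribute $O(1/N)$, forcing the product to $O(1/N^2)$. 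The only cosmetic differences are that you carry the full sum $R=\sum_k\tilde a_k u^k$ rather than first reducing to a single monomial $u^kq(\mathbf{v})$ via the triangle inequality as the paper does, and that for \eqref{eq:tracelimitSON} you run a parallel Taylor-expansion argument inside $\C[\mathbf{v}]$ rather than simply specializing the complex case to $\tau=0$ and restricting to $\SO(N,\R)$, as the paper does at the end of its proof.
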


This result shows that with respect to the heat kernel measure, the space of trace polynomials concentrates onto the space of polynomials as $N\to\infty$. 

\begin{proof}
	We first show (\ref{eq:tracelimitSONC}). It suffices to prove the result for polynomials of the form $P(u;\mathbf{v})=u^kQ(\mathbf{v})$ for $k\in\Z$ and $Q\in\C[\mathbf{v}]$; (\ref{eq:tracelimitSONC}) then holds on all of $\Cuuv$ by the triangle inequality. For such a polynomial, $$P(u;\mathbf{v})-\pi_{s-\tau} P(u;\mathbf{v})=u^k[Q(\mathbf{v})-\pi_{s-\tau} Q]=u^kR_{s-\tau},$$
	where we let $R_{s-\tau} =Q-\pi_{s-\tau} Q$. Hence for $A\in\SO(N,\C)$,
	\begin{align}
	\|P_N(A)-[\pi_{s-\tau} P]_N(A)\|^2_{\MNC}&=\tr\left(A^k[R_{s-\tau}]_N(A)[R_{s-\tau}]_N(A)^*A^{*k}\right) \nonumber\\
	&=\tr(A^kA^{*k})[R_{s-\tau}]_N(A)[R_{s-\tau}]_N(A)^* \nonumber\\
	&=[v_{\varepsilon(k,k)}\iota(R_{s-\tau})\iota^*(R_{s-\tau})]_N(A). \label{eq:Rhst}
	\end{align}
	Along with intertwining formula (\ref{eq:intertwine2}), this allows us to compute
	\begin{align}
	\| P_N-[\pi_{s-\tau} P]_N\|^2_{L^2(\mu_{s,\tau}^{\SO(N,\C)})}&=e^{\frac{1}{2}\ASONst}\left(\| P_N-[\pi_{s-\tau} P]_N\|^2_{\MNC}\right)(I_N) \label{eq:APNist}\\
	&=\left(e^{\wL_0^{s,\tau}+\frac{1}{N}\wL_1^{s,\tau}+\frac{1}{N^2}\wL_2^{s,\tau}}(v_{\varepsilon(k,k)}\iota(R_{s-\tau})\iota^*(R_{s-\tau}))\right)(\mathbf{1})\label{eq:PNpist}
	\end{align}
	By the triangle inequality, the last line is bounded by
	\begin{align}
	&\left|\left(e^{\DNst}(v_{\varepsilon(k,k)}\iota(R_{s-\tau})\iota^*(R_{s-\tau}))\right)(\mathbf{1})-\left(e^{\LL}(v_{\varepsilon(k,k)}\iota(R_{s-\tau})\iota^*(R_{s-\tau}))\right)(\mathbf{1})\right| \label{eq:DNLzero}\\
	&\hspace{1cm}+\left|\left(e^{\wL_0^{s,\tau}+\frac{1}{N}\wL_1^{s,\tau}}(v_{\varepsilon(k,k)}\iota(R_{s-\tau})\iota^*(R_{s-\tau}))\right)(\mathbf{1})\right|.\label{eq:LLhom}
	\end{align}
	If $m=\deg v_{\varepsilon(k,k)}\iota(R_{s-\tau})\iota^*(R_{s-\tau})$, we can interpret \eqref{eq:DNLzero} as the evaluation of the linear functional $\varphi(R)=R(\mathbf{1})$ on $\mathscr{W}_{m}$, so by Corollary \ref{cor:expmatrixnorm}, 
	\begin{align} \label{eq:DNLzerobound}
	&\left|\left(e^{\DNst}(v_{\varepsilon(k,k)}\iota(R_{s-\tau})\iota^*(R_{s-\tau}))\right)(\mathbf{1})-\left(e^{\LL}(v_{\varepsilon(k,k)}\iota(R_{s-\tau})\iota^*(R_{s-\tau}))\right)(\mathbf{1})\right|\nonumber\\
	&\hspace{1cm}=O\left(\frac{1}{N^2}\right).
	\end{align}
	To bound \eqref{eq:LLhom}, recall from Theorem \ref{thm:intertwiningII} that  $\wL_0^{s,\tau}+\frac{1}{N}\wL_1^{s,\tau}$ is a first-order differential operator on $\mathscr{W}_{m}$, so $e^{\wL_0^{s,\tau}+\frac{1}{N}\wL_1^{s,\tau}}$ is an algebra homomorphism, i.e.
	\begin{align} 
	e^{\wL_0^{s,\tau}+\frac{1}{N}\wL_1^{s,\tau}}&(v_{\varepsilon(k,k)}\iota(R_{s-\tau})\iota^*(R_{s-\tau}))\nonumber\\
	&=e^\LL v_{\varepsilon(k,k)}\cdot e^\LL \iota(R_{s-\tau})\cdot e^\LL \iota^*(R_{s-\tau}).\label{eq:LLhomexpand}
	\end{align}
	For any $T\in\mathscr{W}$, 
	\begin{equation} \label{eq:LLhombound}
	\left|(e^{\LL}T)(\mathbf{1})\right|\leq \left| (e^\LL T)(\mathbf{1}) - (e^{\wL_0^{s,\tau}}T)(\mathbf{1})\right| + \left| (e^{\wL_0^{s,\tau}}T)(\mathbf{1})\right|
	\end{equation}
	By Corollary \ref{cor:expmatrixnorm}, the first term on the right side of \eqref{eq:LLhombound} is $O(1/N)$, while the second term is a constant not depending on $N$. Moreover, by Lemma \ref{lem:treval2}, 
	\begin{equation} \label{eq:treval2app}
	(e^{\wL_0^{s,\tau}}\iota(R_{s-\tau}))(\mathbf{1})=(e^{\wL_0^{s,\tau}}\iota^*(R_{s-\tau}))(\mathbf{1})=0. 
	\end{equation}
	Applying \eqref{eq:LLhombound} to each term in the product in \eqref{eq:LLhomexpand} and using \eqref{eq:treval2app}, we have that \eqref{eq:LLhom} is $O(1/N^2)$. Combining this with \eqref{eq:DNLzerobound} shows \eqref{eq:tracelimitSONC}. 
	
	Finally, observe that $\frac{s}{2}\LSON=\frac{1}{2}\mathcal{A}_{s,0}^{\SO(N,\C)}$. Setting $\tau=0$ in (\ref{eq:APNist}) and restricting to $\SO(N,\R)$ shows (\ref{eq:tracelimitSON}). 
\end{proof}

We can now prove Theorem \ref{thm:freeSBlimit} for $\SO(N,\R)$. 

\begin{proof}[Proof of Theorem \ref{thm:freeSBlimit} for $\SO(N,\R)$] \label{pf:thm:freeSBlimit}
	Let $f\in\C [u,u^{-1}]$. Using Theorem \ref{thm:SBintertwine} to rewrite $\BSONst f_N$ and applying the triangle inequality, we have
	\begin{align}
	\|\BSONst &f_N-[\Gst f]_N\|_\LmuSONst\nonumber\\
	&=\| [\exptDN f]_N-[\pi_{s-\tau}\circ\exptLz f]_N\|_\LmuSONst\nonumber\\
	&\leq \|[\exptDN f]_N-[\exptLz f]_N\|_\LmuSONst \\
	&\hspace{1.5cm}+\|[\exptLz f]_N-[\pi_{s-\tau}\circ\exptLz f]_N\|_\LmuSONst.\label{eq:SBlimittriangle}
	\end{align}
	By Theorem \ref{thm:tracelimit}, (\ref{eq:SBlimittriangle}) is $O(1/N)$, so it remains to show that
	\begin{equation} \label{eq:freeSBmaininequality}
	\|[\exptDN f]_N-[\exptLz f]_N\|^2_\LmuSONst =O\left(\frac{1}{N^2}\right).
	\end{equation}
	To this end, let $m=\deg f$ and $R^{(N)}=\exptDN f-\exptLz f\in\Cmuuv$. Recalling (\ref{eq:LtmuNstnorm}) and the sesquilinear form $\mathcal{B}$ of Lemma \ref{lem:sesquilinearform}, we have
	\begin{align}
	\|[\exptDN f]_N-[\exptLz f]_N\|^2_\LmuSONst &=\|[\RN]_N\|^2_\LmuSONst\nonumber\\
	&=e^{\frac{1}{2}\mathcal{A}^N_{s,\tau}}\left(\|[\RN]_N\|^2_{\MNC}\right)(I_N) \nonumber\\
	&=\left(e^{\DNst}\mathcal{B}(\RN,\RN)\right)(\mathbf{1}). \label{eq:RNLmunorm}
	\end{align}
	Fix norms $\|\cdot\|_{\mathscr{W}_{2m}}$ and $\|\cdot\|_{\Cmuuv}$ on $\mathscr{W}_{2m}$ and $\Cmuuv$, respectively. By Corollary \ref{cor:expmatrixnorm} applied to the linear functional $\varphi(P)=P(\mathbf{1})$ on $\mathscr{W}_{2m}$, there exists a constant $C=C(m,s,\tau)$ (not dependent on $N$) such that
	\begin{equation} \label{eq:DNLzbound}
	\left|\left(e^{\DNst}\mathcal{B}(\RN,\RN)\right)(\mathbf{1})-\left(e^{\Lzst}\mathcal{B}(\RN,\RN)\right)(\mathbf{1})\right|\leq\frac{C}{N}||\mathcal{B}(\RN,\RN)\|_{\mathscr{W}_{2m}}.
	\end{equation}
	Next, using the linear functional $\psi(P)=\left(\exptLz P)\right)(\mathbf{1})$ on $\mathscr{W}_{2m}$, we have
	\begin{equation} \label{eq:Lzbound}
	\left|\left(e^{\Lzst}\mathcal{B}(\RN,\RN)\right)(\mathbf{1})\right|\leq \|\psi\|_{\mathscr{W}_{2m}^*}\|\mathcal{B}(\RN,\RN)\|_{\mathscr{W}_{2m}}.
	\end{equation}
	Putting (\ref{eq:freeSBmaininequality}), (\ref{eq:DNLzbound}), and (\ref{eq:Lzbound}) together yields
	\begin{equation} \label{eq:DNLzboundII}
	\|[\exptDN f]_N-[\exptLz f]_N\|^2_\LmuSONst \leq \left(\|\psi\|_{\mathscr{W}_{2m}^*}+\frac{C}{N}\right)\|\mathcal{B}(\RN,\RN)\|_{\mathscr{W}_{2m}}.
	\end{equation}
	Since $\mathcal{B}:\Cmuuv\times\Cmuuv\to\mathscr{W}_{2m}$ is a sesquilinear form with finite domain and range, there exists a constant $C'=C'(m)$ (not dependent on $N$) such that
	\begin{equation} \label{eq:BPQbound}
	\|\mathcal{B}(P,Q)\|_{\mathscr{W}_{2m}}\leq C'\|P\|_{\Cmuuv}\|Q\|_{\Cmuuv}\hspace{.5cm}\mbox{for all }P,Q\in\Cmuuv.
	\end{equation}
	A final application of Corollary \ref{cor:expmatrixnorm} shows that there exists a constant $C''$ depending on $\mathcal{L}_0$, $\mathcal{L}_1^{(1)}$, $\mathcal{L}_2^{(1)}$, $\tau$, $\|\cdot\|_{\Cmuuv}$, and $f$, but again, not on $N$, such that 
	\begin{equation}
	\|\RN\|_{\Cmuuv}=\|\exptDN f-\exptLz f\|_{\Cmuuv}\leq\frac{C''}{N}.
	\end{equation}
	Hence
	\begin{equation} \label{eq:BRNRNbound}
	\|\mathcal{B}(\RN,\RN)\|_{\mathscr{W}_{2m}}\leq\|\RN\|_{\Cmuuv}^2\leq\frac{(C'')^2}{N^2}.
	\end{equation}
	Combining (\ref{eq:DNLzboundII}) and (\ref{eq:BRNRNbound}) proves (\ref{eq:freeSBmaininequality}). 
	
	To prove (\ref{eq:freeinverseSBlimit}), we observe that $(\BSONst)^{-1}f_N|_{\SO(N,\R)}=e^{-\frac{\tau}{2}\LSON}f_N$. By a similar triangle inequality argument using (\ref{eq:tracelimitSON}), it suffices to show
	\begin{equation} \label{eq:freeSBmaininequalityII}
	\|[e^{-\frac{\tau}{2}\mathcal{D}_N^{(1)}}f]_N-[e^{-\frac{\tau}{2}\mathcal{L}_0}f]_N\|^2_{L^2(\rho_s^{\SO(N,\R)})}=O\left(\frac{1}{N^2}\right).
	\end{equation}
	Replacing $\tau$ with $-\tau$ in the definition of $\RN$ and $(s,\tau)$ with $(s,0)$ from (\ref{eq:RNLmunorm}) onwards, the same argument as above proves (\ref{eq:freeSBmaininequalityII}). 
	
	For uniqueness, we first define seminorms on $\Cuuv$ by
	\begin{alignat}{2}
	&\|P\|_{s,N}^{(1)}=\|P_N\|_{L^2(\rho_s^{\SO(N,\R)})},\hspace{1cm}&&\|P\|_{s,N}^{(2)}=\|P_N\|_{L^2(\rho_s^{\U(N)})},\\
	&\|P\|_{s,\tau,N}^{(1)}=\|P_N\|_{L^2(\mu_{s,\tau}^{\SO(N,\C)})},\hspace{1cm}&&\|P\|_{s,\tau,N}^{(2)}=\|P_N\|_{L^2(\mu_{s,\tau}^{\GL_N(\C)})}.
	\end{alignat}
	In addition, for $\beta=1,2$, define the seminorms
	\begin{align}
	\|P\|_s^{(\beta)}&=\lim_{N\to\infty}\|P\|_{s,N}^{(\beta)} \label{eq:seminormlim1}\\
	\|P\|_{s,\tau}^{(\beta)}&=\lim_{N\to\infty}\|P\|_{s,\tau,N}^{(\beta)}\label{eq:seminormlim2}
	\end{align}
	In \cite[Lemma 4.7]{DHK:U(N)}, it was shown that for $\beta=2$, the seminorms \eqref{eq:seminormlim1} and \eqref{eq:seminormlim2} are norms when restricted to $\C[u,u^{-1}]$. Since
	\begin{align}
	\|P\|_s^{(1)}&=\|P\|^{(2)}_{s} \label{eq:seminorms}
	\end{align}
	it follows that the seminorm \eqref{eq:seminormlim1} is also a norm on $\C[u,u^{-1}]$ for $\beta=1$. An argument completely analogous to the one used in \cite[Lemma 4.7]{DHK:U(N)} to prove that $\|P\|_{s,\tau}^{(2)}$ is a norm on $\C[u,u^{-1}]$ shows that $\|P\|_{s,\tau}^{(1)}$ is a norm on $\C[u,u^{-1}]$. Hence, if $g_{s,\tau},g_{s,\tau}'\in\C[u,u^{-1}]$ both satisfy
	\begin{equation*}
	\|\BSONst f_N-[g_{s,\tau}]_N\|^2_{L^2(\mu_{s,\tau}^{\SO(N,\C)})}=O\left(\frac{1}{N^2}\right)=\|\BSONst f_N-[g_{s,\tau}']_N\|^2_{L^2(\mu_{s,\tau}^{\SO(N,\C)})},
	\end{equation*}
	the triangle inequality implies that
	\begin{equation}
	\|g_{s,\tau}-g_{s,\tau}'\|_{L^2(\mu_{s,\tau}^{\SO(N,\C)})}=O\left(\frac{1}{N^2}\right).
	\end{equation}
	Taking $N\to\infty$ shows that $\|g_{s,\tau}-g_{s,\tau}'\|_{s,\tau}^{(1)}=0$. Since $\|\cdot\|_{s,\tau}^{(1)}$ is a norm on $\C[u,u^{-1}]$, it follows that $g_{s,\tau}=g_{s,\tau}'$. A similar argument shows that $\Hst f$ is the unique polynomial satisfying \eqref{eq:freeinverseSBlimit}. 
\end{proof} 

\begin{example} \label{ex:BSONcomputation}
	Set $P(u;\mathbf{v})=u^2\in\C[u,u^{-1}]$. Then we can show that
	\begin{align*}
	\mathbf{B}_{s,\tau}^{\SO(N,\R)}P_N(A)&=e^{\frac{\tau}{2}\Delta_{\SO(N,\R)}}P_N(A)\\
	&=
	\frac{1}{N}(1-e^{-\tau})I_N+\frac{1}{2}e^{-\tau}(1+e^{\frac{2\tau}{N}})A^2-\frac{N}{2}e^{-\tau}(-1+e^{\frac{2\tau}{N}})A\tr A,
	\end{align*}
	via the same method used in \cite[Example 3.5]{DHK:U(N)}. By the intertwining formula $e^{\frac{\tau}{2}\Delta_{\SO(N,\R)}}P_N=[e^{\frac{\tau}{2}\mathcal{D}_N^{(1)}}P]_N$, this corresponds to the trace polynomial 
	\begin{align*}
	(e^{\frac{\tau}{2}\mathcal{D}_N^{(1)}}P)(u;\mathbf{v})&=\frac{1}{N}(1-e^{-\tau})+\frac{1}{2}e^{-\tau}(1+e^{\frac{2\tau}{N}})u^2-\frac{N}{2}e^{-\tau}(-1+e^{\frac{2\tau}{N}})uv_1\\
	&=e^{-\tau}(u^2-\tau uv_1)+O\left(\frac{1}{N^2}\right).
	\end{align*}
	By Theorem \ref{thm:tracelimit}, we compute $\mathscr{G}_{s,\tau}P$ by evaluating the traces in $e^{\frac{\tau}{2}\mathcal{D}_N^{(1)}}P$ at the moments $\nu_{s-\tau}$ and taking the large-$N$ limit of the resulting polynomial. Since $\nu_1(s-\tau)=e^{-(s-\tau)/2}$, this yields
	\begin{equation*}
	\mathscr{G}_{s,\tau}P(u)=e^{-\tau}(u^2-\tau e^{-(s-\tau)/2}u).
	\end{equation*}
	In \cite[Example 3.5]{DHK:U(N)}, it was shown that for the unitary case (with $\tau=t>0$), 
	\begin{align*}
	(e^{\frac{t}{2}\mathcal{D}_N^{(2')}}P)(u;\mathbf{v})&=e^{-t}\cosh (t/N)u^2-Ne^{-t}\sinh (t/N)uv_1\\
	&=e^{-\tau}(u^2-\tau uv_1)+O\left(\frac{1}{N^2}\right),
	\end{align*}
	which also results in the same expression for $\mathscr{G}_{s,t}P$. 
\end{example}


\section{The Segal-Bargmann transform on $\Sp(N)$} \label{sec:SBtransformSpN}

In this section, we prove the $\Sp(N)$ version of Theorems \ref{thm:intertwine1}, \ref{thm:SBintertwine}, and \ref{thm:freeSBlimit}. which comprise our main results. We begin with a slight digression and show that $\Sp(N)$ can also be realized as a subset of $N\times N$ quaternion matrices. This allows us to easily identify a particular orthonormal basis of $\symp(N)\subseteq\M_{2N}(\C)$ which we use to compute a set of magic formulas used in the proofs of our intertwining formulas for $e^{\frac{\tau}{2}\Delta_{\Sp(N)}}$ and $e^{\frac{1}{2}\mathcal{A}_{s,\tau}^{\Sp(N,\C)}}$. These magic formulas share key similarities with the magic formulas for $\SO(N,\R)$. Consequently, many of the results for the $\Sp(N)$ case can proven using techniques similar to those used in the $\SO(N,\R)$ case, and so we do not always provide the full details. We conclude by showing that when $\Sp(N)$ is realized as a subset of $M_N(\HH)$, a version of the magic formulas and intertwining formulas for $\Delta_{\Sp(N)}$ also hold. 

\subsection{Two realizations of the compact symplectic group} \label{subsec:SpNrealizations}

The {\bf quaternion algebra} $\HH$ is the four-dimensional associative algebra over $\R$ spanned by $\idH$, $\ii$, $\jj$, and $\kk$ satisfying the relations $$\ii^2=\jj^2=\kk^2=-\idH$$ and 
\begin{alignat*}{2}
\ii\jj&=\kk,\hspace{1cm}\jj\ii&&=-\kk,\\
\jj\kk&=\ii,\hspace{1cm}\kk\jj&&=-\ii,\\
\kk\ii&=\jj,\hspace{1cm}\ii\kk&&=-\jj.
\end{alignat*}

We denote the quaternion conjugate of $q\in\HH$ by $q^*$. That is, if $q=a\idH+b\ii +c\jj+d\kk$, then $q^*=a\idH-b\ii -c\jj-d\kk$. If $A\in \M_N(\HH)$, we define the adjoint of $A$ to be the matrix $A^*$ defined by $(A^*)_{i,j}=(A_{j,i})^*$. 
For each $N\geq 1$, we define $\wSpN$ as
\begin{equation}
\wSpN=\{A\in M_N(\HH)\, :\, A^*A=I_N\}. \label{def:wSpN}
\end{equation}

Its Lie algebra is
\begin{equation}
\wsympN=\{X\in\M_N(\HH)\, :\, X^*=-X\},\label{eq:inproSpNquat}
\end{equation}
which we endow with the real inner product
\begin{equation} \label{eq:innerprodwSpN}
\inpro{X,Y}_{\wsympN}=2N\Real\Tr (X^*Y)\hspace{.5cm}\mbox{for all }X,Y\in\wsympN.
\end{equation}
Let $$C_N=\left\{\frac{1}{\sqrt{4N}}(E_{a,b}+E_{b,a})\right\}_{1\leq a<b\leq N}\cup\left\{\frac{1}{\sqrt{2N}} E_{a,a}\right\}_{1\leq a\leq N},$$
where $E_{a,b}\subseteq M_N(\HH)$ is the $N\times N$ matrix with a $1$ in the $(a,b)$ entry and zeros elsewhere. An orthonormal basis for $\wsympN$ with respect to this inner product is
\begin{equation} \label{eq:ONBspN}
\beta_{\wsympN}=\left\{\frac{1}{\sqrt{4N}}(E_{a,b}-E_{b,a})\right\}_{1\leq a<b\leq N}\cup \ii C_N\cup \jj C_N\cup\kk C_N.
\end{equation}

From \eqref{def:wSpN}, we see that $\wSpN$ is the unitary group over the quaternions. We now explicitly construct a (real) Lie group isomorphism between $\wSpN$ and $\Sp(N)$. While $\wSpN$ is, in some sense, the more natural realization of the compact symplectic group, for our purposes it is easier to work with the definition given in \eqref{def:SpN}. In particular, the complexification of the compact symplectic group is more readily understood when $\Sp(N)$ is realized as a subset of $\M_{2N}(\C)$ rather than of $\M_N(\HH)$. 

The material presented below is standard (cf. \cite{Stillwell:naive}); we include it here for convenience. 

First, observe that we can realize $\HH$ as a subalgebra of $M_2(\C)$. We do so by letting $\psi:\HH\to\M_2(\C)$ denote the injective homomorphism 
\begin{equation} \label{eq:quaternion.matrix.abcd}
\HH\ni q=a\idH +b\ii + c\jj +d\kk\hspace{.3cm}\mapsto\hspace{.3cm}\left[
\begin{array}{rr}
a+di & -b-ci \\
b-ci & a-di \\
\end{array}
\right]\in\M_{2}(\C).
\end{equation}
The map $\psi$ preserves conjugation: $\psi(q^*)=\psi(q)^*$ for all $q\in\HH$, where we take the quaternion conjugate on the left and the complex conjugate transpose on the right. Moreover, note that we can write a matrix of the form (\ref{eq:quaternion.matrix.abcd}) as
\begin{equation} \label{eq:quaternion.matrix.alpha.beta}
\left[
\begin{array}{rr}
\alpha & -\beta \\
\overline{\beta} & \overline{\alpha} \\
\end{array}
\right]
\end{equation}
for some $\alpha,\beta\in\C$, and any matrix of this form corresponds to a $q\in\HH$. 

Now define a map $\Phi:\M_N(\HH)\to\M_{2N}(\C)$ by $$\Phi([q_{i,j}]_{1\leq i,j\leq N})=[\psi(q_{i,j})]_{1\leq i,j\leq N}.$$ By the properties of block multiplication, $\Phi$ is an algebra homomorphism. Since $\psi$ is one-to-one, it follows that $\Phi$ is also one-to-one. Moreover, since $\psi$ preserves conjugation, so does $\Phi$: if $A\in\M_N(\HH)$, then $\Phi(A^*)=\Phi(A)^*$, where again we take the quaternion conjugate transpose on the left and the complex conjugate transpose on the right. 

Consequently, if $A\in\wSpN\subseteq\M_N(\HH)$, so that $A^*A=I_N$, then $$\Phi(A)^*\Phi(A)=\Phi(A^*)\Phi(A)=\Phi(A^*A)=I_{2N}.$$ Hence
\begin{equation}
\Phi(\wSpN)=\{Z\in M_{2N}(\C)\, :\, \mbox{$Z^*Z=I_{2N}$ and $\exists$ $A\in\M_N(\HH)$ such that $\Phi(A)=Z$}\}. \label{eq:PhiSpNimage}
\end{equation}
We would like to show that $\Phi(\wSpN)=\Sp(N)$, which would imply that $\Sp(N)\cong\wSpN$ as groups. Recall that since $A\in\Sp(N)$ is unitary and $\Omega^{-1}=-\Omega$, we can rewrite the definition of $\Sp(N)$ as follows:
\begin{equation}
\Sp(N)=\{Z\in M_{2N}(\C)\, :\, Z^*Z=I_{2N},-\Omega Z\Omega=\overline{Z}\}. \label{eq:SpNdefinition}
\end{equation}

Comparing \eqref{eq:PhiSpNimage} and \ref{eq:SpNdefinition}, we see that it suffices to show that
\begin{equation}
Z=\Phi(A)\mbox{ for some }A\in\M_N(\HH)\hspace{.3cm}\Leftrightarrow \hspace{.3cm}-\Omega Z\Omega=\overline{Z}.
\end{equation}
We need the following lemma. 
\begin{lemma} \label{lem:complexquaternion}
	Let $B\in M_2(\C)$. Then $-\Omega_0B\Omega_0=\overline{B}$ if and only if $B=\left[
	\begin{array}{rr}
	\alpha & -\beta \\
	\overline{\beta} & \overline{\alpha} \\
	\end{array}
	\right]$ for some $\alpha,\beta\in\C$. 
\end{lemma}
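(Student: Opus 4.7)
The plan is to carry out a direct matrix computation. I would write a general $B = \begin{bmatrix} a & b \\ c & d \end{bmatrix}$ with entries in $\C$, and explicitly compute the product $-\Omega_0 B \Omega_0$. Using $\Omega_0 = \begin{bmatrix} 0 & 1 \\ -1 & 0 \end{bmatrix}$, one checks quickly that left-multiplication by $\Omega_0$ swaps the rows with a sign, and right-multiplication by $\Omega_0$ swaps the columns with a sign, giving
\[
-\Omega_0 B \Omega_0 = \begin{bmatrix} d & -c \\ -b & a \end{bmatrix}.
\]

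Next I would set this equal to $\overline{B} = \begin{bmatrix} \bar a & \bar b \\ \bar c & \bar d \end{bmatrix}$ entrywise. This yields the four scalar equations $d = \bar a$, $-c = \bar b$, $-b = \bar c$, $a = \bar d$. The first and fourth are conjugates of each other, and so are the second and third, so the system is equivalent to the two conditions $d = \bar a$ and $c = -\bar b$. Setting $\alpha := a$ and $\beta := -b$, these conditions say exactly that $B$ has the form $\begin{bmatrix} \alpha & -\beta \\ \bar\beta & \bar\alpha \end{bmatrix}$, which gives the forward implication.

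For the reverse implication, I would simply plug $B = \begin{bmatrix} \alpha & -\beta \\ \bar\beta & \bar\alpha \end{bmatrix}$ into the same computation and verify that $-\Omega_0 B \Omega_0 = \overline{B}$ holds. Since both directions are handled by the same explicit calculation, there is no substantive obstacle here; the only thing to be careful about is bookkeeping the signs from $\Omega_0$ correctly on both sides.
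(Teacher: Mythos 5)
Your proof is correct, and it takes essentially the same approach as the paper, which dismisses the lemma with the single phrase ``Direct computation.'' Your explicit matrix multiplication and the reduction of the four entrywise equations to the two conditions $d=\bar a$, $c=-\bar b$ is exactly the intended argument.
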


\begin{proof}
	Direct computation. 
\end{proof}

Consequently, for $Z\in M_{2N}(\C)$,
\begin{equation}
[-\Omega Z\Omega]^{i,j}=\sum_{1\leq k,\ell\leq N}[\Omega]^{i,k}[Z]_{k,\ell}[\Omega]^{\ell,j}=-\Omega_0[Z]^{i,j}\Omega_0,
\end{equation}
where $[Z]^{i,j}$ denotes the $(i,j)$th $2\times 2$ block of $Z$. By Lemma \ref{lem:complexquaternion}, we see that for each $1\leq i,j\leq N$, there exists $q_{i,j}\in\HH$ such that  $\psi(q_{i,j})=[Z]^{i,j}$ precisely when $-\Omega Z\Omega=\overline{Z}$. 

\begin{theorem}
	The map $\Phi|_{\wSpN}:\wSpN\to \Sp(N)$ is a Lie group isomorphism. 
\end{theorem}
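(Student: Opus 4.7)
The proposal is to assemble four ingredients that have essentially been set up already in the preceding discussion: (i) $\Phi$ is a group homomorphism, (ii) $\Phi|_{\wSpN}$ lands inside $\Sp(N)$ and is surjective onto it, (iii) it is injective, and (iv) both $\Phi|_{\wSpN}$ and its inverse are smooth.

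First, because $\Phi:\M_N(\HH)\to\M_{2N}(\C)$ is an $\R$-algebra homomorphism sending $I_N$ to $I_{2N}$, it sends invertible elements to invertible elements and respects multiplication; restricting to $\wSpN$ therefore gives a group homomorphism into $\GL_{2N}(\C)$. I would then identify the image with $\Sp(N)$ by combining the characterization \eqref{eq:PhiSpNimage}, the rewriting \eqref{eq:SpNdefinition} of $\Sp(N)$, and Lemma~\ref{lem:complexquaternion} applied blockwise: for $Z\in\M_{2N}(\C)$ with $(i,j)$-th $2\times2$ block $[Z]^{i,j}$, the identity $[-\Omega Z\Omega]^{i,j}=-\Omega_0[Z]^{i,j}\Omega_0$ shows that $-\Omega Z\Omega=\overline{Z}$ if and only if every block $[Z]^{i,j}$ has the form of equation~\eqref{eq:quaternion.matrix.alpha.beta}, which by Lemma~\ref{lem:complexquaternion} is equivalent to $Z$ lying in the image of $\Phi$. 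Together with the unitarity condition $Z^*Z=I_{2N}$, this gives $\Phi(\wSpN)=\Sp(N)$.

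Injectivity of $\Phi|_{\wSpN}$ is inherited from injectivity of $\Phi$ on all of $\M_N(\HH)$, which itself is immediate from the injectivity of $\psi:\HH\to\M_2(\C)$ and the blockwise definition of $\Phi$. So $\Phi|_{\wSpN}:\wSpN\to\Sp(N)$ is already an abstract group isomorphism; only smoothness remains.

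For smoothness, the map $\Phi$ is $\R$-linear between finite-dimensional real vector spaces (indeed, each entry of $\Phi(A)$ is an $\R$-linear functional of the real coordinates $a,b,c,d$ of the quaternion entries of $A$), hence smooth; its restriction to the smooth submanifold $\wSpN$ is therefore smooth. For the inverse, I would write down an explicit $\R$-linear left inverse on all of $\M_{2N}(\C)$: given $Z=[Z^{i,j}]$, define $\Psi(Z)\in\M_N(\HH)$ by reading off $q_{i,j}=a_{ij}\idH+b_{ij}\ii+c_{ij}\jj+d_{ij}\kk$ from the top-left block of $[Z]^{i,j}=\bigl[\begin{smallmatrix}\alpha&-\beta\\\overline{\beta}&\overline{\alpha}\end{smallmatrix}\bigr]$ via $a_{ij}=\Real\alpha$, $d_{ij}=\Im\alpha$, $b_{ij}=\Real\beta$, $c_{ij}=-\Im\beta$. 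This $\Psi$ is $\R$-linear, hence smooth, and by construction $\Psi\circ\Phi=\mathrm{id}$ on $\M_N(\HH)$. Restricting $\Psi$ to $\Sp(N)$ produces a smooth two-sided inverse of $\Phi|_{\wSpN}$, completing the proof.

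The only place where one has to be careful is the blockwise application of Lemma~\ref{lem:complexquaternion} in step~(ii); but since the permutation $\Omega$ is block-diagonal with blocks $\Omega_0$, the conjugation $-\Omega Z\Omega$ acts independently on each $2\times 2$ block, so no genuine obstacle arises.
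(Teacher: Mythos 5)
Your argument is correct and matches the paper's proof, which simply cites ``the preceding discussion'' for the abstract group isomorphism and declares smoothness ``clear from the definition of $\Phi$''; you have usefully fleshed out both steps, in particular by exhibiting an explicit $\R$-linear left inverse $\Psi$ defined on all of $\M_{2N}(\C)$. One small slip in the coordinate formula for $\Psi$: reading off from \eqref{eq:quaternion.matrix.abcd}, $\alpha=a+di$ and $\beta=b+ci$, so $c_{ij}=\Im\beta$ rather than $-\Im\beta$.
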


\begin{proof}
	The preceding discussion shows that $\Phi|_{\wSpN}$ is a group isomorphism. It remains to be shown that $\Phi|_{\wSpN}$ and $\Phi|_{\wSpN}^{-1}$ are smooth, but this is clear from the definition of $\Phi$. 
\end{proof}

The inner products \eqref{eq:inproSpNcomplex} and \eqref{eq:innerprodwSpN} for $\symp(N)$ and $\wsympN$, resp., are related by $\Phi$. For $X\in\M_N(\HH)$, note that
\begin{equation}
\Real\Tr X=\frac{1}{2}\Real\Tr\Phi(X)=\frac{1}{2}\Tr\Phi(X).
\end{equation}
Hence for $X,Y\in\wsympN$, 
\begin{align}
\inpro{X,Y}_{\wsympN}&=2N\Real\Tr(XY^*)=N\Tr(\Phi(XY^*))\nonumber\\
&=N\Tr(\Phi(X)\Phi(Y)^*)=\inpro{\Phi(X),\Phi(Y)}_{\symp(N)}.\label{eq:PhiTrintertwine}
\end{align}
In particular, if $\beta_{\wsympN}$ is an orthonormal basis for $\wsympN\subseteq\M_N(\HH)$ with respect to \eqref{eq:inproSpNquat}, then $\Phi(\beta_{\wsympN})$ is an orthonormal basis for $\symp(N)\subseteq\M_{2N}(\C)$ with respect to \eqref{eq:inproSpNcomplex}.

\subsection{Magic formulas and derivative formulas}

\begin{prop}[Magic formulas for $\Sp(N)$] \label{prop:magicSpNv2}
	Let $\beta_{\symp(N)}$ be any orthonormal basis for $\symp(N)$ with respect to the inner product \eqref{eq:inproSpNcomplex}. For any $A,B\in\M_{2N}(\C)$, 
	\begin{align}
	\sum_{X\in\beta_{\symp(N)}} X^2&=-\frac{2N+1}{2N}I_{2N}=-I_{2N}-\frac{1}{2N}I_{2N} \label{eq:magicSpN1v2}\\
	\sum_{X\in\beta_{\symp(N)}} XAX&=-\frac{1}{2N}\Omega A^\intercal\Omega^{-1}-\widetilde{\tr}(A)I_{2N} \label{eq:magicSpN2v2}\\
	\sum_{X\in\beta_{\symp(N)}} \widetilde{\tr}(XA)X&=\frac{1}{4N^2}(\Omega A^\intercal \Omega^{-1}-A)\label{eq:magicSpN3v2}\\
	\sum_{X\in\beta_{\symp(N)}}\widetilde{\tr}(XA)\widetilde{\tr}(XB)&=\frac{1}{4N^2}(\widetilde{\tr}(\Omega A^\intercal\Omega B)-\widetilde{\tr}(AB))\label{eq:magicSpN4v2}
	\end{align}
\end{prop}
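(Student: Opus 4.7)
The plan is to reduce to a single convenient orthonormal basis and then compute directly, mirroring the strategy of Proposition \ref{prop:magic}. Exactly as in \cite[Theorem 3.3]{DHK:U(N)}, the $\Ad$-invariance of $\inpro{\cdot,\cdot}_{\symp(N)}$ implies that each of the four sums \eqref{eq:magicSpN1v2}--\eqref{eq:magicSpN4v2} is independent of the orthonormal basis chosen, so it suffices to verify them for one explicit choice. Moreover, \eqref{eq:magicSpN1v2} is the specialization of \eqref{eq:magicSpN2v2} at $A=I_{2N}$ (using $\Omega\Omega^{-1}=I_{2N}$ and $\wtr(I_{2N})=1$), and \eqref{eq:magicSpN4v2} follows from \eqref{eq:magicSpN3v2} by right-multiplying by $B$ and applying $\wtr$, with the $\Omega^{-1}$ versus $\Omega$ factors matching up through the identity $\Omega^{-1}=-\Omega$. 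Thus only \eqref{eq:magicSpN2v2} and \eqref{eq:magicSpN3v2} require genuine work.

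For the explicit basis I would take $\Phi(\beta_{\wsympN})$, where $\beta_{\wsympN}$ is the basis of $\wsympN$ displayed in \eqref{eq:ONBspN} and $\Phi$ is the isomorphism built from the quaternion-matrix embedding $\psi$ of \eqref{eq:quaternion.matrix.abcd}; by \eqref{eq:PhiTrintertwine} this is an orthonormal basis of $\symp(N)\subseteq\M_{2N}(\C)$. It naturally splits into four families indexed by $\gamma\in\{\idH,\ii,\jj,\kk\}$ and, within each family, into diagonal pieces $\psi(\gamma)E_{a,a}$ and off-diagonal pieces $\psi(\gamma)(E_{a,b}\pm E_{b,a})$, all realized as $2\times 2$ block matrices. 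The computation of \eqref{eq:magicSpN2v2} then proceeds by writing $A$ in $2\times 2$ blocks $A^{i,j}$, expanding $\sum_X XAX$ block-by-block using the elementary identities of Lemma \ref{lem:elementary}, and identifying the resulting expressions. The appearance of $\Omega A^\intercal\Omega^{-1}$ in the final answer is the key structural feature: it emerges because, for any fixed $2\times 2$ matrix $B$, the quaternionic sum $\sum_{\gamma\in\{\idH,\ii,\jj,\kk\}}\psi(\gamma)B\psi(\gamma)$ collapses to a multiple of $\Omega_0 B^\intercal \Omega_0^{-1}$, a Clifford-style identity reflecting the underlying quaternionic structure. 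Assembling these blockwise collapses across all block positions reconstructs the global factor $\Omega A^\intercal\Omega^{-1}$, while the diagonal contributions combine to produce the $\wtr(A)I_{2N}$ term. The derivation of \eqref{eq:magicSpN3v2} is essentially parallel: summing $\wtr(XA)X$ over the same four families yields the characteristic $\Omega A^\intercal\Omega^{-1}-A$ combination.

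The main obstacle is purely combinatorial bookkeeping: four families of basis elements, each contributing $2\times 2$ blocks to a generic $2N\times 2N$ matrix, must be tracked with enough care to extract the $\Omega$'s cleanly. A cleaner (if less self-contained) alternative, noted in the Remark following Proposition \ref{prop:magic}, is to invoke L\'evy's Casimir decomposition \cite[Lemma 1.2.1]{Levy:master}: that decomposition expresses $\sum_{X\in\beta_{\symp(N)}}X\otimes X$ in terms of the tensors $T$, $P$, $\Real^{\HH}$, and $\Co^{\HH}$, and feeding any $\R$-bilinear form through it yields all four identities uniformly. I would present the direct basis computation as the primary proof and mention the Casimir approach as a conceptually cleaner alternative.
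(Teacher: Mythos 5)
Your proposal matches the paper's proof essentially verbatim: you use $\Ad$-invariance to reduce to the explicit basis $\Phi(\beta_{\wsympN})$, observe that \eqref{eq:magicSpN1v2} and \eqref{eq:magicSpN4v2} are corollaries of \eqref{eq:magicSpN2v2} and \eqref{eq:magicSpN3v2} respectively, and then verify the latter two by a $2\times 2$ block-by-block computation in which the four $\psi(\gamma)$'s contract to produce the $\Omega(\cdot)^\intercal\Omega^{-1}$ structure, exactly as in the paper (which also mentions the L\'evy Casimir decomposition only as a remark, not as the actual argument). The one place to be slightly careful is your claim that the $\Omega$ versus $\Omega^{-1}$ in \eqref{eq:magicSpN4v2} versus \eqref{eq:magicSpN3v2} reconcile via $\Omega^{-1}=-\Omega$: that identity introduces a sign rather than removing the discrepancy, so multiplying \eqref{eq:magicSpN3v2} by $B$ and taking $\wtr$ literally yields $\wtr(\Omega A^\intercal\Omega^{-1}B)$, which is the form that is actually used in the rest of the paper.
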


\begin{proof}
	As with the magic formulas for $\SO(N,\R)$, the quantities on the left hand side of (\ref{eq:magicSpN1v2}), (\ref{eq:magicSpN2v2}), (\ref{eq:magicSpN3v2}), and (\ref{eq:magicSpN4v2}) are independent of choice of orthonormal basis. Thus we may compute with respect to the orthonormal basis $\beta_{\symp(N)}:=\Phi(\beta_{\wsympN})$, where $\beta_{\wsympN}$ is the orthonormal basis described in \eqref{eq:ONBspN}. 
	
	We will use the following notation: for $A\in\M_{2N}(\C)$, let $[A]^{i,j}$ denote the $(i,j)$th $2\times 2$ submatrix, and let $A^{i,j}_{k,\ell}$ denote the $(k,\ell)$th entry of $[A]^{i,j}$. If $E_{a,b}\subseteq\M_N(\HH)$ is an elementary matrix and $\gamma\in\{\idH,\ii, \jj,\kk\}\subseteq\HH$, 
	\begin{equation}
	F_{a,b}^\gamma:=\Phi(\gamma E_{a,b})\in M_{2N}(\C).
	\end{equation}
	Observe that $[\Fabg]^{i,j}$ contains all zeros unless $(i,j)=(a,b)$, in which case it is equal to one of $$\psi(\idH)=\left[
	\begin{array}{rr}
	1 & 0 \\
	0 & 1 \\
	\end{array}
	\right], \hspace{.15cm}\psi(\ii)=\left[
	\begin{array}{rr}
	0 & -1 \\
	1 & 0 \\
	\end{array}
	\right],\hspace{.15cm}\psi(\jj)=\left[
	\begin{array}{rr}
	0 & -i \\
	-i & 0 \\
	\end{array}
	\right],\hspace{.15cm}\psi(\kk)=\left[
	\begin{array}{rr}
	i & 0 \\
	0 & -i \\
	\end{array}
	\right],$$ depending on whether $\gamma$ equals $\idH, \ii,\jj$, or $\kk$, respectively.
	
	Equation (\ref{eq:magicSpN1v2}) follows from (\ref{eq:magicSpN2v2}). For \eqref{eq:magicSpN2v2}, we have
	\begin{align} 
	\sum_{X\in\beta_{\symp(N)}}XAX&=\frac{1}{4N}\sum_{1\leq a,b\leq N}\sum_{\gamma\in\{\idH,\ii,\jj,\kk\}} \Fabg A\Fabg\nonumber\\
	&\hspace{.5cm}-\frac{1}{4N}\sum_{1\leq a,b\leq N}\left(\FabidH A\FbaidH -\sum_{\gamma\in\{\ii,\jj,\kk\}}\Fabg A\Fbag\right) \label{eq:XAXSpN1}
	\end{align}
	For $1\leq a,b,c,d,i,j\leq N$, we have
	\begin{align}
	[\Fabg A F_{c,d}^\gamma]^{i,j}&=\sum_{1\leq k,\ell\leq N}[\Fabg]^{i,k}[A]^{k,\ell}[F_{c,d}^\gamma]^{\ell,j}=[\Fabg]^{i,b}[A]^{b,c}[F_{c,d}^\gamma]^{c,j}\\
	&=\begin{cases}
	0_2&\mbox{$i\neq a$ or $j\neq d$}\\
	\psi(\gamma)[A]^{b,c}\psi(\gamma)&\mbox{$i=a$ and $j=d$}
	\end{cases}
	\end{align}
	
	A straightforward computation shows that
	\begin{align}
	&\psi(\idH)[A]^{b,c}\psi(\idH)=\left[
	\begin{array}{rr}
	A^{b,c}_{1,1} & A^{b,c}_{1,2} \\
	A^{b,c}_{2,1} & A^{b,c}_{2,2} \\
	\end{array}
	\right],\hspace{.3cm}\psi(\ii)[A]^{b,c}\psi(\ii)=\left[
	\begin{array}{rr}
	-A^{b,c}_{2,2} & A^{b,c}_{2,1} \\
	A^{b,c}_{1,2} & -A^{b,c}_{1,1} \\
	\end{array}
	\right]\\
	&\psi(\jj)[A]^{b,c}\psi(\jj)=\left[
	\begin{array}{rr}
	-A^{b,c}_{2,2} & -A^{b,c}_{2,1} \\
	-A^{b,c}_{1,2} & -A^{b,c}_{1,1} \\
	\end{array}
	\right],\hspace{.3cm}\psi(\kk)[A]^{b,c}\psi(\kk)\left[
	\begin{array}{rr}
	-A^{b,c}_{1,1} & A^{b,c}_{1,2} \\
	A^{b,c}_{2,1} & -A^{b,c}_{2,2} \\
	\end{array}
	\right]
	\end{align}
	Hence for the first term on the right hand side of \eqref{eq:XAXSpN1}, 
	\begin{align}
	\left[\frac{1}{4N}\sum_{1\leq a,b\leq N}\sum_{\gamma\in\{\idH,\ii,\jj,\kk\}} \Fabg A\Fabg\right]^{i,j}&=\frac{1}{4N}\left[\sum_{\gamma\in\{\idH,\ii,\jj,\kk\}}F_{i,j}^\gamma AF_{i,j}^\gamma\right]^{i,j}\nonumber\\
	&=\frac{1}{2N}\left[
	\begin{array}{rr}
	-A^{j,i}_{2,2} & A^{j,i}_{1,2} \\
	A^{j,i}_{2,1} & -A^{j,i}_{1,1} \\
	\end{array}
	\right].
	\end{align}
	For the second term on the right hand side of \eqref{eq:XAXSpN1}, the $(i,j)$th block is $0_2$ for $i\neq j$. For $i=j$, 
	\begin{align}
	&\left[\frac{1}{4N}\sum_{1\leq a,b\leq N}\left(\FabidH A\FbaidH -\sum_{\gamma\in\{\ii,\jj,\kk\}}\Fabg A\Fbag\right)\right]^{i,j}\nonumber\\
	&\hspace{1cm}=
	\frac{1}{4N}\left[\sum_{b=1}^N\left(F_{i,b}^{\idH}AF_{b,i}^{\idH}-\sum_{\gamma\in\{\ii,\jj,\kk\}}F_{i,b}^\gamma AF_{b,i}^\gamma\right)\right]^{i,j}\\
	&\hspace{1cm}=\frac{1}{4N}\sum_{b=1}^N\left([A]^{b,b}-\sum_{\gamma\in\{\ii,\jj,\kk\}}\psi(\gamma)[A]^{b,b}\psi(\gamma)\right)\\
	&\hspace{1cm}=\frac{1}{2N}\sum_{b=1}^N\left[
	\begin{array}{rr}
	A^{b,b}_{1,1} + A^{b,b}_{2,2} & 0 \\
	0 & A^{b,b}_{1,1} + A^{b,b}_{2,2} \\
	\end{array}
	\right]\\
	&\hspace{1cm}=\frac{1}{2N}\Tr(A)I_2.
	\end{align}
	Putting this together, we have
	\begin{align}
	\left[\sum_{X\in\beta_{\symp(N)}}XAX\right]^{i,j}&=
	\frac{1}{2N}
	\begin{cases}
	\left[
	\begin{array}{rr}
	-A^{j,i}_{2,2} & A^{j,i}_{1,2} \\
	A^{j,i}_{2,1} & -A^{j,i}_{1,1} \\
	\end{array}
	\right]&i\neq j\\[12pt]
	\left[
	\begin{array}{rr}
	-A^{j,i}_{2,2} & A^{j,i}_{1,2} \\
	A^{j,i}_{2,1} & -A^{j,i}_{1,1} \\
	\end{array}
	\right]-\Tr(A)I_2&i=j.
	\end{cases} \label{eq:XAXSpN2}
	\end{align}
	On the other hand, we can compute
	\begin{align}
	\left[-\frac{1}{2N}\Omega A^\intercal\Omega^{-1} -\wtr (A)I_{2N}\right]^{i,j}&=-\frac{1}{2N}\begin{cases}
	\Omega_0 ([A]^{j,i})^\intercal\Omega_0^{-1}&i\neq j,\\
	\Omega_0 ([A]^{j,i})^\intercal\Omega_0^{-1}-\Tr(A)I_2&i=j.
	\end{cases}\label{eq:XAXSpN3}
	\end{align}
	Multiplying out \eqref{eq:XAXSpN3} and comparing with \eqref{eq:XAXSpN2} proves \eqref{eq:magicSpN2v2}. 
	
	Next, for \eqref{eq:magicSpN3v2}, we again use the basis $\beta_{\symp(N)}$ to compute
	\begin{align} 
	\sum_{X\in\beta_{\symp(N)}}\wtr(XA)X&=\frac{1}{4N}\sum_{1\leq a,b\leq N}\sum_{\gamma\in\{\idH,\ii,\jj,\kk\}} \wtr(\Fabg A)\Fabg \label{eq:magic3SpNv2calc1}\\
	&\hspace{.5cm}-\frac{1}{4N}\sum_{1\leq a,b\leq N}\left(\wtr(\FabidH A)\FbaidH -\sum_{\gamma\in\{\ii,\jj,\kk\}}\wtr(\Fabg A)\Fbag\right). \label{eq:magic3SpNv2calc2}
	\end{align} 
	For $1\leq a,b\leq N$, a simple calculation shows that 
	\begin{align}
	&\wtr(\FabidH A)=\frac{1}{2N}(A_{1,1}^{b,a}+A_{2,2}^{b,a}),\hspace{.5cm}\wtr(\Fab^{\ii}A)=\frac{1}{2N}(A_{1,2}^{b,a}-A_{2,1}^{b,a})\\
	&\wtr (\Fab^{\jj}A)=-\frac{i}{2N}(A_{1,2}^{b,a}+A_{2,1}^{b,a}),\hspace{.5cm}\wtr(\Fab^{\kk}A)=\frac{i}{2N}(A_{1,1}^{b,a}-A_{2,2}^{b,a}),
	\end{align}
	The $(i,j)$th block of the right hand side of \eqref{eq:magic3SpNv2calc1} is thus
	\begin{align}
	\left[\frac{1}{4N}\sum_{1\leq a,b\leq N}\sum_{\gamma\in\{\idH,\ii,\jj,\kk\}} \wtr(\Fabg A)\Fabg\right]^{i,j}&=\frac{1}{4N}\left[\wtr (F_{i,j}^\idH A)F_{i,j}^\idH +\sum_{\gamma\in\{\ii,\jj,\kk\}}\wtr (F_{i,j}^\gamma A)F_{i,j}^\gamma\right]^{i,j}\nonumber\\
	&=\frac{1}{4N^2}\left[
	\begin{array}{rr}
	A_{2,2}^{j,i} & -A^{j,i}_{1,2} \\
	-A^{j,i}_{2,1} & A^{j,i}_{1,1} \\
	\end{array}
	\right].
	\end{align}
	
	Similarly, the $(i,j)$th block of \eqref{eq:magic3SpNv2calc2} is
	\begin{align}
	&\left[\frac{1}{4N}\sum_{1\leq a,b\leq N}\left(\wtr(\FabidH A)\FbaidH -\sum_{\gamma\in\{\ii,\jj,\kk\}}\wtr(\Fabg A)\Fbag\right)\right]^{i,j}\nonumber\\
	&\hspace{1cm}=\frac{1}{4N}\left[\wtr (F_{j,i}^\idH A)F_{i,j}^\idH - \sum_{\gamma\in\{\ii,\jj,\kk\}}\wtr(F_{j,i}^\gamma A)F_{i,j}^\gamma\right]^{i,j}\nonumber\\
	&\hspace{1cm}=\frac{1}{4N^2}\left[
	\begin{array}{rr}
	A_{1,1}^{i,j} & A^{i,j}_{1,2} \\
	A^{i,j}_{2,1} & A^{i,j}_{2,2} \\
	\end{array}
	\right].
	\end{align}
	Combining our computations, 
	\begin{equation}
	\left[\sum_{X\in\beta_{\symp(N)}}\wtr (XA)X\right]^{i,j}=\frac{1}{4N^2}\left[
	\begin{array}{rr}
	A_{2,2}^{j,i}-A_{1,1}^{i,j} & -A^{j,i}_{1,2}-A_{1,2}^{i,j} \\
	-A^{j,i}_{2,1}-A_{i,j}^{i,j} & A^{j,i}_{1,1} -A_{2,2}^{i,j}\\
	\end{array}
	\right] \label{eq:trXAXSpNcalc}
	\end{equation}
	Again, we can expand
	\begin{align}
	\left[\frac{1}{4N^2}(\Omega A^\intercal\Omega^{-1}-A)\right]^{i,j}=\frac{1}{4N^2}[-\Omega_0([A]^{j,i})^\intercal \Omega_0^{-1}-A]^{i,j}
	\end{align}
	to see that this is equivalent to \eqref{eq:trXAXSpNcalc}. This proves \eqref{eq:magicSpN3v2}; multiplying on the right by $B\in M_{2N}(\C)$ and taking $\wtr$ proves \eqref{eq:magicSpN4v2}. 
\end{proof}

While Proposition \ref{prop:magicSpNv2} holds for all $A,B\in\M_{2N}(\C)$, we now restrict to the case in which $A$ and $B$ are in $\Sp(N)$ or $\Sp(N,\C)$. Note that for $A\in\Sp(N,\C)$, $\Omega A^\intercal\Omega^{-1}=A^{-1}$. 

The magic formulas for $\Sp(N)$ allow us to compute the following derivative formulas, as in the $\SO(N,\R)$ case. 

\begin{prop}[Derivative formulas for $\Sp(N)$] \label{prop:SpNderivatives}
	The following identities hold on $\Sp(N)$ and $\Sp(N,\C)$:
	\begin{gather}
	\LSpN A^m=\Ind_{m\geq 2}\left[-\frac{1}{N}\sum_{j=1}^{m-1}(m-j)A^{m-2j}-2\sum_{j=1}^{m-1}(m-j)\wtr(A^j)A^{m-j}\right]\nonumber\\
	+\left(-1-\frac{1}{2N}\right)mA^m,\hspace{.5cm}m\geq 0\label{eq:DAmposSpN}\\
	\LSpN A^m=\Ind_{m\leq -2}\left[-\frac{1}{N}\sum_{j=m+1}^{-1}(-m+j)A^{m-2j}-2\sum_{j=m+1}^{-1}(-m+j)\wtr(A^{j})A^{m-j}\right]\nonumber\\
	-\left(-1-\frac{1}{2N}\right)mA^m,\hspace{.5cm}m<0\label{eq:DAmnegSpN}\\
	\sum_{X\in\bSpN}\wX \wtr (A^m)\cdot\wX A^p=\frac{mp}{4N^2}(A^{p-m}-A^{p+m}),\hspace{.5cm}m,p\in\Z. \label{eq:cross.sum.spN}
	\end{gather}
\end{prop}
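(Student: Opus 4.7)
The plan is to mirror the proof of the $\SO(N,\R)$ derivative formulas, substituting the symplectic magic formulas from Proposition \ref{prop:magicSpNv2} at every step. The key algebraic input is that for $A\in\Sp(N,\C)$ we have $\Omega A^\intercal\Omega^{-1}=A^{-1}$, so that magic formula \eqref{eq:magicSpN2v2} becomes
\begin{equation*}
\sum_{X\in\bSpN}XA^jX=-\frac{1}{2N}A^{-j}-\wtr(A^j)I_{2N},
\end{equation*}
and similarly \eqref{eq:magicSpN3v2} becomes $\sum_{X\in\bSpN}\wtr(XA^j)X=\frac{1}{4N^2}(A^{-j}-A^j)$. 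These play the exact roles that \eqref{eq:magic2} and \eqref{eq:magic3} played in the $\SO(N,\R)$ derivation.

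First I would observe that the three elementary identities \eqref{eq:XAmpos}, \eqref{eq:XAmneg}, and \eqref{eq:XtrAm} are proved using only the product rule for the left-invariant vector field $\wX$ and the invertibility of $A$; they therefore apply verbatim with $X\in\bSpN$ on $\Sp(N)$ and $\Sp(N,\C)$. In particular, for $m\geq 0$ we still have the expansion
\begin{equation*}
\wX^2A^m=2\Ind_{m\geq 2}\sum_{\substack{j,k\neq 0\\ j+k+\ell=m}}A^jXA^kXA^\ell+\sum_{j=1}^mA^jX^2A^{m-j},
\end{equation*}
and the analogous formula for $m<0$.

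Next, I would sum over $X\in\bSpN$. The pure $X^2$ part contracts via magic formula \eqref{eq:magicSpN1v2} to produce the diagonal term $(-1-\tfrac{1}{2N})mA^m$. The $XA^kX$ part contracts via \eqref{eq:magicSpN2v2} (and $\Omega (A^k)^\intercal\Omega^{-1}=A^{-k}$) to give, after collecting terms indexed by $j$ in exactly the way done in the $\SO(N,\R)$ computation,
\begin{equation*}
2\Ind_{m\geq 2}\left[-\frac{1}{2N}\sum_{j=1}^{m-1}(m-j)A^{m-2j}-\sum_{j=1}^{m-1}(m-j)\wtr(A^j)A^{m-j}\right].
\end{equation*}
Distributing the factor of $2$ into the first sum yields \eqref{eq:DAmposSpN}; the case $m<0$ is handled identically using the negative-index version of the expansion for $\wX^2A^m$, producing \eqref{eq:DAmnegSpN}.

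For the cross-sum \eqref{eq:cross.sum.spN}, I would repeat the four-case argument from the $\SO(N,\R)$ proof. Using \eqref{eq:XAmpos}/\eqref{eq:XAmneg} and \eqref{eq:XtrAm}, each term reduces to an expression of the form
\begin{equation*}
\pm\,(\text{sgn})\,m\sum_{k}A^k\Bigl(\sum_{X\in\bSpN}\wtr(XA^m)X\Bigr)A^{p-k},
\end{equation*}
and then \eqref{eq:magicSpN3v2} collapses the inner sum to $\tfrac{1}{4N^2}(A^{-m}-A^m)$. Tracking the sign conventions across the four sign combinations of $m$ and $p$ gives $\tfrac{mp}{4N^2}(A^{p-m}-A^{p+m})$ uniformly, which is \eqref{eq:cross.sum.spN}.

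The proof has essentially no obstacle beyond careful bookkeeping; the only real substitution relative to the $\SO(N,\R)$ proof is that the coefficient $\tfrac{1}{N}$ coming from $\sum XAX$ is replaced by $-\tfrac{1}{2N}$, and the diagonal term $-\tfrac{N-1}{N}$ from $\sum X^2$ is replaced by $-1-\tfrac{1}{2N}$. These are exactly the coefficients that appear in the statement of Proposition \ref{prop:SpNderivatives}, so I expect the main (minor) difficulty to be the systematic sign-tracking in the cross-sum step.
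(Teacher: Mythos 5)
Your proposal is correct and follows the same route as the paper's proof: expand $\wX^2 A^m$ exactly as in the $\SO(N,\R)$ case, contract with the $\Sp(N)$ magic formulas of Proposition \ref{prop:magicSpNv2} using $\Omega A^\intercal\Omega^{-1}=A^{-1}$ on $\Sp(N,\C)$, and repeat the four-case cross-sum argument with $\wtr$ and $\frac{1}{4N^2}$ in place of $\tr$ and $\frac{1}{N^2}$. The only slip is the phrase ``distributing the factor of $2$ into the first sum,'' since the $2$ must multiply both sums inside the bracket to produce the coefficients $-\frac{1}{N}$ and $-2$ appearing in \eqref{eq:DAmposSpN}.
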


\begin{proof}
	For $m\geq 0$, we use \eqref{eq:XXAm} and magic formulas \eqref{eq:magic1SpN} and \eqref{eq:magic2SpN} to sum over all $X\in\beta_{\Sp(N)}$:
	\begin{align}
	\Delta_{\Sp(N)}A^m&=2\Ind_{m\geq 2}\sum_{X\in\bSON}\sum_{\substack{k,j\neq 0\\k+j+\ell =m}}A^kXA^jXA^\ell +\sum_{X\in\bSON}\sum_{j=1}^mA^jX^2A^{m-j}\\
	&=2\Ind_{m\geq 2}\sum_{\substack{k,j\neq 0\\k+j+\ell =m}}\left[-\frac{1}{2N}A^k(A^j)^* A^\ell-\wtr(A^j)A^{k+\ell}\right]-\frac{m(2N+1)}{2N}A^m\\
	&=\Ind_{m\geq 2}\left[-\frac{1}{N}\sum_{j=1}^{m-1}(m-j)A^{m-2j}-2\sum_{j=1}^{m-1}(m-j)\wtr(A^j)A^{m-j}\right]\nonumber\\
	&\hspace{.5cm}+\left(-1-\frac{1}{2N}\right)mA^m.
	\end{align}
	
	For $m<0$, we use \eqref{eq:XAmneg} and magic formulas \eqref{eq:magic1SpN} and \eqref{eq:magic2SpN} to sum over all $X\in\beta_{\Sp(N)}$:
	\begin{align*}
	\Delta_{\Sp(N)}A^m&=2\Ind_{m\geq 2}\sum_{X\in\bSON}\sum_{\substack{k,\ell<0,j\leq 0\\j+k+\ell =m}}A^kXA^jXA^\ell +\sum_{X\in\bSON}\sum_{j=m+1}^0A^jX^2A^{m-j}\\
	&=2\Ind_{m\geq 2}\sum_{\substack{k,\ell<0,j\leq 0\\j+k+\ell =m}}\left[-\frac{1}{2N}A^k(A^j)^* A^\ell-\wtr(A^j)A^{k+\ell}\right]+\frac{m(2N+1)}{2N}A^m\\
	&=\Ind_{m\leq -2}\left[-\frac{1}{N}\sum_{j=m+1}^{-1}(-m+j)A^{m-2j}-2\sum_{j=m+1}^{-1}(-m+j)\wtr(A^{j})A^{m-j}\right]\\
	&\hspace{.5cm}-\left(-1-\frac{1}{2N}\right)mA^m.
	\end{align*} 
	
	Finally, comparing the magic formulas \eqref{eq:magic3} and \eqref{eq:magicSpN3v2}, we see that the proof of \eqref{eq:cross.sum.spN} is identical to that of \eqref{eq:XtrAmXAp}. 
\end{proof}

\subsection{Intertwining formulas for $\Delta_{\Sp(N)}$}

The derivative formulas allow us to prove the intertwining formula for $\Delta_{\Sp(N)}$. 

\begin{proof}[Proof of Theorem \ref{thm:intertwine1} for $\Sp(N)$] \label{pf:thm:intertwine1SpN}
	We proceed as in the proof of Theorem \ref{thm:intertwine1} for the $\SO(N,\R)$ case. Again, it suffices to show that $\Delta_{\Sp(N)}P_N=[\mathcal{D}_N^{(4)}P]_N$ holds for $P(u;\mathbf{v})=u^mq(\mathbf{v})$, where $m\in\Z\setminus\{0\}$ and $q\in\C[\mathbf{v}]$. Then $P_N=W_m\cdot q(\mathbf{V})$, and by the product rule, 
	\begin{align}
	\Delta_{\Sp(N)}P_N&=(\Delta_{\Sp(N)}W_m)\cdot q(\V)+2\sum_{X\in\beta_{\mathfrak{sp}(N)}}\wX W_m\cdot \wX q(\V)+W_m\cdot (\Delta_{\Sp(N)}q(\V)) \label{eq:LaplacianPNSpN}
	\end{align}
	
	For the first term in (\ref{eq:LaplacianPNSpN}), we consider the cases $m\geq 0$ and $m<0$ separately. For $m\geq 0$, we apply (\ref{eq:DAmposSpN}) to get
	\begin{align}
	(\Delta_{\Sp(N)} W_m)\cdot q(\V)&=-\frac{m(2N+1)}{2N}W_m\cdot q(\V)\nonumber\\
	&\hspace{.5cm}+\Ind_{m\geq 2}\bigg[-\frac{1}{N}\sum_{k=1}^{m-1}(m-k)W_{m-2k}-2\sum_{k=1}^{m-1}(m-k)V_kW_{m-k}\bigg]q(\V)\nonumber\\
	&=-\frac{2N+1}{2N}\left[u\p{u}\mathcal{R}^+P\right]_N\nonumber\\
	&\hspace{.5cm}+\frac{1}{2N}\left[-2\left(\sum_{k=1}^\infty u^{-k+1}\mathcal{R}^+\p{u}\mathcal{M}_{u^{-k}}\mathcal{R}^+\right)P\right]_N\nonumber\\
	&\hspace{.5cm}-2\left[\left(\sum_{k=1}^\infty v_ku\mathcal{R}^+\p{u}\mathcal{M}_{u^{-k}}\mathcal{R}^+\right)P\right]_N\nonumber\\
	&=-\frac{2N+1}{2N}\left[u\p{u}\mathcal{R}^+P\right]_N-\frac{1}{N}[\mathcal{Z}_2^+P]_N-2[\mathcal{Y}_1^+P]_N.\label{eq:DWmqVposSpN}
	\end{align}
	Similarly, for $m<0$, we apply (\ref{eq:DAmnegSpN}) to get 
	\begin{align*}
	(\Delta_{\Sp(N)} W_m)\cdot q(\V)&=\frac{2N+1}{2N}\left[u\p{u}\mathcal{R}^-P\right]_N+\frac{1}{N}[\mathcal{Z}_2^-P]_N+2[\mathcal{Y}_1^-P]_N.
	\end{align*}
	Since $\mathcal{Y}_1^+,\mathcal{Z}_2^+$ annihilates $\C[u^{-1}]$ while $\mathcal{Y}_1^-,\mathcal{Z}_2^-$ annihilates $\C[u]$, we have that for all $m\in\Z$, 
	\begin{align}
	(\Delta_{\Sp(N)} W_m)\cdot q(\V)&=-\frac{2N+1}{2N}[\mathcal{N}_1P]_N-\frac{1}{N}[\mathcal{Z}_2P]_N-2[\mathcal{Y}_1P]_N.\label{eq:DWmqVSpN}
	\end{align}
	
	For the middle term in \eqref{eq:LaplacianPNSpN}, we note the similarity between the derivative formulas \eqref{eq:XtrAmXAp} and \eqref{eq:cross.sum.spN} for the $\SO(N,\R)$ and $\Sp(N)$ cases, resp. Hence, as in \eqref{eq:XWmXqV}, 
	\begin{align}
	\sum_{X\in\beta_{\symp(N)}}\wX W_m\cdot\wX q(\V)&=\frac{1}{4N^2}[\K_1 P]_N.\label{eq:XWmXqVSpN}
	\end{align}
	
	For the last term in (\ref{eq:LaplacianPNSpN}), we have, for each $X\in\beta_{\symp(N)}$, 
	\begin{align}
	\wX^2 q(\V)&=\sum_{|k|\geq 1}\left(\p{v_k}q\right)(\V)\cdot\wX^2 V_k+\sum_{|j|,|k|\geq 1}\left(\pp{v_j}{v_k}q\right)(\V)\cdot \wX V_j\cdot \wX V_k.\label{eq:X2qVSpN},
	\end{align}
	as with \eqref{eq:X2qV}. 
	
	In summing the first term in (\ref{eq:X2qVSpN}) over $X\in\beta_{\symp(N)}$, we break up the sum for positive and negative $k$. Using (\ref{eq:DAmposSpN}) and (\ref{eq:DAmnegSpN}), we have
	\begin{align}
	\sum_{X\in\beta_{\symp(N)}}\sum_{k=1}^\infty\left(\p{v_k}q\right)(\V)\cdot\wX^2 V_k
	=&-\frac{2N+1}{2N}\sum_{k=1}^\infty kV_k\left(\p{v_k}q\right)(\V)\nonumber\\
	&-\frac{1}{N}\sum_{k=2}^\infty\sum_{\ell=1}^{k-1}(k-\ell)V_{k-2\ell}\left(\p{v_k}q\right)(\V)\nonumber\\
	&-2\sum_{k=2}^\infty\sum_{\ell=1}^{k-1}(k-\ell)V_\ell V_{k-\ell}\left(\p{v_k}q\right)(\V)\label{eq:N0Z1Y2posSpN}
	\end{align}
	and
	\begin{align}
	\sum_{X\in\beta_{\symp(N)}}\sum_{k=-\infty}^{-1}\left(\p{v_k}q\right)(\V)\cdot\wX^2 V_k
	=&\frac{2N+1}{2N}\sum_{k=-\infty}^{-1} kV_k\left(\p{v_k}q\right)(\V)\nonumber\\
	&-\frac{1}{N}\sum_{k=-\infty}^{-2}\sum_{\ell=k+1}^{-1}(k-\ell)V_{k-2\ell}\left(\p{v_k}q\right)(\V)\nonumber\\
	&-2\sum_{k=-\infty}^{-2}\sum_{\ell=k+1}^{-1}(-k+\ell)V_\ell V_{-k-\ell}\left(\p{v_k}q\right)(\V)\label{eq:N0Z1Y2negSpN}
	\end{align}
	
	Summing the second term in (\ref{eq:X2qVSpN}) over $X\in\beta_{\symp(N)}$ and using magic formula (\ref{eq:cross.sum.spN}) yields
	\begin{align}
	\sum_{X\in\beta_{\symp(N)}}\sum_{|j|,|k|\geq 1}&\left(\pp{v_j}{v_k}q\right)(\V)\cdot \wX V_j\cdot \wX V_k\nonumber\\
	&=\frac{1}{4N^2}\sum_{|j|,|k|\geq 1}jk(V_{k-j}-V_{k+j})\left(\pp{v_j}{v_k}q\right)(\V).\label{eq:K2SpN}
	\end{align}
	Adding (\ref{eq:N0Z1Y2posSpN}), (\ref{eq:N0Z1Y2negSpN}), and (\ref{eq:K2SpN}) together and multiplying by $W_m$, we get
	\begin{align}
	W_m\cdot (\Delta_{\Sp(N)} q(\V))&=\left[\left(-\frac{2N+1}{2N}\mathcal{N}_0-\frac{1}{N}\mathcal{Z}_1-2\mathcal{Y}_2+\frac{1}{4N^2}\mathcal{K}_2\right)P\right]_N. \label{eq:WmDqVSpN}
	\end{align}
	
	Combining (\ref{eq:DWmqVSpN}), (\ref{eq:XWmXqVSpN}), and (\ref{eq:WmDqVSpN}) proves (\ref{eq:intertwine1}) for $\Sp(N)$. Equation (\ref{eq:intertwine1exp}) now follows.
\end{proof}

\begin{proof}[Proof of Theorem \ref{thm:SBintertwine} for $\Sp(N)$] \label{pf:thm:SBintertwineSpN}
	The proof for $\Sp(N)$ is entirely analogous to the proof for $\SO(N,\R)$. Using the intertwining formula for $e^{\frac{t}{2}\Delta_{\Sp(N)}}$  (\ref{eq:intertwine1exp}), we have $e^{\frac{\tau}{2}\Delta_{\Sp(N)}}P_N=[e^{\frac{\tau}{2}\mathcal{D}_N^{(4)}}P]_N$.  Corollary \ref{cor:DNinvariant} shows that $[e^{\frac{\tau}{2}\mathcal{D}_N^{(4)}}P]_N$ is a well-defined trace polynomial on $\Sp(N)$, so its analytic continuation to $\Sp(N,\C)$ is given by the same trace polynomial function. Hence $[e^{\frac{\tau}{2}\mathcal{D}_N^{(4)}}P]_N$, viewed as a holomorphic function on $\Sp(N,\C)$, is the analytic continuation of $e^{\frac{\tau}{2}\Delta_{\Sp(N)}}P_N$, and so is equal to $\mathbf{B}_{s,\tau}^{\Sp(N)} P_N$. 
\end{proof}

\subsection{Intertwining formulas for $\mathcal{A}_{s,\tau}^{\Sp(N,\C)}$}

\begin{theorem} \label{thm:intertwiningIIpolySpN}
	Fix $s\in\R$ and $\tau=t+i\theta\in\C$. There are collections $\{T_{\varepsilon}^{s,\tau}\,:\,\varepsilon\in\mathscr{E}\}$, $\{U_{\varepsilon}^{s,\tau}\,:\,\varepsilon\in\mathscr{E}\}$ such that for each $\varepsilon\in\mathscr{E}$, $T_\varepsilon^{s,\tau}$ and $U_\varepsilon^{s,\tau}$ are certain finite sums of monomials of trace degree $|\varepsilon|$ such that
		\begin{equation} \label{eq:ASpstVe}
		\ASpst V_\varepsilon=[T_\varepsilon^{s,\tau}]_N+\frac{1}{N}[U_{\varepsilon}^{s,\tau}]_N,
		\end{equation}
		Moreover, let $\{S_{\varepsilon,\delta}^{s,\tau}\,:\,\varepsilon,\delta\in\mathscr{E}\}\subseteq\mathscr{W}$ be the collection of word polynomials from Theorem \ref{thm:intertwiningIIpoly}. Then for $\varepsilon,\delta\in\mathscr{E}$,
		\begin{equation} \label{eq:RedstSpN}
		\sum_{\ell=1}^{d_{\symp(N)}}\left[\left(s-\frac{t}{2}\right)(\wX_\ell V_\varepsilon)(\wX_\ell V_\delta)+\frac{t}{2}(\wY_\ell V_\varepsilon)(\wY_\ell V_\delta)-\theta(\wX_\ell V_\varepsilon)(\wY_\ell V_\delta)\right]=\frac{1}{N^2}[S_{\varepsilon ,\delta}^{s,\tau}]_N,
		\end{equation}
		where $\beta_{\symp(N)}=\{X_\ell\}_{\ell=1}^{d_{\symp(N)}}$ and $Y_\ell=iX_\ell$. 
\end{theorem}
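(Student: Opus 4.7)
The plan is to follow the template of the proof of Theorem \ref{thm:intertwiningIIpoly}, substituting the $\Sp(N)$ magic formulas of Proposition \ref{prop:magicSpNv2} for the $\SO(N,\R)$ magic formulas at each step. Set $\beta_+ = \beta_{\symp(N)}$ and $\beta_- = i\beta_{\symp(N)}$, fix a word $\varepsilon = (\varepsilon_1,\dots,\varepsilon_m)\in\mathscr{E}_m$, and start from the identity
\begin{align*}
(\wX^2 V_\varepsilon)(A)
&=\sum_{j=1}^m\wtr\bigl(A^{\varepsilon_1}\cdots(AX^2)^{\varepsilon_j}\cdots A^{\varepsilon_m}\bigr)\\
&\quad+2\sum_{1\le j<k\le m}\wtr\bigl(A^{\varepsilon_1}\cdots(AX)^{\varepsilon_j}\cdots(AX)^{\varepsilon_k}\cdots A^{\varepsilon_m}\bigr)
\end{align*}
used in the $\SO(N,\R)$ argument. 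Summing the diagonal (first) sum over $X\in\beta_\pm$ by magic formula \eqref{eq:magicSpN1v2} contributes $\pm\frac{2N+1}{2N}n_\pm(\varepsilon)V_\varepsilon(A)$ for integers $n_\pm(\varepsilon)$ with $|n_\pm(\varepsilon)|\le|\varepsilon|$, which splits cleanly into an $O(1)$ piece and an $O(1/N)$ correction. Summing the off-diagonal (second) sum over $X\in\beta_\pm$ by magic formula \eqref{eq:magicSpN2v2}, and invoking $\Omega A^\intercal\Omega^{-1}=A^{-1}$ on $\Sp(N,\C)$, produces for each pair $(j,k)$ a product of two $\wtr$-terms of total trace degree $|\varepsilon|$ (the $O(1)$ piece) together with a $\frac{1}{2N}$-correction of the form $-\frac{1}{2N}\wtr(A^{\ejkzero}(A^{\ejkone})^{-1}A^{\ejktwo})$; this is a single $\wtr$ indexed by a word of length $|\varepsilon|$.

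The $\wY^2 V_\varepsilon$ and mixed $\wX\wY V_\varepsilon$ sums are handled by the identical argument, noting that each $\wY$-derivative produces an extra factor of $i$ times a sign depending on whether the entry $\varepsilon_j$ being differentiated lies in $\{+1,-1\}$ or $\{+*,-*\}$; these adjustments do not alter trace-degree counts. Taking the combination weighted by $(s-t/2,\,t/2,\,-\theta)$ dictated by $\ASpst$, and separating by order in $\frac{1}{N}$, yields word polynomials $T_\varepsilon^{s,\tau}$ and $U_\varepsilon^{s,\tau}\in\mathscr{W}$, both of trace degree $|\varepsilon|$ and satisfying \eqref{eq:ASpstVe}.

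For the cross-sum identity \eqref{eq:RedstSpN}, I would apply cyclic invariance of $\wtr$ to rewrite each of $(\wX V_\varepsilon)(\wX V_\delta)$, $(\wY V_\varepsilon)(\wY V_\delta)$, $(\wX V_\varepsilon)(\wY V_\delta)$ as a double sum of terms $\pm\wtr(XA^{\varepsilon(j)})\wtr(XA^{\delta(k)})$ (augmented by powers of $i$ in the $Y$-cases), where $\varepsilon(j),\delta(k)$ are suitable cyclic permutations of $\varepsilon,\delta$. Summing over $X\in\beta_+$ via magic formula \eqref{eq:magicSpN4v2} and using $\Omega A^\intercal\Omega=-A^{-1}$ on $\Sp(N,\C)$ yields
\[
\sum_{X\in\beta_+}\wtr(XA^{\varepsilon(j)})\wtr(XA^{\delta(k)})=-\tfrac{1}{4N^2}\bigl(\wtr(A^{\varepsilon(j)^\intercal}A^{\delta(k)})+\wtr(A^{\varepsilon(j)}A^{\delta(k)})\bigr),
\]
and similarly for the $Y$-versions. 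Combining with weights $(s-t/2,\,t/2,\,-\theta)$ gives a word polynomial of trace degree $|\varepsilon|+|\delta|$ with prefactor $\frac{1}{N^2}$, which I would then verify can be taken to be the same $S_{\varepsilon,\delta}^{s,\tau}$ built in Theorem \ref{thm:intertwiningIIpoly} once the free $\pm$ choices are aligned.

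The main obstacle is precisely this matching of signs. Two discrepancies between the $\Sp$ and $\SO$ cases must be absorbed consistently: the overall minus sign in \eqref{eq:magicSpN2v2} (absent in \eqref{eq:magic2}) and the sign in $\Omega A^\intercal\Omega=-A^{-1}$ (absent in $A^\intercal=A^{-1}$ on $\SO(N,\C)$). Together with the factor-of-two discrepancy between the normalizations $\wtr:\tr=\tfrac12:1$ embedded in the evaluation $[\cdot]_N$ and the prefactor $\tfrac{1}{4N^2}$ of \eqref{eq:magicSpN4v2} versus $\tfrac{1}{N^2}$ of \eqref{eq:magic4}, these must be absorbed into the indeterminate $\pm$ signs in the construction of $S_{\varepsilon,\delta}^{s,\tau}$ so that one and the same word polynomial realizes \eqref{eq:RedstSpN} here and \eqref{eq:Redst} in the $\SO(N,\R)$ case.
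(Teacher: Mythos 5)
Your overall plan mirrors the paper's proof exactly: run the template of the proof of Theorem \ref{thm:intertwiningIIpoly} with the $\Sp(N)$ magic formulas of Proposition \ref{prop:magicSpNv2} substituted for the $\SO(N,\R)$ ones. The construction of $T_\varepsilon^{s,\tau}$ and $U_\varepsilon^{s,\tau}$ via \eqref{eq:magicSpN1v2}, \eqref{eq:magicSpN2v2}, and $\Omega A^\intercal\Omega^{-1}=A^{-1}$ matches the paper and is correct. Part (2), however, contains a real error in your handling of the cross-sum.

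The problematic step is
\[
\sum_{X\in\beta_+}\wtr(XA^{\varepsilon(j)})\wtr(XA^{\delta(k)})=-\tfrac{1}{4N^2}\bigl(\wtr(A^{\varepsilon(j)^\intercal}A^{\delta(k)})+\wtr(A^{\varepsilon(j)}A^{\delta(k)})\bigr),
\]
which you obtain by taking \eqref{eq:magicSpN4v2} at face value and substituting $\Omega A^\intercal\Omega=-A^{-1}$. The relative \emph{plus} sign between the two $\wtr$-terms is fatal: it cannot be matched to $S_{\varepsilon,\delta}^\pm=\sum_{j,k}\pm\bigl(v_{\varepsilon(j)^\intercal\delta(k)}-v_{\varepsilon(j)\delta(k)}\bigr)$, because there the indeterminate $\pm$ is an \emph{overall} sign per $(j,k)$; the relative minus between the two monomials in each term is not free. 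So $\pm(a-b)$ can be $a-b$ or $b-a$, but never $\pm(a+b)$, and no choice of the free $\pm$'s makes the same word polynomial realize both \eqref{eq:RedstSpN} and \eqref{eq:Redst}. You cannot "absorb" a flipped relative sign this way.

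The resolution is that the printed \eqref{eq:magicSpN4v2} has a typo: multiplying \eqref{eq:magicSpN3v2} on the right by $B$ and applying $\wtr$ (which is exactly how the paper proves it) gives $\sum_X\wtr(XA)\wtr(XB)=\frac{1}{4N^2}\bigl(\wtr(\Omega A^\intercal\Omega^{-1}B)-\wtr(AB)\bigr)$, i.e.\ $\Omega^{-1}$ on the inside, not $\Omega$. Since $\Omega A^\intercal\Omega^{-1}=A^{-1}$ on $\Sp(N,\C)$, the corrected formula produces the \emph{difference} $\frac{1}{4N^2}\bigl(\wtr(A^{-1}B)-\wtr(AB)\bigr)$, matching the sign structure of \eqref{eq:magic4} after $A^\intercal=A^{-1}$ on $\SO(N,\C)$. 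Note you already used $\Omega A^\intercal\Omega^{-1}=A^{-1}$ correctly in the off-diagonal part of your $\wX^2 V_\varepsilon$ computation — the inconsistency is confined to the cross-sum. Once you rerun that step with $\Omega^{-1}$, the only remaining discrepancy with the $\SO(N,\R)$ case is the overall scalar prefactor (the $\frac{1}{4N^2}$ vs $\frac{1}{N^2}$ and the $\wtr$ vs $\tr$ normalization you flag), which is a fixed multiplicative constant, not a per-term sign, and so is genuinely benign bookkeeping rather than something that can be "absorbed into the $\pm$."
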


For the proof, we will employ the conventions, mutandis mutatis, as those used in the proof of Theorem \ref{thm:intertwiningIIpoly} (see the remarks following the theorem statement). 

\begin{proof}
	Fix a word $\varepsilon=(\varepsilon_1,\hdots ,\varepsilon_m)\in\mathscr{E}$. Then for each $X\in\beta_{\pm}$ and $A\in\Sp(N)$, we apply the product rule twice to get
	\begin{align}
	(\wX^2 V_\varepsilon)(A)=&\sum_{j=1}^m\tr(A^{\varepsilon_1}\cdots (AX^2)^{\varepsilon_j}\cdots A^{\varepsilon_m}) \label{eq:trAXXSpN}\\
	&+2\sum_{1\leq j<k\leq m}\tr(A^{\varepsilon_1}\cdots (AX)^{\varepsilon_j}\cdots (AX)^{\varepsilon_k}\cdots A^{\varepsilon_m}). \label{eq:trAXAXSpN}
	\end{align}
	
	Applying magic formula \eqref{eq:magicSpN1v2} to each term in (\ref{eq:trAXAXSpN}), we have
	\begin{equation}
	\sum_{X\in\beta_\pm}\tr(A^{\varepsilon_1}\cdots (AX^2)^{\varepsilon_j}\cdots A^{\varepsilon_m})=\pm\frac{2N+1}{2N}\tr (A^{\varepsilon_1}\cdots A^{\varepsilon_j}\cdots A^{\varepsilon_m})=\pm\frac{2N+1}{2N}V_\varepsilon (A).
	\end{equation}
	
	Summing over $1\leq j\leq m$ now gives
	\begin{equation}
	\sum_{X\in\beta_\pm}\sum_{j=1}^m\tr(A^{\varepsilon_1}\cdots (AX^2)^{\varepsilon_j}\cdots A^{\varepsilon_m})=\frac{2N+1}{2N}n_\pm(\varepsilon)V_\varepsilon(A),
	\end{equation}
	where $n_\pm(\varepsilon)\in\Z$ and $|n_\pm (\varepsilon)|\leq|\varepsilon|$ (where the integers $n_\pm (\varepsilon)$ are not necessarily the same as in the $\SO(N,\R)$ case). For (\ref{eq:trAXAXSpN}), we can express each term in the sum as
	\begin{equation} \label{eq:trAXAXtermSpN}
	\tr(A^{\varepsilon_1}\cdots (AX)^{\varepsilon_j}\cdots (AX)^{\varepsilon_k}\cdots A^{\varepsilon_m})=\pm\tr(A^\ejkzero XA^\ejkone XA^\ejktwo),
	\end{equation}
	so $\ejkzero$, $\ejkone$, and $\ejktwo$ are substrings of $\varepsilon$ such that $\ejkzero\ejkone\ejktwo=\varepsilon$. Summing (\ref{eq:trAXAXtermSpN}) over $X\in\beta_\pm$ by magic formula (\ref{eq:magicSpN2v2}), we have
	\begin{align}
	\sum_{X\in\beta_\pm} \tr(A^{\varepsilon_1}\cdots (AX)^{\varepsilon_j}\cdots (AX)^{\varepsilon_k}\cdots A^{\varepsilon_m})&=\pm\bigg[-\frac{1}{2N}\tr(A^\ejkzero (A^\ejkone)^{-1}A^\ejktwo)\nonumber\\
	&\hspace{1.5cm}-\tr(A^\ejkzero A^\ejktwo)\tr(A^\ejkone)\bigg]\nonumber\\
	&=\pm\frac{1}{2N}V_\ejkthree (A) \pm V_{\ejkzero\ejktwo}(A)V_\ejkone(A),
	\end{align}
	where $\ejkthree$ is the word of length $|\varepsilon|$ such that $V_\ejkthree(A)=\tr(A^\ejkzero (A^\ejkone)^{-1}A^\ejktwo)$. Thus summing (\ref{eq:trAXAXSpN}) over $X\in\beta_\pm$ gives
	\begin{align}
	\sum_{X\in\beta_\pm}\sum_{1\leq j<k\leq m}&\tr(A^{\varepsilon_1}\cdots (AX)^{\varepsilon_j}\cdots (AX)^{\varepsilon_k}\cdots A^{\varepsilon_m})\nonumber\\
	&=\frac{1}{2N}\sum_{1\leq j<k\leq m}\pm V_{\ejkthree}(A)+\sum_{1\leq j<k\leq m}\pm V_{\ejkone}(A)V_{\ejkzero\ejktwo}(A).\label{eq:QepsilonSpN}
	\end{align}
	We define word polynomials corresponding to (\ref{eq:QepsilonSpN}) by
	\begin{align}
	T_{\varepsilon,0}^\pm&=\sum_{1\leq j<k\leq m}\pm v_{\ejkthree}\\
	T_{\varepsilon,1}^\pm&=\sum_{1\leq j<k\leq m}\pm v_{\ejkzero\ejktwo}v_{\ejkone}.
	\end{align}
	
	Now if $X\in\beta_+$ and $Y=iX\in\beta_-$, then
	\begin{align}
	(\wX\wY V_\varepsilon)(A)=&\frac{i}{2}\sum_{j=1}^m\pm \tr(A^{\varepsilon_1}\cdots (AX^2)^{\varepsilon_j}\cdots A^{\varepsilon_m}) \label{eq:itrAXXSpN}\\
	&+i\sum_{1\leq j<k\leq m}\pm \tr(A^{\varepsilon_1}\cdots (AX)^{\varepsilon_j}\cdots (AX)^{\varepsilon_k}\cdots A^{\varepsilon_m}).\label{eq:itrAXAXSpN}
	\end{align}
	Summing (\ref{eq:itrAXXSpN}) over $X\in\beta_+$ and applying \eqref{eq:magic1SpN} gives
	\begin{equation}
	\sum_{X\in\beta_+}\sum_{j=1}^m \pm \tr(A^{\varepsilon_1}\cdots (AX^2)^{\varepsilon_j}\cdots A^{\varepsilon_m})=\frac{2N+1}{2N}\eta(\varepsilon)V_\varepsilon(A)
	\end{equation}
	where $\eta(\varepsilon)\in\Z$ and $|\eta(\varepsilon)|\leq |\varepsilon|$. In addition, summing (\ref{eq:itrAXAXSpN}) over $X\in\beta_+$, we have
	\begin{align}
	\sum_{X\in\beta_+}\sum_{1\leq j<k\leq m}&\pm \tr(A^{\varepsilon_1}\cdots (AX)^{\varepsilon_j}\cdots (AX)^{\varepsilon_k}\cdots A^{\varepsilon_m})\nonumber\\
	&=-\frac{1}{2N}\sum_{1\leq j<k\leq m}V_{\ejkthree}(A)-\sum_{1\leq j<k\leq m}\pm V_{\ejkzero\ejktwo}(A)V_{\ejkone}(A).
	\end{align}
	
	We define corresponding word polynomials
	\begin{align}
	T_{\varepsilon,2}&=i\sum_{1\leq j<k\leq m}\pm v_{\ejkthree}\\
	T_{\varepsilon,3}&=i\sum_{1\leq j<k\leq m}\pm v_{\ejkzero\ejktwo}v_{\ejkone}.
	\end{align}
	Putting this together, we define
	\begin{align}
	T_\varepsilon^{s,\tau}&=\left(s-\frac{t}{2}\right)\left(n_+(\varepsilon)v_\varepsilon +  2T_{\varepsilon,1}^+\right)+\frac{t}{2}\left(n_-(\varepsilon)v_\varepsilon+2T_{\varepsilon,1}^-\right)-\theta\left(i\eta(\varepsilon)+2T_{\varepsilon,3}\right),\\
	U_\varepsilon^{s,\tau}&=\left(s-\frac{t}{2}\right)\left(-\frac{n_+(\varepsilon)}{2}v_\varepsilon +  T_{\varepsilon,0}^+\right)+\frac{t}{2}\left(-\frac{n_-(\varepsilon)}{2}v_\varepsilon+T_{\varepsilon,0}^-\right)-\theta\left(-i\frac{\eta(\varepsilon)}{2}+T_{\varepsilon,2}\right).
	\end{align}
	By construction, $T_\varepsilon^{s,\tau}$ and $U_\varepsilon^{s,\tau}$ are of the required form and satisfy (\ref{eq:ASpstVe}). 
	
	The proof of \eqref{eq:RedstSpN} is essentially identical to that of Theorem \ref{thm:intertwiningIIpoly}(2); this is due to the similarity of magic formulas \eqref{eq:magic3} and \eqref{eq:magic3SpN} for $\SO(N,\R)$ and $\Sp(N)$. 
\end{proof}

\begin{theorem}[Intertwining formula for $\ASpst$] \label{thm:ASpstintertwine}
	Fix $s\in\R$ and $\tau=t+i\theta\in\C$. Let $\left\{T_\varepsilon^{s,\tau}\, :\,\varepsilon\in\mathscr{E}\right\}$, $\left\{U_\varepsilon^{s,\tau}\, :\,\varepsilon\in\mathscr{E}\right\}$, and $\left\{S_{\varepsilon,\delta}^{s,\tau}\, :\,\varepsilon,\delta\in\mathscr{E}\right\}$ be as in Theorem \ref{thm:intertwiningIIpolySpN}, and $\wL_2^{s,\tau}$ be as in Theorem \ref{thm:intertwiningII}. Define first and second order differential operators
	\begin{align}
	\mathcal{C}_0^{s,\tau}&=\frac{1}{2}\sum_{\varepsilon\in\mathscr{E}}T_{\varepsilon}^{s,\tau}\p{v_\varepsilon}\\
	\mathcal{C}_1^{s,\tau}&=\frac{1}{2}\sum_{\varepsilon\in\mathscr{E}}U_{\varepsilon}^{s,\tau}\p{v_\varepsilon}
	\end{align}
	Then for all $N\in\N$ and $P\in\mathscr{W}$, 
	\begin{equation} \label{eq:intertwine2SpN}
	\frac{1}{2}\ASpst P_N=\left[\left(\mathcal{C}_0^{s,\tau}+\frac{1}{N}\mathcal{C}_1^{s,\tau}+\frac{1}{N^2}\wL_2^{s,\tau}\right)P\right]_N.
	\end{equation}
\end{theorem}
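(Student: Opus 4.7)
The plan is to mirror the proof of Theorem \ref{thm:intertwiningII} verbatim, simply swapping the $\SO(N,\R)$ polynomial identities for their $\Sp(N)$ counterparts supplied by Theorem \ref{thm:intertwiningIIpolySpN}. Fix $P\in\mathscr{W}$. For any left-invariant vector field $\wX$ on $\Sp(N,\C)$, the chain rule applied twice yields
\begin{align*}
\wX^2 P_N &= \sum_{\varepsilon\in\mathscr{E}}\left(\tfrac{\partial P}{\partial v_\varepsilon}\right)(\V)\cdot(\wX^2 V_\varepsilon) \\
&\quad + \sum_{\varepsilon,\delta\in\mathscr{E}}\left(\tfrac{\partial^2 P}{\partial v_\varepsilon\partial v_\delta}\right)(\V)\cdot(\wX V_\varepsilon)(\wX V_\delta),
\end{align*}
with analogous identities for $\wY^2 P_N$ and $\wX\wY P_N$.

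Next, I would plug these expansions into the definition of $\ASpst$ and interchange the finite sums. Grouping the terms by the order of differentiation in $P$, this gives
\begin{align*}
\ASpst P_N &= \sum_{\varepsilon\in\mathscr{E}}\left(\tfrac{\partial P}{\partial v_\varepsilon}\right)(\V)\cdot (\ASpst V_\varepsilon) \\
&\quad + \sum_{\varepsilon,\delta\in\mathscr{E}}\left(\tfrac{\partial^2 P}{\partial v_\varepsilon\partial v_\delta}\right)(\V)\cdot\sum_{\ell=1}^{d_{\symp(N)}}\Bigl[\left(s-\tfrac{t}{2}\right)(\wX_\ell V_\varepsilon)(\wX_\ell V_\delta) \\
&\quad\quad + \tfrac{t}{2}(\wY_\ell V_\varepsilon)(\wY_\ell V_\delta)-\theta(\wX_\ell V_\varepsilon)(\wY_\ell V_\delta)\Bigr].
\end{align*}

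Now I would invoke Theorem \ref{thm:intertwiningIIpolySpN}: part (1) identifies each single-derivative term $\ASpst V_\varepsilon$ as $[T_\varepsilon^{s,\tau}]_N + \tfrac{1}{N}[U_\varepsilon^{s,\tau}]_N$, while part (2) identifies the inner sum in the double-derivative term as $\tfrac{1}{N^2}[S_{\varepsilon,\delta}^{s,\tau}]_N$. Substituting these in and dividing by $2$ produces
\begin{equation*}
\tfrac{1}{2}\ASpst P_N = \left[\tfrac{1}{2}\sum_{\varepsilon}T_\varepsilon^{s,\tau}\tfrac{\partial P}{\partial v_\varepsilon} + \tfrac{1}{2N}\sum_\varepsilon U_\varepsilon^{s,\tau}\tfrac{\partial P}{\partial v_\varepsilon} + \tfrac{1}{2N^2}\sum_{\varepsilon,\delta}S_{\varepsilon,\delta}^{s,\tau}\tfrac{\partial^2 P}{\partial v_\varepsilon \partial v_\delta}\right]_N,
\end{equation*}
which is exactly $[(\mathcal{C}_0^{s,\tau}+\tfrac{1}{N}\mathcal{C}_1^{s,\tau}+\tfrac{1}{N^2}\wL_2^{s,\tau})P]_N$ by the definitions in the theorem statement.

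There is essentially no genuine obstacle: all of the substantive work—deriving the word-polynomial expressions for $\ASpst V_\varepsilon$ and for the quadratic forms in $\wX_\ell V_\varepsilon, \wY_\ell V_\delta$—has already been absorbed into Theorem \ref{thm:intertwiningIIpolySpN}. The only bookkeeping subtlety is tracking the factor of $\tfrac{1}{2}$, which is built into the definitions of $\mathcal{C}_0^{s,\tau}$, $\mathcal{C}_1^{s,\tau}$, and $\wL_2^{s,\tau}$ precisely so that both sides of \eqref{eq:intertwine2SpN} match.
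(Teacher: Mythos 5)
Your proof is correct and follows essentially the same route as the paper's: apply the chain rule to expand $\wX^2 P_N$ (and the analogous expressions for $\wY^2$ and $\wX\wY$), regroup $\ASpst P_N$ into single- and double-derivative terms, and substitute the word-polynomial identities from Theorem \ref{thm:intertwiningIIpolySpN}.
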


\begin{proof}
	For each $X\in\symp(N)$, we apply the chain rule to get
	\begin{align*}
	\wX^2P_N&=\sum_{\varepsilon\in\mathscr{E}}\wX\left[\left(\dP\right)(\V)\cdot (\wX V_\varepsilon)\right]\\
	&=\sum_{\varepsilon\in\mathscr{E}}\left(\dP\right) (\V)\cdot (\wX^2 V_\varepsilon)+\sum_{\varepsilon,\delta\in\mathscr{E}}\left( \ddP \right)(\V)\cdot (\wX V_\varepsilon)(\wX V_\delta).
	\end{align*} 
	Hence
	\begin{align*}
	\ASpst P_N=&\sum_{\varepsilon\in\mathscr{E}}\left(\dP\right)(\V)\cdot\ASpst V_\varepsilon\\
	&+\sum_{\varepsilon,\delta\in\mathscr{E}}\left(\ddP\right)(\V)\sum_{\ell=1}^{\dSON}\bigg[\left(s-\frac{t}{2}\right)(\wX_\ell V_\varepsilon)(\wX_\ell V_\delta)\\
	&\hspace{5cm}+\frac{t}{2}(\wY_\ell V_\varepsilon)(\wY_\ell V_\delta)-\theta(\wX_\ell V_\varepsilon)(\wY_\ell V_\delta)\bigg].
	\end{align*}
	The result now follows from Theorem \ref{thm:intertwiningIIpolySpN}. 
\end{proof}

\subsection{Limit theorems for the Segal-Bargmann transform on $\Sp(N)$} \label{pf:thm:freeSBlimitSpN}

In this section, we outline the proof of Theorem \ref{thm:freeSBlimit} for $\Sp(N)$. Since the techniques used are similar to those used in Section \ref{sec:limittheoremsSONR} to prove the $\SO(N,\R)$ case, we do not provide the full details. 

The following two lemmas are the analogues of Lemmas \ref{lem:bianeSON}, \ref{lem:trevalQ}, and \ref{lem:treval2} for $\Sp(N)$. The proofs are very similar, and so are not included: the key concentration result is again Lemma \ref{cor:expmatrixnorm}, and the only change required is to replace intertwining formulas \eqref{eq:intertwine1} and \eqref{eq:intertwine2} for $\SO(N,\R)$ with intertwining formulas \eqref{eq:intertwine1} and \eqref{eq:intertwine2SpN} for $\Sp(N)$ wherever applicable.

\begin{lemma}[L\'{e}vy, \cite{Levy:master}]
	For $s>0$ and $k\in\Z$, 
	\begin{equation}
	\lim_{N\to\infty}\int_{\Sp(N)}\tr (A^k)\, \rho_s^{\Sp(N)}(A)\, dA=\nu_k\left(s\right).
	\end{equation}
\end{lemma}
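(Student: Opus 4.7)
The plan is to mirror the proof of Lemma \ref{lem:bianeSON} for the $\SO(N,\R)$ case, swapping in the $\Sp(N)$ intertwining formula. Since the trace polynomial functional calculus on $\Sp(N)$ substitutes $v_k \mapsto \wtr(A^k)$, I would interpret the integrand as the trace polynomial $[v_k]_N$ under the $\Sp(N)$ calculus (i.e., with the $\wtr$ normalization; in the stated identity $\tr$ should be read as $\wtr$, since $\wtr(I_{2N}^j)=1$ is what makes evaluation at $I_{2N}$ correspond to evaluation at $\mathbf{1}$). First, I would apply the heat kernel identity \eqref{eq:heatkernel} to rewrite
\begin{equation*}
\int_{\Sp(N)}\wtr(A^k)\,\rho_s^{\Sp(N)}(A)\,dA = \left(e^{\frac{s}{2}\Delta_{\Sp(N)}}[v_k]_N\right)(I_{2N}).
\end{equation*}

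Next I would invoke Theorem \ref{thm:intertwine1} for $\Sp(N)$ (with $\tau = s$), yielding
\begin{equation*}
\left(e^{\frac{s}{2}\Delta_{\Sp(N)}}[v_k]_N\right)(I_{2N}) = \left[e^{\frac{s}{2}\mathcal{D}_N^{(4)}}v_k\right]_N(I_{2N}) = \left(e^{\frac{s}{2}(\mathcal{L}_0 + \frac{1}{N}\mathcal{L}_1^{(4)} + \frac{1}{N^2}\mathcal{L}_2^{(4)})}v_k\right)(\mathbf{1}),
\end{equation*}
using $\wtr(I_{2N}^j) = 1$ to pass from evaluation at $I_{2N}$ to evaluation at $\mathbf{1}$.

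Finally, since $v_k \in \C_k[u,u^{-1};\mathbf{v}]$ and each of $\mathcal{L}_0$, $\mathcal{L}_1^{(4)}$, $\mathcal{L}_2^{(4)}$ preserves this finite-dimensional subspace by Corollary \ref{cor:DNinvariant}, and the map $Q \mapsto Q(\mathbf{1})$ is a bounded linear functional on it, Corollary \ref{cor:expmatrixnorm} gives
\begin{equation*}
\lim_{N\to\infty}\left(e^{\frac{s}{2}(\mathcal{L}_0 + \frac{1}{N}\mathcal{L}_1^{(4)} + \frac{1}{N^2}\mathcal{L}_2^{(4)})}v_k\right)(\mathbf{1}) = \left(e^{\frac{s}{2}\mathcal{L}_0}v_k\right)(\mathbf{1}),
\end{equation*}
and Lemma \ref{lem:trevalQ} identifies the right-hand side as $\pi_s(v_k) = \nu_k(s)$.

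The crucial conceptual point, already emphasized in the introduction, is that the leading operator $\mathcal{L}_0$ is identical across the intertwining formulas for $\SO(N,\R)$, $\SU(N)$, $\U(N)$, and $\Sp(N)$; the $1/N$ and $1/N^2$ corrections $\mathcal{L}_1^{(4)}$ and $\mathcal{L}_2^{(4)}$ get washed out by the concentration estimate of Corollary \ref{cor:expmatrixnorm}, so the limit coincides with the one from the $\U(N)$ case obtained by Biane. There is no substantive obstacle: the argument is a verbatim transcription of the proof of Lemma \ref{lem:bianeSON}, with the only change being the replacement of $\mathcal{D}_N^{(1)}$ by $\mathcal{D}_N^{(4)}$.
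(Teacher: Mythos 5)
Your proof is correct and is essentially the paper's own argument: the paper explicitly states that this lemma follows by the same reasoning as Lemma \ref{lem:bianeSON}, with the $\SO(N,\R)$ intertwining operator $\mathcal{D}_N^{(1)}$ replaced by $\mathcal{D}_N^{(4)}$, which is precisely what you do. Your observation that $\tr$ must be read as $\wtr$ (the $\Sp(N)$-normalized trace with $\wtr(I_{2N})=1$, matching the functional calculus $v_k\mapsto\wtr(A^k)$) is a correct and careful reading of the paper's notation.
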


\begin{lemma} \label{lem:treval2SpN}
	Let $s>0$ and $\tau=t+i\theta\in\mathbb{D}(s,s)$. For any $Q\in\C [\mathbf{v}]$, 
	\begin{equation}
	\left(e^{\mathcal{C}_0^{s,\tau}}\iota(Q)\right)(\mathbf{1})=\pi_{s-\tau}Q. 
	\end{equation}
\end{lemma}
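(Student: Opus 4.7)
The plan is to mirror the proof of Lemma \ref{lem:treval2} for the $\SO(N,\R)$ case, substituting Theorem \ref{thm:ASpstintertwine} (the intertwining formula for $\ASpst$) for Theorem \ref{thm:intertwiningII} and the $\beta=4$ intertwining formula for $e^{\frac{\tau}{2}\Delta_{\Sp(N)}}$ for its $\SO(N,\R)$ counterpart. Since the pseudodifferential operator $\mathcal{L}_0$ in Definition \ref{def:Doperators} and the trace evaluation map $\pi_{s-\tau}$ are independent of $\beta$, Lemma \ref{lem:trevalQ} applies directly to give $(e^{\frac{1}{2}(s-\tau)\mathcal{L}_0}Q)(\mathbf{1})=\pi_{s-\tau}Q$. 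Thus the task reduces to proving the intermediate identity
$$\left(e^{\mathcal{C}_0^{s,\tau}}\iota(Q)\right)(\mathbf{1})=\left(e^{\frac{1}{2}(s-\tau)\mathcal{L}_0}Q\right)(\mathbf{1}).$$

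To establish this, I would first observe that for any holomorphic $f\colon\Sp(N,\C)\to\M_{2N}(\C)$ and any $X\in\symp(N)$, one has $\widetilde{JX}f=i\widetilde{X}f$, a standard consequence of the fact that $JX$ is the left-invariant vector field on $\Sp(N,\C)$ corresponding to $iX\in\symp(N,\C)$. Substituting this into the definition of $\ASpst$ collapses the three summands (involving $\wX^2$, $\wY^2$, and $\wX\wY$) into the single scalar multiple
$$\ASpst f|_{\Sp(N)}=(s-\tau)\Delta_{\Sp(N)}f.$$
Since $Q_N$ is a trace polynomial and extends holomorphically to $\Sp(N,\C)$, applying this identity iteratively and exponentiating gives
$$e^{\frac{1}{2}\ASpst}Q_N=e^{\frac{1}{2}(s-\tau)\Delta_{\Sp(N)}}Q_N.$$
I would then apply Theorem \ref{thm:ASpstintertwine} to the left-hand side and the $\beta=4$ case of \eqref{eq:intertwine1exp} to the right-hand side, using \eqref{eq:intertwineword}, to obtain the equality of trace polynomials
$$\left[e^{\mathcal{C}_0^{s,\tau}+\frac{1}{N}\mathcal{C}_1^{s,\tau}+\frac{1}{N^2}\wL_2^{s,\tau}}\iota(Q)\right]_N=\left[e^{\frac{1}{2}(s-\tau)\mathcal{D}_N^{(4)}}Q\right]_N.$$

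Finally, I would evaluate both sides at $I_{2N}\in\Sp(N)$. Because $\wtr(I_{2N}^k)=1$ for every $k\in\Z$, this evaluation corresponds on both sides to the linear functional $\varphi(P):=P(\mathbf{1})$, viewed as acting on $\mathscr{W}_m$ on the left and $\C_m[\mathbf{v}]$ on the right (where $m=\deg Q$). Corollary \ref{cor:expmatrixnorm} then permits passing to the $N\to\infty$ limit on both sides, killing the $1/N$ and $1/N^2$ corrections to leave the desired identity. The only genuinely nontrivial step is the holomorphic vector field identity $\widetilde{JX}f=i\widetilde{X}f$ in the second paragraph; everything else is a symbolic translation of the $\SO(N,\R)$ proof, made possible by the fact that the $N$-independent piece of every intertwining formula in this paper is governed by the same operator $\mathcal{L}_0$.
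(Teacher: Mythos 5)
Your proof is correct and matches the approach the paper intends: the paper explicitly states that Lemma~\ref{lem:treval2SpN} is proved like Lemma~\ref{lem:treval2}, with the $\SO(N,\R)$ intertwining formulas replaced by the $\Sp(N)$ ones, and that is exactly what you have carried out, down to the holomorphic identity $\widetilde{JX}f = i\widetilde{X}f$ giving $\ASpst f|_{\Sp(N)} = (s-\tau)\Delta_{\Sp(N)} f$, the concentration via Corollary~\ref{cor:expmatrixnorm}, and the final reduction to Lemma~\ref{lem:trevalQ}.
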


As in the proof of Theorem \ref{thm:freeSBlimit} for $\SO(N,\R)$, we first require the following analogue of Theorem \ref{thm:tracelimit}.

\begin{theorem} \label{thm:tracelimitSpN} 
	Let $s>0$ and $\tau=t+i\theta\in\mathbb{D}(s,s)$. For any $P\in\Cuuv$, 
	\begin{align}
	&\|P_N-[\pi_{s}P]_N\|^2_{L^2(\rho_{s}^{\Sp(N)})}=O\left(\frac{1}{N^2}\right),\hspace{.5cm}\mbox{and} \label{eq:tracelimitSpN}\\
	&\|P_N-[\pi_{s-\tau}P]_N\|^2_{L^2(\mu_{s,\tau}^{\Sp(N,\C)})}=O\left(\frac{1}{N^2}\right). \label{eq:tracelimitSpNC}
	\end{align}
\end{theorem}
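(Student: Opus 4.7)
The plan is to carry the proof of Theorem \ref{thm:tracelimit} for $\SO(N,\R)$ over to $\Sp(N)$ with minimal change, substituting Theorem \ref{thm:ASpstintertwine} for Theorem \ref{thm:intertwiningII} and Lemma \ref{lem:treval2SpN} for Lemma \ref{lem:treval2}. By the triangle inequality, I would reduce \eqref{eq:tracelimitSpNC} to polynomials of the form $P(u;\mathbf{v}) = u^k Q(\mathbf{v})$ with $k\in\Z$ and $Q\in\C[\mathbf{v}]$, and set $R_{s-\tau} = Q - \pi_{s-\tau}Q \in \C[\mathbf{v}]$. Since $R_{s-\tau}$ depends only on the $v_k$'s, $[R_{s-\tau}]_N(A)$ is scalar-valued, and using that the Hilbert-Schmidt norm on $\M_{2N}(\C)$ is given by $\wtr(\cdot\,\cdot^*)$, a short computation yields
\begin{equation*}
\|P_N(A) - [\pi_{s-\tau}P]_N(A)\|_{\M_{2N}(\C)}^2 = \wtr(A^k A^{*k})\,\bigl|[R_{s-\tau}]_N(A)\bigr|^2 = [v_{\varepsilon(k,k)}\,\iota(R_{s-\tau})\,\iota^*(R_{s-\tau})]_N(A).
\end{equation*}

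Integrating against $\mu_{s,\tau}^{\Sp(N,\C)}$ via \eqref{eq:heatKC} and applying the exponentiated form of Theorem \ref{thm:ASpstintertwine}, the left side of \eqref{eq:tracelimitSpNC} becomes
\begin{equation*}
\bigl(e^{\mathcal{C}_0^{s,\tau} + \frac{1}{N}\mathcal{C}_1^{s,\tau} + \frac{1}{N^2}\wL_2^{s,\tau}}\bigl(v_{\varepsilon(k,k)}\,\iota(R_{s-\tau})\,\iota^*(R_{s-\tau})\bigr)\bigr)(\mathbf{1}).
\end{equation*}
I would then split this via the triangle inequality into the difference from the analogous expression with $\frac{1}{N^2}\wL_2^{s,\tau}$ removed from the exponent, which is $O(1/N^2)$ by Corollary \ref{cor:expmatrixnorm}, plus an evaluation involving only $e^{\mathcal{C}_0^{s,\tau} + \frac{1}{N}\mathcal{C}_1^{s,\tau}}$. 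Because this latter operator is a first-order differential operator on $\mathscr{W}$, its exponential is an algebra homomorphism, so the remaining evaluation factors as a product of three terms, one involving $v_{\varepsilon(k,k)}$ and the other two involving $\iota(R_{s-\tau})$ and $\iota^*(R_{s-\tau})$.

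The key step is to show that each of the two factors containing $R_{s-\tau}$ is $O(1/N)$. Applying Corollary \ref{cor:expmatrixnorm} to trade the $\frac{1}{N}\mathcal{C}_1^{s,\tau}$ for an $O(1/N)$ error and then invoking Lemma \ref{lem:treval2SpN} (together with its $\iota^*$-analogue, whose proof uses $\widetilde{JX}f = -i\widetilde{X}f$ for antiholomorphic $f$ on $\Sp(N,\C)$), these factors reduce, modulo $O(1/N)$, to $\pi_{s-\tau}R_{s-\tau} = \pi_{s-\tau}Q - \pi_{s-\tau}(\pi_{s-\tau}Q) = 0$ by the idempotence of $\pi_{s-\tau}$. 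The third factor being $O(1)$, the product is $O(1/N^2)$, which proves \eqref{eq:tracelimitSpNC}. For \eqref{eq:tracelimitSpN}, setting $\tau = 0$ in the above and restricting integration to $\Sp(N)$ gives the result immediately, since $\mathcal{A}_{s,0}^{\Sp(N,\C)} = s\Delta_{\Sp(N)}$. The main obstacle is simply the bookkeeping around the $\wtr$ normalization and the $2N \times 2N$ realization of $\Sp(N)$; substantively the argument is a line-by-line translation of the $\SO(N,\R)$ case, with the one small gap being the $\iota^*$ analogue of Lemma \ref{lem:treval2SpN}, which is not stated but follows from the same argument as for the $\iota^*$ identity in Lemma \ref{lem:treval2}.
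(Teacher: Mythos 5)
Your proposal is correct and follows exactly the same line-by-line translation of the $\SO(N,\R)$ argument that the paper uses (the paper's own proof is even terser, simply citing Theorem~\ref{thm:tracelimit} and noting the substitutions of Theorem~\ref{thm:ASpstintertwine} for Theorem~\ref{thm:intertwiningII} and Lemma~\ref{lem:treval2SpN} for Lemma~\ref{lem:treval2}). Your observation that Lemma~\ref{lem:treval2SpN} as stated only covers $\iota$ and that its $\iota^*$-analogue is needed (and follows from $\widetilde{JX}f = -i\widetilde{X}f$ for antiholomorphic $f$, as in \eqref{eq:treval2star}) is a legitimate point the paper leaves implicit, and your fix is right.
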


\begin{proof}
	The proof is entirely analogous to the proof of Theorem \ref{thm:tracelimit}. Again, we replace intertwining formula \eqref{eq:intertwine2} for $\ASONst$ with intertwining formula \eqref{eq:intertwine2SpN} for $\ASpst$ wherever necessary. To show the $\Sp(N)$ version of \eqref{eq:treval2app} (where $\wL_0^{s,\tau}$ is replaced with $\mathcal{C}_0^{s,\tau}$), we use Lemma \ref{lem:treval2SpN} in place of Lemma \ref{lem:treval2}. The remainder of the proof is the same. 
\end{proof}

\begin{proof}[Proof of Theorem \ref{thm:SBintertwine} for $\Sp(N)$]
	The proof of the limit results \eqref{eq:freeSBlimit} and \eqref{eq:freeinverseSBlimit} is very similar to the $\SO(N,\R)$ case (see p. \pageref{pf:thm:freeSBlimit}), where we now use the polynomial $S^{(N)}=e^{\frac{\tau}{2}\mathcal{D}_N^{(4)}}f-e^{\frac{\tau}{2}\mathcal{L}_0}f\in C_m[u,u^{-1};\mathbf{v}]$ in place of $R^{(N)}$ and intertwining formula \eqref{eq:intertwine2SpN} in place of \eqref{eq:intertwine2} in \eqref{eq:RNLmunorm}.
	
	To show uniqueness, we define seminorms on $\Cuuv$ by
	\begin{alignat}{2}
	&\|P\|_{s,N}^{(4)}=\|P_N\|_{L^2(\rho_s^{\Sp(N)})},\hspace{1cm}&&\|P\|_{s,\tau,N}^{(4)}=\|P_N\|_{L^2(\mu_{s,\tau}^{\Sp(N,\C)})},\\
	&\|P\|_s^{(4)}=\lim_{N\to\infty}\|P\|_{s,N}^{(4)},\hspace{1cm}&&\|P\|_{s,\tau}^{(4)}=\lim_{N\to\infty}\|P\|_{s,\tau,N}^{(4)}.
	\end{alignat}
	Noting that
	\begin{equation}
	\|P\|_s^{(4)}=\|P\|_{s}^{(2)},
	\end{equation}
	the remainder of the proof proceeds as in the proof of uniqueness in Theorem \ref{thm:SBintertwine} for $\SO(N,\R)$.
\end{proof}

\subsection{Extending the magic formulas and intertwining formulas to $\wSpN$} \label{sec:SpNquaternion}

We conclude this section by showing a version of the magic formulas and intertwining results for $\Sp(N)$ hold for $\wSpN\subseteq\M_N(\HH)$. We have the following key relation: 
\begin{equation} \label{eq:SpNtraceintertwine}
\widetilde{\tr}(\Phi(A))=\Real\tr(A),\hspace{.3cm}A\in\M_N(\HH),
\end{equation}
which can be seen by the definition of $\psi$ (see \eqref{eq:quaternion.matrix.abcd}).

\begin{prop}[Magic formulas for $\wSpN$] \label{prop:magicSpN}
	Let $\beta_{\wsympN}$ be any orthonormal basis for $\wsympN$ with respect to the inner product (\ref{eq:innerprodwSpN}). For any $A,B\in M_N(\HH)$, 
	\begin{align}
	\sum_{X\in\beta_{\wsympN}}X^2&=-\frac{2N+1}{2N}I_N=-I_N-\frac{1}{2N}I_N \label{eq:magic1SpN}\\
	\sum_{X\in\beta_{\wsympN}}XAX&=-\frac{1}{2N}A^*-\Real\tr (A)I_N\label{eq:magic2SpN}\\
	\sum_{X\in\beta_{\wsympN}}\Real\tr(XA)X&=\frac{1}{4N^2}(A^*-A)\label{eq:magic3SpN}\\
	\sum_{X\in\beta_{\wsympN}}\Real\tr(XA)\Real\tr(XB)&=\frac{1}{4N^2}(\Real\tr (A^*B)-\Real\tr(AB))\label{eq:magic4SpN}
	\end{align}
\end{prop}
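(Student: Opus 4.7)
The plan is to derive Proposition \ref{prop:magicSpN} from Proposition \ref{prop:magicSpNv2} by transporting each identity through the Lie algebra isomorphism $\Phi:\wsympN\xrightarrow{\sim}\symp(N)$ established in Section \ref{subsec:SpNrealizations}. Recall that $\Phi$ extends to an injective associative algebra homomorphism on all of $\M_N(\HH)$, sends $I_N$ to $I_{2N}$, preserves adjoints ($\Phi(A^*)=\Phi(A)^*$), and intertwines traces via $\wtr(\Phi(A))=\Real\tr(A)$ by \eqref{eq:SpNtraceintertwine}. Moreover, by \eqref{eq:PhiTrintertwine} it is an isometry of real inner product spaces, so $\Phi(\beta_{\wsympN})$ is an orthonormal basis of $\symp(N)$; since the four sums on the left-hand sides of \eqref{eq:magicSpN1v2}--\eqref{eq:magicSpN4v2} are independent of the chosen orthonormal basis, we are free to evaluate them against $\Phi(\beta_{\wsympN})$.

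The one new ingredient required is the purely algebraic identity
\begin{equation}
\Omega\,\Phi(A)^{\intercal}\,\Omega^{-1}=\Phi(A^*)\quad\text{for all }A\in\M_N(\HH). \label{eq:Omega.transpose.ident}
\end{equation}
I would prove this by a $2\times 2$ block computation: it reduces to checking $\Omega_0\,\psi(q)^{\intercal}\,\Omega_0^{-1}=\psi(q^*)$ for each $q\in\HH$, which is a routine verification on the generators $\idH,\ii,\jj,\kk$ using the explicit form \eqref{eq:quaternion.matrix.abcd} of $\psi$. With \eqref{eq:Omega.transpose.ident} established, each magic formula for $\wSpN$ is obtained by applying $\Phi$ to both sides and invoking the corresponding formula for $\Sp(N)$. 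For example, for \eqref{eq:magic2SpN}, applying $\Phi$ to the left side gives
\begin{equation*}
\Phi\!\left(\sum_{X\in\beta_{\wsympN}}X A X\right)=\sum_{Y\in\Phi(\beta_{\wsympN})}Y\,\Phi(A)\,Y\stackrel{\eqref{eq:magicSpN2v2}}{=}-\tfrac{1}{2N}\Omega\,\Phi(A)^{\intercal}\,\Omega^{-1}-\wtr(\Phi(A))\,I_{2N},
\end{equation*}
and substituting \eqref{eq:Omega.transpose.ident} together with $\wtr\circ\Phi=\Real\tr$ and $\Phi(I_N)=I_{2N}$ shows this equals $\Phi\bigl(-\tfrac{1}{2N}A^*-\Real\tr(A)\,I_N\bigr)$. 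Injectivity of $\Phi$ then yields \eqref{eq:magic2SpN}; specializing $A=I_N$ recovers \eqref{eq:magic1SpN}.

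Formulas \eqref{eq:magic3SpN} and \eqref{eq:magic4SpN} follow by the same transport: the scalar factors $\Real\tr(XA)$ and $\Real\tr(XB)$ match $\wtr(\Phi(X)\Phi(A))$ and $\wtr(\Phi(X)\Phi(B))$ under $\Phi$, so after applying $\Phi$ term-by-term one reads off the right-hand sides of \eqref{eq:magicSpN3v2} and \eqref{eq:magicSpN4v2}, and then uses \eqref{eq:Omega.transpose.ident} to rewrite $\Omega\Phi(A)^{\intercal}\Omega^{-1}=\Phi(A^*)$ before applying $\Phi^{-1}$. The only real obstacle is sign bookkeeping in the $\Omega$-identity: since $\Omega^{-1}=-\Omega$, one must be careful when comparing expressions of the form $\Omega A^{\intercal}\Omega$ with $\Omega A^{\intercal}\Omega^{-1}$ that appear across the two propositions. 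Once \eqref{eq:Omega.transpose.ident} is in hand, no further matrix calculation is needed.
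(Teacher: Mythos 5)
Your proposal is correct and follows essentially the same route as the paper's proof: establish the key identity $\Omega\,\Phi(A)^{\intercal}\Omega^{-1}=\Phi(A^*)$ by reducing to the $2\times 2$ block identity $\Omega_0\psi(q)^{\intercal}\Omega_0^{-1}=\psi(q^*)$, use that $\Phi$ is an isometric isomorphism so $\Phi(\beta_{\wsympN})$ is an orthonormal basis of $\symp(N)$, and transport each $\Sp(N)$ magic formula back through $\Phi$ by injectivity. The only cosmetic difference is that the paper obtains \eqref{eq:magic4SpN} from \eqref{eq:magic3SpN} by multiplying by $B$ and taking $\wtr$ rather than by direct transport of \eqref{eq:magicSpN4v2}, but the two routes are equivalent; your remark about the $\Omega$ versus $\Omega^{-1}$ sign bookkeeping correctly flags the one place where care is needed.
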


\begin{proof}
	We begin with the observation that for $A\in\M_N(\HH)$, 
	\begin{equation}
	\Omega\Phi(A)^\intercal\Omega^{-1}=\Phi(A^*). \label{eq:PhiAT}
	\end{equation}
	To see why this is the case, note that Lemma \ref{lem:complexquaternion} directly implies that for $q\in\HH$, 
	\begin{equation}
	\Omega_0\psi(q)^\intercal\Omega_0^{-1}=\psi(q^*). \nonumber
	\end{equation}
	Hence if $A=[q_{i,j}]$, 
	\begin{align}
	[\Omega\Phi(A)^\intercal\Omega^{-1}]^{i,j}&=\sum_{1\leq k,\ell\leq N}[\Omega]^{i,k}[\Phi(A)^\intercal]^{k,\ell}[\Omega]^{\ell,j}=[\Omega]^{i,i}[\Phi(A)^\intercal]^{i,j}[\Omega]^{j,j} \nonumber\\
	&=\Omega_0\psi(q_{j,i})^\intercal\Omega^{-1}_0=\psi(q_{j,i}^*)=[\Phi(A^*)]^{i,j}, \nonumber
	\end{align}
	which shows \eqref{eq:PhiAT}.
	
	Recall that if $\beta_{\wsympN}$ is an orthonormal basis for $\wsympN$, $\Phi(\beta_{\wsympN})$ is an orthonormal basis for $\symp(N)$. Let $A\in\M_N(\HH)$. Using  \eqref{eq:magicSpN2v2} and \eqref{eq:SpNtraceintertwine}, we have 
	\begin{align}
	\sum_{X\in\beta_{\wsympN}}\Phi(XAX)&=\sum_{X\in\beta_{\wsympN}}\Phi(X)\Phi(A)\Phi(X) \nonumber\\
	&=-\frac{1}{2N}\Omega\Phi(A)^\intercal\Omega^{-1}-\wtr (\Phi(A))I_{2N}\nonumber\\
	&=-\frac{1}{2N}\Phi(A^*)-\Real\tr(A)\Phi(I_N)\nonumber\\
	&=\Phi\left(-\frac{1}{2N}A^*-\Real\tr (A)I_N\right)\nonumber.
	\end{align}
	Since $\Phi$ is injective, \eqref{eq:magic2SpN} follows. The proof of \eqref{eq:magic3SpN} is similar, and \eqref{eq:magic1SpN} and \eqref{eq:magic4SpN} follow from \eqref{eq:magic2SpN} and \eqref{eq:magic3SpN}, resp. 
\end{proof}

An $\wSpN$ version of Theorem \ref{thm:intertwine1} also holds for an appropriately defined trace polynomial functional calculus: for $P\in\Cuuv$, we let $\widetilde{P}_N:\wSpN\to\M_N(\HH)$ be the function defined by
\begin{equation}
\widetilde{P}_N(A):=P(u;\mathbf{v})|_{u=A,v_k=\Real\tr (A^k),k\neq 0}.
\end{equation}

\begin{prop} \label{prop:SpNtraceintertwine}
	Let $P\in\Cuuv$ and $A\in\wSpN\subseteq\M_N(\HH)$. Then
	\begin{equation}
	P_N(\Phi(A))=\Phi(\widetilde{P}_N(A)).
	\end{equation}
\end{prop}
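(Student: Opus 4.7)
The plan is to reduce to monomials by linearity and then verify the identity using three structural properties of $\Phi$ established earlier in the excerpt: $\Phi$ is a unital $\R$-algebra homomorphism, it preserves adjoints, and it satisfies the trace relation \eqref{eq:SpNtraceintertwine}. Both sides of the claimed equality depend $\C$-linearly on $P$, so it suffices to check it on monomials of the form $P(u;\mathbf{v}) = u^{k_0} v_{k_1} \cdots v_{k_m}$ with $k_0 \in \Z$ and $k_1,\ldots,k_m \in \Z\setminus\{0\}$.

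For such a monomial, unwinding the two functional calculi gives
\begin{align*}
P_N(\Phi(A)) &= \wtr(\Phi(A)^{k_1}) \cdots \wtr(\Phi(A)^{k_m}) \, \Phi(A)^{k_0}, \\
\widetilde{P}_N(A) &= \Real\tr(A^{k_1}) \cdots \Real\tr(A^{k_m}) \, A^{k_0}.
\end{align*}
First I would verify that $\Phi(A)^k = \Phi(A^k)$ for all $k \in \Z$. For $k \geq 0$ this is immediate from $\Phi$ being a unital algebra homomorphism. For $k < 0$, since $A \in \wSpN$ satisfies $A^* A = I_N$, we have $A^{-1} = A^*$, and $\Phi(A) \in \Sp(N)$ is unitary, so $\Phi(A)^{-1} = \Phi(A)^* = \Phi(A^*) = \Phi(A^{-1})$; iterating yields the result for all negative powers.

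Second, applying \eqref{eq:SpNtraceintertwine} gives
\[
\wtr(\Phi(A)^{k_i}) = \wtr(\Phi(A^{k_i})) = \Real\tr(A^{k_i}),
\]
so the scalar coefficients on the two sides coincide. Since these coefficients are real, and $\Phi$ is $\R$-linear and multiplicative, we can pull them through $\Phi$:
\[
P_N(\Phi(A)) = \Real\tr(A^{k_1}) \cdots \Real\tr(A^{k_m}) \, \Phi(A^{k_0}) = \Phi\!\left(\Real\tr(A^{k_1}) \cdots \Real\tr(A^{k_m}) \, A^{k_0}\right) = \Phi(\widetilde{P}_N(A)).
\]
Linearity in $P$ then gives the conclusion for all $P \in \Cuuv$.

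There is no substantive obstacle here; the argument is essentially bookkeeping, and the only mild subtlety is the handling of negative powers, which uses the defining relation $A^{-1} = A^*$ for $A \in \wSpN$ together with the adjoint-compatibility of $\Phi$. All the required structural properties of $\Phi$ are already recorded in Section \ref{subsec:SpNrealizations}, so the proof amounts to assembling them.
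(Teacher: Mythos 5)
Your proof is correct and takes essentially the same route as the paper's: reduce to monomials, use \eqref{eq:SpNtraceintertwine} to match the scalar trace factors, and pull them through $\Phi$ by $\R$-linearity and multiplicativity. You are somewhat more careful than the paper (which reduces only to $u^mv_k^n$ and passes over the negative-power step), but the content and structure are identical.
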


\begin{proof}
	It suffices to prove the result for $P(u;\mathbf{v})=u^mv_k^n$, with $m,n,k\in\Z$ and $k\neq 0$. Using \eqref{eq:SpNtraceintertwine}, we have
	\begin{align*}
	P_N(\Phi(A))&=\Phi(A)^m\widetilde{\tr}(\Phi(A)^k)^n=\Phi(A^m)(\Real\tr(A^k))^n=\Phi(A^m(\Real\tr(A^k))^n)=\Phi(\widetilde{P}_N(A)).\qedhere
	\end{align*}
\end{proof}

Using this proposition, we see that the intertwining formula for $\Sp(N)\subseteq\M_N(\HH)$ is a direct consequence of the intertwining formula \eqref{eq:intertwine1} for $\Sp(N)\subseteq\M_{2N}(\C)$.

\begin{theorem}[Intertwining formulas] \label{thm:intertwine1SpNv2}
	For any $P\in \Cuuv$, 
	\begin{equation}
	\Delta_{\wSpN} \widetilde{P}_N=\left[\mathcal{D}_N^{(4)}P\right]_N. \label{eq:intertwine1SpNv2}
	\end{equation}
	Moreover, for all $\tau\in\C$, 
	\begin{equation}
	e^{\frac{\tau}{2}\Delta_{\wSpN}}\widetilde{P}_N=[e^{\frac{\tau}{2}\mathcal{D}_N^{(4)}}P]_N.\label{eq:intertwine1SpNexpv2}
	\end{equation}
\end{theorem}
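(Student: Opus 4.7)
The plan is to transfer the intertwining formula from the complex realization $\Sp(N)\subseteq \M_{2N}(\C)$, already established in Theorem \ref{thm:intertwine1} for $\beta=4$, back to the quaternionic realization $\wSpN\subseteq \M_N(\HH)$ by exploiting the isomorphism $\Phi$ constructed in Section \ref{subsec:SpNrealizations}. The essential input is that $\Phi$ is not merely a Lie group isomorphism, but an \emph{isometry} of bi-invariant metrics. Indeed, equation \eqref{eq:PhiTrintertwine} states $\inpro{X,Y}_{\wsympN}=\inpro{\Phi(X),\Phi(Y)}_{\symp(N)}$, so $\Phi$ maps any orthonormal basis $\beta_{\wsympN}$ of $\wsympN$ bijectively to an orthonormal basis $\Phi(\beta_{\wsympN})$ of $\symp(N)$.

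The first step is to verify that the two Laplacians are $\Phi$-intertwined, i.e.\ for any smooth $\M_{2N}(\C)$-valued function $f$ on $\Sp(N)$,
\begin{equation}
\Delta_{\wSpN}(f\circ\Phi)=(\Delta_{\Sp(N)}f)\circ\Phi. \nonumber
\end{equation}
This is a direct chain-rule calculation: for $X\in\wsympN$ and $A\in\wSpN$, since $\Phi$ is a smooth group homomorphism, $\Phi(Ae^{tX})=\Phi(A)e^{t\Phi(X)}$, so $\widetilde{X}(f\circ\Phi)(A)=(\widetilde{\Phi(X)}f)(\Phi(A))$, and iterating gives $\widetilde{X}^2(f\circ\Phi)=(\widetilde{\Phi(X)}^2f)\circ\Phi$. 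Summing over $X\in\beta_{\wsympN}$ and using that $\Phi(\beta_{\wsympN})$ is orthonormal for $\symp(N)$ yields the intertwining. Because $\Phi:\M_N(\HH)\to\M_{2N}(\C)$ is $\R$-linear, the same identity holds componentwise for matrix-valued $f$.

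Combining this with Proposition \ref{prop:SpNtraceintertwine}, which states $P_N\circ\Phi=\Phi\circ\widetilde{P}_N$, I would compute
\begin{equation}
\Phi\circ(\Delta_{\wSpN}\widetilde{P}_N)=\Delta_{\wSpN}(\Phi\circ\widetilde{P}_N)=\Delta_{\wSpN}(P_N\circ\Phi)=(\Delta_{\Sp(N)}P_N)\circ\Phi. \nonumber
\end{equation}
Now apply Theorem \ref{thm:intertwine1} for $\Sp(N)\subseteq\M_{2N}(\C)$ to the last expression, obtaining $[\mathcal{D}_N^{(4)}P]_N\circ\Phi$. A second application of Proposition \ref{prop:SpNtraceintertwine} (to the polynomial $\mathcal{D}_N^{(4)}P$) rewrites this as $\Phi\circ\widetilde{[\mathcal{D}_N^{(4)}P]}_N$. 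Since $\Phi$ is injective on $\M_N(\HH)$, this forces \eqref{eq:intertwine1SpNv2}, with the right-hand side understood via the quaternionic trace polynomial calculus (which agrees with the complex one under $\Phi$ by virtue of \eqref{eq:SpNtraceintertwine}).

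For the exponential statement \eqref{eq:intertwine1SpNexpv2}, I would iterate \eqref{eq:intertwine1SpNv2} to obtain $(\Delta_{\wSpN})^n\widetilde{P}_N=\widetilde{[(\mathcal{D}_N^{(4)})^nP]}_N$ for every $n\geq 0$, then sum the defining power series, justifying termwise differentiation by Corollary \ref{cor:DNinvariant} (which shows $\mathcal{D}_N^{(4)}$ preserves the finite-dimensional subspace $\C_m[u,u^{-1};\mathbf{v}]$ containing $P$, so the series terminates in effect on a finite-dimensional invariant subspace of trace polynomials). The main subtlety — and the only step requiring care — is tracking that all operations in the first step are $\R$-linear and commute with the componentwise action of $\Phi$; once this is checked, the rest of the argument is essentially transport of structure.
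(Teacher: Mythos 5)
Your proposal is correct and follows the same route the paper intends. The paper itself offers only the one-sentence remark that the result ``is a direct consequence'' of Proposition \ref{prop:SpNtraceintertwine} and the intertwining formula \eqref{eq:intertwine1} for $\Sp(N)\subseteq\M_{2N}(\C)$; your argument simply spells out that implication — the isometry $\Phi$ sends orthonormal bases of $\wsympN$ to orthonormal bases of $\symp(N)$ via \eqref{eq:PhiTrintertwine}, hence intertwines the Laplacians, and the two applications of Proposition \ref{prop:SpNtraceintertwine} together with injectivity of $\Phi$ give \eqref{eq:intertwine1SpNv2}, with \eqref{eq:intertwine1SpNexpv2} following by iterating and invoking Corollary \ref{cor:DNinvariant} exactly as in the complex case.
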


\begin{remark}
	We have seen that the magic formulas \eqref{prop:magicSpN} for $\wSpN$ can be derived from the magic formulas \eqref{prop:magicSpNv2} for $\Sp(N)$. However, the converse is not true. Suppose we only know that the magic formula \eqref{eq:magic2SpN} holds. Applying $\Phi$ to both sides yields
	\begin{equation}
	\sum_{X\in\beta_{\wsympN}}\Phi(X)\Phi(A)\Phi(X)=-\frac{1}{2N}\Phi(A)^*-\wtr (\Phi(A))I_{2N}.
	\end{equation}
	We cannot replace $\Phi(A)$ with an arbitrary $B\in\M_{2N}(\C)$ in the formula above. For example, consider $$B=\left[
	\begin{array}{cc}
	1 & 0 \\
	1 & 1 \\
	\end{array}
	\right]\in\Sp(1,\C).$$
	Then we can compute that
	\begin{align*}
	\sum_{Y\in\beta_{\symp(1)}}YBY&=\frac{1}{2}\left[
	\begin{array}{rr}
	-3 & 0 \\
	1 & -3 \\
	\end{array}
	\right]\neq \frac{1}{2}\left[
	\begin{array}{rr}
	-3 & -1 \\
	0 & -3 \\
	\end{array}
	\right]=-\frac{1}{2}B^*-\wtr (B)I_2.
	\end{align*}
	This is another reason why it is more convenient to work with $\Sp(N)$ rather than $\wSpN$. 
\end{remark}

\section{The Segal-Bargmann transform on $\SU(N)$}

In this final section, we analyze the Segal-Bargmann transform on the special unitary group $\SU(N)$. This case uses many of the techniques from the $\U(N)$ case, studied in \cite{DHK:U(N)}. Consequently, we outline the main ideas required for the proofs of these results and do not provide the full details. 

We first prove the intertwining formulas for $\SU(N)$. As in the $\SO(N,\R)$ and $\Sp(N)$ cases, the basis of these results is a set of magic formulas. We recall the following set of magic formulas for $\U(N)$, proven in \cite{DHK:U(N)}.

\begin{prop}[{\cite[Proposition 3.1]{DHK:U(N)}}]
	Let $\beta_{\uu(N)}$ be any orthonormal basis for $\uu(N)$ with respect to the inner product defined by $\inpro{X,Y}_{\uu(N)}=N^2\tr(XY^*)$. For any $A,B\in\MNC$,
	\begin{align}
	\sum_{X\in\beta_{\uu(N)}}X^2&=-I_N\label{eq:magic1UN}\\
	\sum_{X\in\beta_{\uu(N)}}XAX&=-\tr(A)I_N\label{eq:magic2UN}\\
	\sum_{X\in\beta_{\uu(N)}}\tr (XA)X&=-\frac{1}{N^2}A\label{eq:magic3UN}\\
	\sum_{X\in\beta_{\uu(N)}}\tr(XA)\tr(XB)&=-\frac{1}{N^2}\tr(AB)\label{eq:magic4UN}
	\end{align}
\end{prop}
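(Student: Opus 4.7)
The plan is to prove these $\U(N)$ magic formulas in direct analogy with the proof of Proposition \ref{prop:magic}. First I would observe that each left-hand side has the form $\sum_{X \in \beta} B(X,X)$ for an $\R$-bilinear map $B$ on $\uu(N) \times \uu(N)$, and is therefore independent of the choice of orthonormal basis $\beta$; this is the polarization argument used in \cite[Theorem 3.3]{DHK:U(N)} and invoked in the proof of Proposition \ref{prop:magic}. It then suffices to compute with one convenient basis. I would take
\begin{equation*}
\beta_{\uu(N)} = \left\{\tfrac{i}{\sqrt{N}} E_{j,j}\right\}_{1 \le j \le N} \cup \left\{\tfrac{1}{\sqrt{2N}}(E_{j,k} - E_{k,j})\right\}_{j < k} \cup \left\{\tfrac{i}{\sqrt{2N}}(E_{j,k} + E_{k,j})\right\}_{j < k},
\end{equation*}
which is easily seen to be orthonormal with respect to $\inpro{X,Y}_{\uu(N)} = N\Tr(XY^*)$ via Lemma \ref{lem:elementary}.

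The heart of the proof is \eqref{eq:magic2UN}. I would split the sum into the contributions of the three families above and compute each via the elementary identities \eqref{lem:elem2}--\eqref{lem:elem3}. The contribution of the ``real antisymmetric'' family $\{(E_{j,k}-E_{k,j})\}$ is already computed in the proof of Proposition \ref{prop:magic} and equals $\tfrac{1}{2N}(A^\intercal - \Tr(A) I_N)$. The key observation is that the ``imaginary symmetric'' family $\{i(E_{j,k}+E_{k,j})\}$ produces, after expansion, the same $A^\intercal$ piece but with the opposite sign (because of the $i^2 = -1$) together with symmetric off-diagonal and diagonal corrections; and the diagonal family $\{iE_{j,j}/\sqrt{N}\}$ provides precisely the diagonal term needed to cancel the remaining corrections. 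Adding all three together, every term involving the entries of $A$ outside of $\Tr(A)$ cancels, leaving $-\tr(A) I_N$. Identity \eqref{eq:magic1UN} follows by setting $A = I_N$. Identity \eqref{eq:magic3UN} is proved by an entirely analogous three-family split, using \eqref{lem:elem4} to evaluate $\tr(XA)$; the same cancellation pattern produces $-\tfrac{1}{N^2}A$. Finally, \eqref{eq:magic4UN} is an immediate corollary of \eqref{eq:magic3UN} by right-multiplying by $B$ and taking $\tr$.

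The main obstacle is simply bookkeeping: one must carefully track factors of $i$ and $i^2$ arising from the imaginary basis elements and identify the cancellations between the three families. This is the essential structural difference from the $\SO(N,\R)$ case, where the single off-diagonal family $(E_{j,k} - E_{k,j})$ leaves an uncancelled $\tfrac{1}{N}A^\intercal$ residue in \eqref{eq:magic2}, whereas in the $\U(N)$ case the second off-diagonal family is present precisely to remove that residue. An alternative approach that avoids the direct computation entirely is L\'evy's Casimir-element decomposition referenced in the remark after Proposition \ref{prop:magic}: for $\KK = \C$ and $\beta = 2'$ one has $C_{\uu(N)} = \tfrac{1}{2N}\bigl(-T\otimes \Real^\C + P\otimes \Co^\C\bigr)$, and evaluating this tensor against the $\R$-bilinear maps $(X,Y) \mapsto XAY$, $\tr(XA)Y$, and $\tr(XA)\tr(YB)$ yields all four identities simultaneously, with the global sign $-1$ appearing from $\Real^\C$ giving $\idH\otimes\idH-\ii\otimes\ii$.
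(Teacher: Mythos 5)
The paper does not actually prove this proposition—it is imported verbatim from \cite[Proposition 3.1]{DHK:U(N)}—so there is no in-text proof to compare against. Your proof is correct: I checked the orthonormality of your basis, the three-family expansion and cancellation for \eqref{eq:magic2UN} (the antisymmetric family gives $\tfrac{1}{2N}(A^\intercal - \Tr(A)I_N)$, the imaginary-symmetric family gives $-\tfrac{1}{2N}(A^\intercal + \Tr(A)I_N - 2D(A))$ where $D(A)$ is the diagonal part, and the diagonal family gives $-\tfrac{1}{N}D(A)$, summing to $-\tr(A)I_N$), and the analogous computation for \eqref{eq:magic3UN}; the specializations to \eqref{eq:magic1UN} and \eqref{eq:magic4UN} follow as you say. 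Your argument is the natural $\U(N)$ analogue of the paper's own proof of Proposition \ref{prop:magic} for $\SO(N,\R)$, and your closing remark on L\'evy's Casimir decomposition matches the paper's remark following that proposition.
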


The magic formulas for $\SU(N)$ follow easily from the magic formulas for $\U(N)$. 

\begin{prop}[Magic formulas for $\SU(N)$] \label{prop:magicSUN}
	Let $\beta_{\su(N)}$ be any orthonormal basis for $\su(N)$ with respect to the inner product \eqref{eq:inproSUN}. For any $A,B\in\MNC$,
	\begin{align}
	\sum_{X\in\beta_{\su(N)}}X^2&=\left(-1+\frac{1}{N^2}\right)I_N\label{eq:magic1SUN}\\
	\sum_{X\in\beta_{\su(N)}}XAX&=-\tr(A)I_N+\frac{1}{N^2}A\label{eq:magic2SUN}\\
	\sum_{X\in\beta_{\su(N)}}\tr (XA)X&=-\frac{1}{N^2}A+\frac{1}{N^2}\tr(A)I_N\label{eq:magic3SUN}\\
	\sum_{X\in\beta_{\su(N)}}\tr(XA)\tr(XB)&=-\frac{1}{N^2}\tr(AB)+\frac{1}{N^2}\tr(A)\tr(B)\label{eq:magic4SUN}
	\end{align}
\end{prop}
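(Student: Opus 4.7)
The plan is to derive the $\SU(N)$ magic formulas from the already-stated $\U(N)$ magic formulas \eqref{eq:magic1UN}--\eqref{eq:magic4UN} by exploiting the orthogonal decomposition
\begin{equation*}
\uu(N) = \su(N) \oplus \R\cdot iI_N.
\end{equation*}
This decomposition is orthogonal with respect to the common inner product $\inpro{X,Y} = N\Tr(XY^*) = N^2\tr(XY^*)$, since for $X\in\su(N)$ and $Y=i\alpha I_N$ with $\alpha\in\R$, one has $\inpro{X,Y} = -i\alpha N\Tr(X) = 0$. With this normalization the vector $iI_N$ has squared norm $N\Tr(I_N)=N^2$, so $\hat e := \tfrac{1}{N}iI_N$ is a unit vector spanning the one-dimensional center of $\uu(N)$. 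Consequently, if $\beta_{\su(N)}$ is any orthonormal basis for $\su(N)$, then $\beta_{\uu(N)} := \beta_{\su(N)}\cup\{\hat e\}$ is an orthonormal basis for $\uu(N)$.

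Since each of the left-hand sides in \eqref{eq:magic1UN}--\eqref{eq:magic4UN} is $\R$-bilinear (or $\R$-multilinear) in the basis vector $X$, it splits as a sum of the $\su(N)$ contribution and the $\hat e$ contribution. Solving for the $\su(N)$ sum then yields the desired identities. Explicitly:
\begin{itemize}
\item For \eqref{eq:magic1SUN}, use $\hat e^{\,2} = -\tfrac{1}{N^2}I_N$ and \eqref{eq:magic1UN}.
\item For \eqref{eq:magic2SUN}, use $\hat e A\hat e = \tfrac{iI_N}{N}A\tfrac{iI_N}{N} = -\tfrac{1}{N^2}A$ together with \eqref{eq:magic2UN}.
\item For \eqref{eq:magic3SUN}, use $\tr(\hat e A)\hat e = \tfrac{i}{N}\tr(A)\cdot\tfrac{iI_N}{N} = -\tfrac{1}{N^2}\tr(A)I_N$ together with \eqref{eq:magic3UN}.
\item For \eqref{eq:magic4SUN}, use $\tr(\hat e A)\tr(\hat e B) = -\tfrac{1}{N^2}\tr(A)\tr(B)$ together with \eqref{eq:magic4UN}.
\end{itemize}

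As with the $\SO(N,\R)$ and $\Sp(N)$ cases, the left-hand sides in the proposition are independent of the choice of orthonormal basis (they correspond to invariants of the Casimir element, as noted in the remark following Proposition \ref{prop:magic}), so it is legitimate to carry out the computation in the particular basis $\beta_{\uu(N)} = \beta_{\su(N)}\cup\{\hat e\}$. There is no essential obstacle here: the only point requiring care is correctly tracking the factors of $i$ and $N$ in the center contribution $\hat e$, and once this is done each of the four identities follows by a one-line subtraction from its $\uu(N)$ counterpart.
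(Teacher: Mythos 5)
Your proof is correct and takes essentially the same approach as the paper: extend $\beta_{\su(N)}$ to an orthonormal basis of $\uu(N)$ by appending $\tfrac{i}{N}I_N$, and subtract the central contribution from the known $\U(N)$ magic formulas. Your computations of the center terms $\hat e^2$, $\hat e A\hat e$, $\tr(\hat e A)\hat e$, and $\tr(\hat e A)\tr(\hat e B)$ are all correct and match what the paper leaves implicit.
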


\begin{proof}
	The expressions on the left side of \eqref{eq:magic1SUN}, \eqref{eq:magic2SUN}, \eqref{eq:magic3SUN}, and \eqref{eq:magic4SUN} are independent of orthonormal basis chosen. Fix an orthonormal basis $\beta_{\su(N)}$ for $\su(N)$. Observe that the matrix $\frac{i}{N}I_N$ is an element of $\uu(N)$ of unit norm such that
	\begin{equation*}
	\inpro{\frac{i}{N}I_N,X}_{\uu(N)}=0\hspace{.5cm}\mbox{for all }X\in\su(N)\subseteq\uu(N).
	\end{equation*}
	Hence $\beta_{\uu(N)}=\beta_{\su(N)}\cup\{\frac{i}{N}I_N\}$ is an orthonormal basis for $\uu(N)$. Thus
	\begin{align*}
	\sum_{X\in\beta_{\su(N)}}X^2&=\sum_{X\in\beta_{\uu(N)}}X^2-\left(\frac{i}{N}I_N\right)^2=\left(-1+\frac{1}{N^2}\right)I_N,
	\end{align*}
	which proves \eqref{eq:magic1SUN}. The remaining magic formulas are proven similarly. 
\end{proof}

\begin{prop}[Derivative formulas for $\SU(N)$] \label{prop:derivativeSUN}
	The following identities hold on $\SU(N)$ and $\SL(N,\C)$:
	\begin{gather}
	\Delta_{\SU(N)}A^m=-2\Ind_{m\geq 2}\sum_{j=1}^{m-1}jA^j\tr(A^{m-j})-mA^m+\frac{m^2}{N^2}A^m,\hspace{.5cm}m\geq 0\label{eq:DAmposSUN}\\
	\Delta_{\SU(N)}A^m=2\Ind_{m\leq -2}\sum_{j=m+1}^{-1}jA^j\tr(A^{m-j})+mA^m+\frac{m^2}{N^2}A^m,\hspace{.5cm}m<0\label{eq:DAmnegSUN}\\
	\sum_{X\in\beta_{\su(N)}}\wX\tr(A^m)\cdot\wX A^p=\frac{mp}{N^2}(-A^{m+p}+A^p\tr(A^m)),\hspace{.5cm}m,p\in\Z.\label{eq:XtrAmXApSUN}
	\end{gather}		
\end{prop}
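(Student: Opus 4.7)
The proof should closely parallel the $\SO(N,\R)$ derivation in the analogue of Proposition 3.3 on p.~\pageref{pf:thm:intertwine1}, substituting the $\SU(N)$ magic formulas (\ref{eq:magic1SUN})--(\ref{eq:magic4SUN}) in place of those for $\SO(N,\R)$. The algebraic identities (\ref{eq:XAmpos}), (\ref{eq:XAmneg}), (\ref{eq:XtrAm}), and (\ref{eq:XXAm}) are purely formal consequences of the product rule on a matrix Lie group and hold verbatim on $\SU(N)$ and $\SL(N,\C)$; these supply the same scaffolding used in the $\SO(N,\R)$ case.

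For \eqref{eq:DAmposSUN} with $m\geq 0$, I would sum the identity $\wX^2A^m = 2\Ind_{m\geq 2}\sum_{j+k+\ell = m,\, j,k\neq 0} A^jXA^kXA^\ell + \sum_{j=1}^m A^jX^2A^{m-j}$ over $X\in\beta_{\su(N)}$ and substitute the magic formulas. The $\sum_XX^2 = (-1+\tfrac{1}{N^2})I_N$ contribution yields $-mA^m + \tfrac{m}{N^2}A^m$, while $\sum_XXA^kX = -\tr(A^k)I_N + \tfrac{1}{N^2}A^k$ gives two pieces: the classical part $-2\Ind_{m\geq 2}\sum_{k=1}^{m-1}(m-k)\tr(A^k)A^{m-k}$, which after the reindexing $k\mapsto m-j$ becomes $-2\Ind_{m\geq 2}\sum_{j=1}^{m-1}jA^j\tr(A^{m-j})$; and the correction $\tfrac{2}{N^2}\Ind_{m\geq 2}A^m\cdot\#\{(j,k,\ell):j,k\geq 1,\ell\geq 0, j+k+\ell=m\} = \tfrac{m(m-1)}{N^2}\Ind_{m\geq 2}A^m$. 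The combinatorial identity $\tfrac{m}{N^2} + \tfrac{m(m-1)}{N^2}\Ind_{m\geq 2} = \tfrac{m^2}{N^2}$, which holds for all $m\geq 0$ including the boundary values $m\in\{0,1\}$, then delivers \eqref{eq:DAmposSUN}. The case $m<0$ is entirely parallel using \eqref{eq:XAmneg}. As a sanity check, one may alternatively observe that $\beta_{\uu(N)} = \beta_{\su(N)}\cup\{\tfrac{i}{N}I_N\}$ is orthonormal with respect to $\inpro{\cdot,\cdot}_{\uu(N)}=N^2\tr(\cdot\,(\cdot)^*)$, so $\Delta_{\SU(N)} = \Delta_{\U(N)} - \widetilde{(\tfrac{i}{N}I_N)}^2$; since $e^{t\cdot iI_N/N}=e^{it/N}I_N$ is central, $\widetilde{(\tfrac{i}{N}I_N)}A^m = \tfrac{im}{N}A^m$ and so $\widetilde{(\tfrac{i}{N}I_N)}^2A^m = -\tfrac{m^2}{N^2}A^m$, immediately giving $\Delta_{\SU(N)}A^m = \Delta_{\U(N)}A^m + \tfrac{m^2}{N^2}A^m$.

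For \eqref{eq:XtrAmXApSUN}, the argument is direct: I would apply \eqref{eq:XtrAm} to write $\wX\tr(A^m) = m\tr(XA^m)$ and use \eqref{eq:XAmpos}/\eqref{eq:XAmneg} for $\wX A^p$, then factor $\sum_X\tr(XA^m)X = -\tfrac{1}{N^2}A^m + \tfrac{1}{N^2}\tr(A^m)I_N$ out of the middle via \eqref{eq:magic3SUN}. For $m\in\Z$ and $p\geq 0$ this produces $m\sum_{k=1}^p A^k\bigl[-\tfrac{1}{N^2}A^m+\tfrac{1}{N^2}\tr(A^m)I_N\bigr]A^{p-k} = \tfrac{mp}{N^2}(-A^{m+p}+A^p\tr(A^m))$, and for $p<0$ the minus sign in \eqref{eq:XAmneg} is absorbed by the factor $-p$ counting the terms in $\sum_{k=p+1}^0$, yielding the same expression in every case.

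The main bookkeeping obstacle is verifying the $\tfrac{1}{N^2}$-order cancellations: the $\su(N)$ magic formulas differ from those for $\uu(N)$ by exactly the correction terms $\tfrac{1}{N^2}A^j$ and $\tfrac{1}{N^2}I_N$, and one must check that these combine across the $\sum_X X^2$ and $\sum_X XA^kX$ contributions to give the single clean term $\tfrac{m^2}{N^2}A^m$ for every $m\in\Z$, including the $m=0,\pm 1$ cases where the indicator $\Ind_{|m|\geq 2}$ vanishes. Everything else is a translation of the computations on p.~\pageref{pf:thm:intertwine1}.
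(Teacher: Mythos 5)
Your main line of attack is exactly what the paper does (by deferring to \cite[Theorem 3.3]{DHK:U(N)}): rerun the $\U(N)$ computation with the $\su(N)$ magic formulas of Proposition~\ref{prop:magicSUN} substituted in. The bookkeeping is correct — in particular the count of triples $(j,k,\ell)$ with $j,k\geq 1$, $\ell\geq 0$, $j+k+\ell=m$ is $\binom{m}{2}$, so the factor of $2$ from \eqref{eq:XXAm} produces the correction $\tfrac{m(m-1)}{N^2}A^m$, and $\tfrac{m}{N^2}+\tfrac{m(m-1)}{N^2}\Ind_{m\geq 2}=\tfrac{m^2}{N^2}$ does hold for all $m\geq 0$; your sign-tracking via \eqref{eq:XAmneg} for the $m<0$ and $p<0$ cases is also right.

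The ``sanity check'' you append is actually the cleaner derivation and could profitably be the main one. Since $\beta_{\uu(N)}=\beta_{\su(N)}\cup\{\tfrac{i}{N}I_N\}$ (exactly the decomposition the paper itself uses to prove Proposition~\ref{prop:magicSUN}) and $e^{tiI_N/N}$ is a central scalar, the vector field $\widetilde{Z}$ with $Z=\tfrac{i}{N}I_N$ acts on both $A^m$ and $\tr(A^m)$ as multiplication by $\tfrac{im}{N}$. Hence $\Delta_{\SU(N)}=\Delta_{\U(N)}-\widetilde{Z}^2$ gives $\Delta_{\SU(N)}A^m=\Delta_{\U(N)}A^m+\tfrac{m^2}{N^2}A^m$ at once, and likewise
\[
\sum_{X\in\beta_{\su(N)}}\wX\tr(A^m)\cdot\wX A^p
=\sum_{X\in\beta_{\uu(N)}}\wX\tr(A^m)\cdot\wX A^p-\widetilde{Z}\tr(A^m)\cdot\widetilde{Z}A^p
=-\frac{mp}{N^2}A^{m+p}+\frac{mp}{N^2}\tr(A^m)A^p,
\]
yielding all three identities directly from the known $\U(N)$ formulas without re-counting triples or reindexing. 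This buys a uniform one-step reduction to $\U(N)$, which is closer in spirit to how the paper handles $\SU(N)$ throughout Section~6.
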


\begin{proof}
	The proof of these results is essentially identical to the proofs of the corresponding derivative formulas for $\U(N)$ in \cite[Theorem 3.3]{DHK:U(N)}; one need only to replace the magic formulas for $\U(N)$ by the magic formulas for $\SU(N)$ in the proof, wherever applicable. 
\end{proof}

The derivative formulas of Proposition \ref{prop:derivativeSUN} comprise the key ingredient for the intertwining formulas for $\Delta_{\SU(N)}$ and $\mathbf{B}_{s,\tau}^{\SU(N)}$ (Theorems \ref{thm:intertwine1} and \ref{thm:SBintertwine}). The proofs of these results are entirely analogous to the corresponding results for $\U(N)$ (cf. \cite[Theorems 1.8, 1.9]{DHK:U(N)}). The only change required in the proof is to replace the magic formulas and derivative formulas for $\U(N)$ by the corresponding formulas for $\SU(N)$ from Propositions \ref{prop:magicSUN} and \ref{prop:derivativeSUN}. By keeping track of these changes, we see that for $P\in\Cuuv$, the intertwining formula for $\Delta_{\SU(N)}$ is $$\Delta_{\SU(N)}P_N=[\mathcal{D}_N^{(2)}P]_N=\left[\left(\mathcal{L}_0-\frac{1}{N^2}(2\mathcal{K}_1^-+\mathcal{K}_2^--\mathcal{J})\right)P\right]_N.$$ The only difference between this and \eqref{eq:intertwineU(N)}, the intertwining formula for $\Delta_{\U(N)}$, is that for $\SU(N)$, the $1/N^2$ term on the right hand side contains the additional operator $\mathcal{J}$. This is a consequence of the close relationship between the magic formulas for $\U(N)$ and $\SU(N)$. \label{pf:thm:intertwine1SUN} \label{pf:thm:SBintertwineSUN}

Finally, our main result regarding the free Segal-Bargmann transform for $\SU(N)$, Theorem \ref{thm:freeSBlimit}, is proven in the same way as the corresponding result for $\U(N)$ (see \cite[Theorem 1.11]{DHK:U(N)}); again, the only change necessary is the substitution of the magic and derivative formulas for $\SU(N)$ in place of those for $\U(N)$ wherever required. \label{pf:thm:freeSBlimitSUN}

\subsection*{Acknowledgments}
The author thanks their PhD advisor Todd Kemp for suggesting the topic of the present paper, as well as his invaluable advice and extensive comments in completing this work. In addition, the author thanks the referee, whose suggestions greatly improved the presentation of the results.

\bibliographystyle{acm}
\bibliography{citations}

\end{document}